\documentclass[11pt]{amsart}
\usepackage[english]{babel}
\usepackage{amsfonts}
\usepackage[utf8]{inputenc}
\usepackage{amsthm}
\usepackage{ mathrsfs }
\usepackage{ amsmath }
\usepackage{mathrsfs}
\usepackage{enumerate}
\usepackage{cite}
\usepackage{bbm}
\usepackage{graphicx}
\usepackage{frontespizio}
\usepackage{xcolor}
\usepackage[makeroom]{cancel}
\usepackage[normalem]{ulem}
\usepackage{ amssymb }
\usepackage{hyperref}
\usepackage{mathtools}
\usepackage{enumitem}
\usepackage[toc,page]{appendix}
\usepackage{ esint }
\usepackage[margin=2.5cm]{geometry}

\newcommand{\eps}{\varepsilon}
\newcommand{\BV}{{\sf BV}}
\newcommand{\rr}{\mathbb{R}}
\newcommand{\nn}{\mathbb{N}}

\newcommand{\nchi}{{\raise.3ex\hbox{$\chi$}}}

\newcommand{\sfd}{{\sf d}}
\newcommand{\Lip}{{\rm Lip}}

\renewcommand{\phi}{\varphi}
\newcommand{\restr}[1]{\lower3pt\hbox{$|_{#1}$}}

\newcommand{\X}{{\rm X}}

\newcommand{\fr}{\penalty-20\null\hfill$\blacksquare$} 
\definecolor{mygray}{gray}{0.9}
\newcommand{\diam}{\text{diam}}

\newcommand{\la}{\langle}
\newcommand{\ra}{\rangle}
\renewcommand{\div}{{\rm div}}
\newcommand{\mea}{\mathfrak{m}}
\newcommand{\mm}{\mathfrak{m}}
\newcommand{\Per}{{\rm{Per}}}
\newcommand{\LIP}{\mathsf{LIP}}

\newcommand{\test}{{\sf{Test}}}
\newcommand{\bd}{{\mbox{\boldmath$\Delta$}}}

\renewcommand{\Cap}{{\rm Cap}}

\renewcommand{\d}{{\mathrm d}}
\newcommand{\loc}{\mathsf{loc}}
\newcommand{\W}{\mathit{W}^{1,2}}

\newcommand{\supp}{\mathop{\rm supp}\nolimits} 

\newcommand{\D}{{\sf D}}
 
\newcommand{\e}{{\rm{e}}}    
\newcommand{\lip}{{\rm lip}}

\DeclareMathOperator*{\esssup}{ess\,sup}
\DeclareMathOperator*{\essinf}{ess\,inf}

\renewcommand{\c}{{\mathrm c}}
\newcommand{\Xdm}{(\X,\sfd,\mm)}
\newcommand{\RCD}{\mathrm{RCD}}
\newcommand{\CD}{\mathrm{CD}}
\newcommand{\cd}{\mathrm{CD}}
\newcommand{\R}{\mathbb{R}}

\newcommand{\testv}{{\rm TestV}}

\newcommand{\rcd}{\mathrm{RCD}}

\newcommand{\wil}{{\mathcal{W}}}
\newcommand{\Dwil}{{\sf D}({\mathcal{W}})}

\newcommand{\sgn}{{\sf sgn}}

\newcommand{\dom}{{\sf D}}
\newcommand{\pert}{\Per(\{u<t\})}

\renewcommand{\Cap}{{\rm Cap}}

\newcommand{\cal}{\mathcal}
\renewcommand{\limsup}{\varlimsup}
\renewcommand{\liminf}{\varliminf}
\newcommand{\avr}{{\sf AVR}}

\theoremstyle{plain}
\newtheorem{theorem}{Theorem}[section]
\newtheorem{lemma}[theorem]{Lemma}
\newtheorem{prop}[theorem]{Proposition}

\newtheorem{cor}[theorem]{Corollary}

\theoremstyle{definition}
\newtheorem{definition}[theorem]{Definition}
\newtheorem{remark}[theorem]{Remark}

\newcommand{\TD}{{\rm TD}}

\newcommand{\Int}{\mathcal I_{TV}}
\newcommand{\Hu}{\overrightarrow{H}}

\numberwithin{equation}{section}

\begin{document}
\begin{abstract}
The goal of this work is to introduce a notion of mean curvature for level sets of functions in non-smooth spaces with Ricci curvature bounded below, and to prove that it satisfies sharp geometric inequalities. More precisely, we define a suitable Willmore functional $\mathcal{W}$ on Sobolev functions, whose domain of finiteness is dense in $L^p$ for any $1\le p<\infty$. For any function with finite Willmore energy, we show that almost all of its level sets admit a mean curvature vector satisfying the natural integration by parts formula with respect to the tangential divergence. As a main application, we show that in $\RCD(0,N)$ spaces with Euclidean volume growth, almost every level set of the electrostatic potential possesses a mean curvature vector in the above sense. Furthermore, we prove that this vector satisfies the same sharp Willmore inequality as in the smooth setting, alongside rigidity and almost-rigidity statements. Finally, as a  technical tool, we generalize the sharp isocapacitary inequality to the non-smooth setting.

\end{abstract}

	\title[Mean curvature in metric spaces]{Mean curvature and sharp Willmore inequalities  \\ in  metric spaces}

	\author[N.~Gigli]{Nicola Gigli}
	\address[N.~Gigli]{Scuola Internazionale Superiore di Studi Avanzati (SISSA),  Via Bonomea 265,  34136, Trieste, Italy}\email{ngigli@sissa.it}
	
	\author[I.~Y.~Violo]{Ivan Yuri Violo}
	\address[I.~Y.~Violo]{Universit\`a di Pisa, Dipartimento di Matematica, Largo Bruno Pontecorvo, 5, 56127 Pisa, Italy}
	\email{ivanyuri.violo@dm.unipi.it }
	
\maketitle
    
	\tableofcontents

\section{Introduction}

The  goal of this paper is to understand whether a meaningful notion of mean curvature exists within the framework of metric measure spaces satisfying the curvature-dimension condition. In recent years, a robust theory of first- and second-order calculus has been successfully developed on these spaces (see e.g.\ \cite{GigliPasqualetto20book,Gigli14,Bjorn-Bjorn11,Heinonen_Koskela_Shanmugalingam_Tyson_2015} and references therein) where objects are typically defined almost everywhere with respect to the reference measure. Regarding codimension-one objects, there is also a well-established and satisfactory theory for perimeter and isoperimetric sets  (see e.g.\ \cite{amb00,Miranda03,brue2023rectifiability,APPV23}). 

However, understanding second-order objects localized on codimension-one sets, such as mean curvature, remains a significant challenge. A robust notion of mean curvature is needed in connection with geometric measure theory, minimal surfaces, isoperimetric sets, and functional inequalities. 
In the existing literature, there have already appeared notions of weak bounds for the mean curvature in non-smooth spaces with Ricci curvature bounded below relying primarily on 1D localization techniques \cite{ketterer2020heintzekarcher,cavalletti2024optimal} or weak Laplacian bounds \cite{mondino2025weak}. Our goal is to provide a pointwise definition of the mean curvature vector that is more directly linked to the framework of second-order calculus.

Our construction does not provide a notion of mean curvature for arbitrary sets. Instead, it applies to almost every level set of a rich class of sufficiently regular functions, which effectively fiber the space. 

Our strategy to bypass the lack of a priori smoothness of these level sets relies on a duality argument. We first  introduce, for a class of `smooth enough' vector fields,  a weak notion of tangential divergence with respect to the level sets of a given function $u \in \W(\X)$. Specifically, we define the tangential divergence as
\begin{equation}
    \TD(v)\coloneqq \div (v)-\nabla v \left(\frac{\d u}{|\d u|},\frac{\d u}{|\d u|}\right),
\end{equation}
which is well-defined $\mm$-a.e.\ on the regular set $\{|\d u|>0\}$, see Definition \ref{def:tandiv}. Note that this is precisely the usual tangential divergence in the smooth setting and corresponds to removing from the divergence the normal component of the covariant derivative. 

With this tool at hand, we can introduce the Willmore functional, denoted by $\wil(u)$. Formally, this functional represents half the integrated $L^2$-norm of the mean curvature across all level sets of $u$. Since we do not yet have a notion of mean curvature, we define $\wil(u)$ rigorously  by formal integration by parts against the tangential divergence:
\begin{equation}
    \wil(u)\coloneqq \sup_{v \in \testv(\X)} \int_{\X} \left(\TD(v)-\frac{|v|^2}{2}\right) |\d u| \d\mm.
\end{equation}
To see formally how this formula relates to the norm of the mean curvature on level sets  see the beginning of Section \ref{sec:willmore functional}. 
Related  operators, involving the integration of the $L^p$-norm of the mean curvature over the level sets of smooth functions, have been previously studied in the Euclidean setting particularly in  image processing; see, e.g. \cite{MasnouNardi+2013+433+482,MR3136594,AmbrosioMasnou2003}.

Assuming the finiteness of $\wil$, we can exploit the Riesz representation theorem to extract an $L^2$-integrable vector field that acts precisely as the mean curvature. This leads to our main existence result:
\begin{theorem}\label{thm:main H intro}
Let $\Xdm$ be an $\RCD(K,\infty)$ space. For all $u \in \W(\X)$ such that $\wil(u)<\infty$ there exists a $|\d u|\mm$-a.e.\ unique vector field $\Hu(u) \in L^0(T\X)$ with $|\Hu(u)|\in L^2(|\d u|\mm)$, called mean curvature vector, such that for all $v\in \testv(\X)$
\begin{equation}\label{eq:mean curv eq on level sets intro}
\int \la \Hu(u),v\ra \,\d \pert =\int \TD(v)\,\d \pert, \quad \text{for a.e.\ $t \in \rr$.}
\end{equation}
Moreover
\begin{equation}\label{eq:wil int fomrula intro}
    \wil(u)=\frac12\int_\rr \int |\Hu(u)|^2\d \pert\, \d t.
\end{equation}
   Finally, if $u\in \dom(\Delta)\cap \LIP(\X)$, then $\wil(u)<\infty$ if and only if 
 \begin{equation}\label{eq:H formula intro}
     H(u)\coloneqq \left(\frac{\Delta u}{|\d u|} -\frac{\la \nabla |\d u|,\nabla u \ra }{|\d u|^2}\right)\in L^2(|\d u|\mm),
 \end{equation}
in which case $\Hu (u)=H(u)\nchi_{|\d u|>0}\frac{\d u}{|\d u|}$.
\end{theorem}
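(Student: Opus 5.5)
The plan is to obtain $\Hu(u)$ by Riesz representation in the Hilbert space $L^2(T\X;|\d u|\mm)$ (the completion of vector fields with respect to $\|v\|^2=\int|v|^2|\d u|\d\mm$), and then to disintegrate via the coarea formula $|\d u|\mm=\int_\rr \Per(\{u<t\})\,\d t$.

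\textbf{Step 1: a bounded linear functional.} Consider the linear map $\ell(v)\coloneqq\int \TD(v)|\d u|\,\d\mm$ on $\testv(\X)$. Replacing $v$ by $\lambda v$ in the definition of $\wil$ gives $\lambda\ell(v)-\frac{\lambda^2}{2}\|v\|^2\le\wil(u)$ for every $\lambda\in\rr$. Optimizing in $\lambda$ yields $|\ell(v)|\le \sqrt{2\wil(u)}\,\|v\|$. Since $\testv(\X)$ is dense in $L^2(T\X;|\d u|\mm)$, which follows from the density of test vector fields in $L^2$ with respect to measures absolutely continuous w.r.t. $\mm$, Riesz representation gives a unique $\Hu(u)$ in this space with $\ell(v)=\int\la\Hu(u),v\ra|\d u|\,\d\mm$. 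Completing the square, $\ell(v)-\frac12\|v\|^2=\frac12\|\Hu\|^2-\frac12\|v-\Hu\|^2$, and density then gives $\wil(u)=\frac12\int|\Hu(u)|^2|\d u|\,\d\mm$. Applying coarea turns this into \eqref{eq:wil int fomrula intro}.

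\textbf{Step 2: localization to level sets.} The identity above is only integrated over $t$. To get \eqref{eq:mean curv eq on level sets intro} for a.e.\ $t$, I would test with $g(u)v$ for $g\in C^\infty_c(\rr)$. I expect this to be the main technical obstacle, because one must check that $\TD(g(u)v)=g(u)\TD(v)$. Formally $\div(g(u)v)=g(u)\div v+g'(u)\la\nabla u,v\ra$ and $\nabla(g(u)v)(n,n)=g(u)\nabla v(n,n)+g'(u)\la\nabla u,n\ra\la v,n\ra$ with $n=\nabla u/|\d u|$, so the extra terms cancel. The difficulty is that $g(u)v$ need not be a test vector field. Either the sup defining $\wil$, and hence the representation, must be extended to a larger class such as $H^{1,2}_H$ fields with $L^\infty$ divergence, using the calculus rules from the preliminaries, or one approximates $g(u)$ by test functions and passes to the limit. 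Granting this, coarea gives $\int g(t)\bigl[\int(\TD(v)-\la\Hu,v\ra)\,\d\Per(\{u<t\})\bigr]\d t=0$ for all $g$, so the bracket vanishes for a.e.\ $t$. The exceptional set depends on $v$, but a countable dense family of test fields removes this dependence. This uses continuity in $v$ of both sides on a.e.\ level set, which follows from $|\Hu|\in L^2(\Per(\{u<t\}))$ for a.e.\ $t$ and from choosing the family dense in a suitable $W^{1,2}$-type norm.

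\textbf{Step 3: the smooth-type formula.} For $u\in\dom(\Delta)\cap\LIP(\X)$, I would compute $\int\TD(v)|\d u|$ by integration by parts. First, $\int \div(v)|\d u|=-\int\la v,\nabla|\d u|\ra$, using that $|\d u|\in W^{1,2}_{loc}$ by the improved Bochner/Hessian estimates on $\RCD$ spaces. Second, $\nabla v(\nabla u,\nabla u)/|\d u|$ is obtained from $\la\nabla\la v,\nabla u\ra,\nabla u\ra-\mathrm{Hess}\,u(v,\nabla u)$; integrating by parts the first term against $|\d u|^{-1}$ produces $\Delta u$. Since $\mathrm{Hess}\,u(v,\nabla u)=\la\nabla|\d u|,v\ra|\d u|$, the tangential parts of $\nabla|\d u|$ cancel. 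The outcome is $\ell(v)=\int H(u)\la \nabla u/|\d u|, v\ra|\d u|$, where the singular set $\{|\d u|=0\}$ and the factor $|\d u|^{-1}$ are handled by truncations $\max(|\d u|,\eps)$. If $H(u)\in L^2(|\d u|\mm)$, Cauchy--Schwarz bounds $\wil$, and uniqueness in Riesz gives $\Hu(u)=H(u)\frac{\d u}{|\d u|}$. Conversely, if $\wil(u)<\infty$, then the $L^1_{loc}$ density $H(u)\frac{\d u}{|\d u|}$ coincides with the $L^2$ field $\Hu$, so $H(u)\in L^2(|\d u|\mm)$.
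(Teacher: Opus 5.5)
Your outline is the paper's proof. The scaling bound and Riesz representation are Lemma~\ref{lem:int tan div bounded} and Proposition~\ref{prop:H first existence}. Step~2 is the cutoff identity of Proposition~\ref{prop:cutoff trick}, plus approximation of $g(u)$ by test functions, plus coarea. Step~3 is Lemma~\ref{lem:int by parts} with an $\varepsilon$-regularization.

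The one ingredient you do not supply is the density of $\testv(\X)$ in $L^2(T\X;|\d u|\mm)$. Your justification (density for measures absolutely continuous w.r.t.\ $\mm$) only restates the claim for a general weight, and this is not a standard fact. Since $|\d u|$ is only in $L^2(\mm)$, convergence $v_n\to v$ in $L^2(T\X)$ gives no control of $\int|v_n-v|^2|\d u|\,\d\mm$. Moreover, $\testv(\X)$ is not stable under pointwise truncation, so you cannot simply cut off the approximants.

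The paper fills this with Lemma~\ref{lem:approximation with bounded testv}: every $v$ with $|v|\le 1$ is an $L^2(T\X)$-limit of test fields with $|v_n|\le 1+\delta$. The construction uses heat-flow mollified and normalized characteristic functions, Lipschitz extensions, and Bakry--\'Emery. After that, $\int|v_n-v|^2|\d u|\,\d\mm\le\||v_n-v|\|_{L^\infty}\||v_n-v|\|_{L^2}\||\d u|\|_{L^2}\to0$. Proposition~\ref{prop:hilbert} also proves completeness of the space of actual fields in $L^0(T\X)$. You need this so that the Riesz vector is a vector field rather than an element of an abstract completion.

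Without density, Riesz in the closure of $\testv(\X)$ still gives existence, the level-set identity, and the formula for $\wil(u)$. It does not give the asserted $|\d u|\mm$-a.e.\ uniqueness. Another way to close the gap is a separation argument. Suppose $|w|\in L^2(|\d u|\mm)$ is orthogonal to $\testv(\X)$. Test with $f\nabla g$, where $g\in\test(\X)$ and $f$ is the mollified heat flow of $\psi\in L^2\cap L^\infty(\mm)$. Using $\int|f-\psi|^2|\d u|\,\d\mm\le\||\d u|\|_{L^2}\|f-\psi\|_{L^4}^2$, you get $\la w,\nabla g\ra|\d u|=0$ a.e. Hence $w=0$ $|\d u|\mm$-a.e., because gradients of test functions generate $L^0(T\X)$.

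In Step~2, the limit you grant is routine, and the paper does it as follows. Take $f_n\in\test(\X)$ uniformly bounded with $f_n\to g(u)$ in $\W(\X)$. Then $\TD(f_nv)=f_n\TD(v)+\la\nabla f_n,v-\la v,e_u\ra e_u\ra$. Since $\nabla g(u)$ has no tangential component, the last term satisfies $\int|\la\nabla(f_n-g(u)),v-\la v,e_u\ra e_u\ra|\,|\d u|\,\d\mm\le 2\||v|\|_{L^\infty}\||\d(f_n-g(u))|\|_{L^2}\||\d u|\|_{L^2}\to0$. The remaining terms pass to the limit by dominated convergence. You should take $g(0)=0$, so that $g(u)\in\W(\X)$ when $\mm(\X)=\infty$; this costs nothing for an a.e.-$t$ conclusion.

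Finally, the statement only asks, for each $v$, for a null set of $t$ that may depend on $v$. So your upgrade via a countable dense family is unnecessary, and your justification for it does not work. Convergence $v_k\to v$ in a $W^{1,2}$-type norm controls $\int|\TD(v_k-v)|\,\d\pert$ only after integrating in $t$. That gives convergence for a.e.\ $t$ along a $v$-dependent subsequence, not on a fixed level set.
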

The notion of mean curvature given in the above statement is meaningful because it satisfies the expected integration by parts formula \eqref{eq:mean curv eq on level sets intro} with respect to the tangential divergence directly on the level sets. In particular it coincides with the classical notion of distributional mean curvature in the smooth setting for regular level sets, whenever it is also  $L^2$-integrable on that level set.

Identity \eqref{eq:wil int fomrula intro}, on the other hand, says that  a posteriori the Willmore functional indeed coincides with half the total $L^2$-norm of the mean curvature on the level sets of $u.$ 

Finally, the last part of the statement provides the link with the second order calculus: if the function $u$ is smooth enough we deduce that  our notion of mean curvature coincides with the usual formula in the smooth setting. Indeed, note that the right-hand side of \eqref{eq:H formula intro} corresponds formally to $\div(|\nabla u|^{-1}\nabla u).$   Recall that $\dom(\Delta)$ denotes the space of $\W(\X)$-functions with Laplacian in $L^2(\mm).$

\begin{remark}[Existence of many functions having finite Willmore functional]
    It is a priori not obvious whether there exist non-trivial functions $u\in W^{1,2}(\X)$ with $\wil(u)<\infty$, not even in the subclass $u\in \dom(\Delta)\cap \LIP(\X)$.  Crucially, we will show that in $\RCD(K,N)$ spaces with $N<\infty$ such functions constitute a rich class. In particular, we prove that the domain of finiteness of the Willmore functional $\wil$ is dense in $L^p(\mm)$ for all $p\in[1,\infty)$; see Theorem \ref{thm:domain dense} for the precise statement. \fr
\end{remark}

\begin{remark}[Comparison with previous notions of mean curvature]

Several notions of mean curvature in non-smooth metric spaces have already appeared in the literature. While these approaches share some of our motivation, they differ substantially from ours, with complementary strengths and limitations.

For instance, \cite{ketterer2020heintzekarcher} and \cite{cavalletti2024optimal} (see also \cite[Appendix B]{kettererKasue}) introduced lower and upper bounds for the mean curvature of a set using 1D-localization techniques in non-smooth spaces with curvature bounds. In the $\RCD$ setting, \cite{mondino2025weak} showed that locally perimeter-minimizing sets have vanishing mean curvature in a weak sense by means of bounds on the Laplacian of the distance function. See also \cite{PanuMalyShanSpeight} for weak mean curvature bounds in PI spaces. 

The defining feature of the present work is that we provide an explicit definition of the mean curvature vector itself, whereas previous approaches mainly establish scalar upper and lower bounds. (The only partial exception is \cite{ketterer2020heintzekarcher}, where a pointwise notion is introduced, but only for the inner and outer mean curvatures.)

The main limitation of our approach is that it applies only to almost every level set of a rich class of sufficiently regular functions, rather than to an arbitrary prescribed set. By contrast, the above-mentioned works apply directly to individual sets, although typically under additional assumptions such as inner or outer ball conditions, perimeter minimization, or regularity of the surface measure.
 
 Another distinctive feature of our definition is its direct connection with second-order calculus. Indeed, the mean curvature vector is characterized by the integration-by-parts identity with respect to the tangential divergence \eqref{eq:mean curv eq on level sets intro} and, whenever the underlying function is smooth enough, it admits the explicit representation \eqref{eq:H formula intro}. This analytic structure is largely absent from previous distributional approaches and is the key ingredient allowing us to derive the sharp geometric inequalities of Theorem \ref{thm:sharp willmore}, which would be difficult to obtain using previous frameworks (compare with the different inequalities established in \cite{ketterer2020heintzekarcher} and \cite{kettererKasue}).
\fr
 
\end{remark}

Since Theorem \ref{thm:main H intro} is formulated for globally defined functions, we also prove a local version of it for functions defined only in an open set $\Omega \subset \X$, with the only requirement of having level sets relatively compact in $\Omega$. The statement is essentially identical and can be obtained with the same argument localized, see  Proposition \ref{prop:main H local}.

Thanks to the local version of the existence result for the mean curvature vector, we are able  to show that \textit{harmonic functions} with relatively compact level sets admit  an $L^2$-mean curvature  for almost every level set in the above sense, see Corollary \ref{cor:H formula for harmonic functions} for the precise version. This fact not only provides an important and explicit large class of examples of functions to which our theory applies in a non-trivial way, but  also has important geometric consequences as we shall see below.

A particularly relevant class of examples satisfying these assumptions is provided by \textit{electrostatic potentials}. These are  harmonic functions defined on some outer domain $\X\setminus K$, equal to one in $K$ and  vanishing at infinity,  see Definition \ref{def:electro} for the precise notion in our setting.

Electrostatic potentials  play an important role in recent geometric analysis because their level sets naturally foliate the exterior domain, providing a powerful alternative to geometric flows. It was discovered in \cite{Co12}  that on Riemannian manifolds with non-negative Ricci curvature there are important monotone quantities along the level sets of the Green function of the Laplacian. Later in \cite{AgostinianiFogagnoloMazzieri20} this was generalized to the case of electrostatic potentials. More importantly for us, in \cite{AgostinianiFogagnoloMazzieri20} these monotonicity properties were used to obtain sharp Willmore-type inequalities for the mean curvature of hypersurfaces (see also the previous \cite{agostiniani2020monotonicity}  in the Euclidean setting). For more recent advances and generalizations of this approach see \cite{benatti2024minkowski,benatti2024fine,FOGAGNOLOpina} and references therein.

These same monotonicity formulas for the electrostatic potential were generalized to the setting of metric measure spaces with non-negative Ricci curvature by the authors in \cite{GigliViolo23}. Combining these results with the notion of mean curvature developed in this note and described above, we are finally able to deduce sharp Willmore inequalities in this more general setting.

\begin{theorem}[Sharp Willmore-type inequality]\label{thm:sharp willmore}
Let $\X$ be an $\rcd(0,N)$ space, $N>2$, with Euclidean volume growth, and let $u$ be an electrostatic potential for a compact set $K$. Let $\Hu(u)$ be the mean curvature given in Proposition \ref{prop:main H local} and for all $\beta \in \big[\frac{N-2}{N-1},1\big]$ define the Willmore energy 
\[
(0,1)\ni t\mapsto W_{\beta+1}(t)\coloneqq \int |\Hu (u)|^{1+\beta} \d \pert.
\]
Then $W_{\beta+1}(t)$  admits a (maximal) lower semicontinuous representative in $(0,1)$. Moreover, for this choice of representative it holds
\begin{equation}\label{eq:willmore ineq intro}
\frac{W_{\beta+1}(t)}{(N-1)^{\beta+1}} \ge(\avr(\X)\sigma_N)^{\frac{\beta}{N-2}}t^{\frac{\beta}{N-2}-1}\left(\frac{{\rm Cap}(K)}{N-2}\right)^{1-\frac{\beta}{N-2}}, \quad \text{for all $t \in (0,1).$}
\end{equation}
Finally, if equality holds for some $t_0\in(0,1)$ and $\beta>\frac{N-2}{N-1}$, then $\{u<t_0\}$ is locally isometric to $Y\setminus B_R(O_Y)$, where $Y$ is an $N$-Euclidean cone with tip $O_Y$ over an $\RCD(N-2,N-1)$ space.
\end{theorem}
Note that the first part of the result gives a mild but non-trivial extra regularity  for the Willmore energy along the level sets. Indeed, even though the vector $\Hu(u)$ is defined a priori only $\mm$-a.e.\ and thus $W_{\beta+1}(t)$ is well defined only up to a.e.-equivalence classes, $W_{\beta+1}$ admits a natural pointwise defined value.

An almost-rigidity result for the Willmore inequality is also established, which we believe to be new even in the smooth setting, stating that if almost equality is achieved in \eqref{eq:willmore ineq intro}, then the space is close to a truncated cone in the pointed measure Gromov-Hausdorff (pmGH) sense, see Theorem \ref{thm:almost rigidity} for the rigorous statement.

As an intermediate tool for inequality \eqref{eq:willmore ineq intro} we will also obtain the sharp isocapacitary inequality for $\RCD(0,N)$ spaces with Euclidean volume growth, stating that
		\begin{equation}\label{eq:isocap intro}
			\Cap(E)\ge(N-2)N \omega_N^{\frac2N} \avr(\X)^{\frac2N} \mea(E)^\frac{{N-2}}{N},
            \end{equation}
            for every bounded Borel set $E\subset \X.$ This inequality generalizes the one  for $p=2$ in \cite[Section 4.1]{benatti2024minkowski} for smooth manifolds and  we believe it to be of independent interest.  We also prove that equality in \eqref{eq:isocap intro} is achieved if and only if  $E$ is $\mm$-a.e.\ equal to a ball and  $\X$ is a cone, see Theorem \ref{thm:isocap} for the precise statement.

Alongside the geometric inequalities discussed above, a further crucial contribution of this paper concerns the stability of the Willmore functional. Specifically, we establish its lower semicontinuity under pmGH-convergence.
\begin{theorem}[$\Gamma-\liminf$ of the Willmore functional]\label{thm:lsc}
Let $\X_n$, $n \in \nn\cup\{\infty\}$ be a pmGH-converging sequence of $\RCD(K,\infty)$ spaces, let $\wil_n$ denote the Willmore functional in $\X_n$. Then, for any $\W$-converging sequence  $u_n\to u_\infty$ it holds
\[
\wil_\infty(u_\infty)\le \liminf_n \wil_n(u_n).
\]
\end{theorem}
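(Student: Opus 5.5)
The plan is to use the dual formulation of $\wil$ directly. Since $\wil$ is a supremum of functionals that are affine in $|\d u|\mm$ and in $\TD(v)$, it is enough to show lower semicontinuity of each single term. Fix $v_\infty\in\testv(\X_\infty)$. We need to approximate it by $v_n\in\testv(\X_n)$ in such a way that
\[
\int\Big(\TD_n(v_n)-\tfrac{|v_n|^2}{2}\Big)|\d u_n|\,\d\mm_n\;\to\;\int\Big(\TD_\infty(v_\infty)-\tfrac{|v_\infty|^2}{2}\Big)|\d u_\infty|\,\d\mm_\infty .
\]
Granting this, $\wil_n(u_n)\ge$ the $n$-th left side, hence $\liminf_n\wil_n(u_n)\ge$ the right side. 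Taking the supremum over $v_\infty$ concludes.

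\textbf{Step 1: recovery vector fields.} We may assume $v_\infty=\sum_i g_i\nabla f_i$ with $g_i,f_i$ test functions; the general case follows by linearity and density in the topology relevant for $\TD$. Use the standard heat-flow construction on converging $\RCD$ spaces: take $f_i^n\to f_i$ and $g_i^n\to g_i$ as test functions, with $f^n_i=h_{t}\tilde f^n_i$ for good cut-off approximations. Their gradients then converge strongly in $L^2$ and their Hessians converge (at least weakly in $L^2$). Set $v_n=\sum_i g_i^n\nabla f_i^n$. This gives $v_n\to v_\infty$ strongly in $L^2$, with uniform $L^\infty$ bounds. It also gives $\div v_n\to\div v_\infty$ strongly, and $\nabla v_n\to\nabla v_\infty$ in $L^2$ weakly. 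I expect strongly is attainable for heat-regularized test functions, using the known strong $W^{2,2}$-type convergence of $h_t$-regularizations under pmGH convergence.

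\textbf{Step 2: passing to the limit.} Write $\TD(v)|\d u|=\div(v)|\d u|-\nabla v(\nabla u,\nabla u)/|\d u|$, with the convention that this is $0$ where $|\d u|=0$. Strong $W^{1,2}$ convergence gives $|\d u_n|\to|\d u_\infty|$ strongly in $L^2$. Hence:
\begin{itemize}
\item[] $\int \div(v_n)|\d u_n|$ converges to $\int\div(v_\infty)|\d u_\infty|$ by strong--strong convergence;
\item[] $\int|v_n|^2|\d u_n|$ converges because $|v_n|^2$ is bounded in $L^\infty$ and converges strongly.
\end{itemize}

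\textbf{The main obstacle} is the normal term $\int\nabla v_n(\nabla u_n,\nabla u_n)/|\d u_n|\,\d\mm_n$. The integrand is $\nabla v_n$ tested against $\nabla u_n\otimes\nabla u_n/|\nabla u_n|$, which is bounded in $L^2$ by $|\d u_n|$. I would show that these tensors converge strongly in $L^2$, in the sense of tensor fields over converging spaces, and then pair them with $\nabla v_n\to\nabla v_\infty$ weakly. Strong convergence of $\nabla u_n\otimes\nabla u_n$ in $L^1$ follows from strong $L^2$ convergence of $\nabla u_n$. The division by $|\nabla u_n|$ is the delicate point. The plan is to handle it by truncation: write $\nabla u\otimes\nabla u/|\nabla u| = \phi_\eps(|\nabla u|)\,\nabla u\otimes\nabla u/|\nabla u| + R_\eps$, with $\phi_\eps$ continuous, vanishing near $0$ and equal to $1$ on $[\eps,\infty)$. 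Then $|R_\eps|\le 2\eps$, and we use $L^2$-boundedness of $\nabla v_n$ on bounded sets; the non-compact support is controlled by the compact support of $v_n$. The first term is a continuous function of $\nabla u_n$ of linear growth, so it converges strongly in $L^2$. Letting $n\to\infty$ and then $\eps\to0$ gives the convergence, up to an error $O(\eps)$, which yields the claim.

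If only weak convergence of $\nabla v_n$ is available, this still suffices, since the partner factor converges strongly. So the main technical burden is Step 1 together with the strong convergence of the truncated normal tensor.
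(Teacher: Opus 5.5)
Your overall strategy is the paper's. You fix $v_\infty\in\testv(\X_\infty)$ and approximate it by test sequences built with the mollified heat flow. You use $|v_n|\to|v_\infty|$ and $\div(v_n)\to\div(v_\infty)$ strongly and $\nabla v_n\rightharpoonup\nabla v_\infty$ weakly, and pair the latter with a strongly converging regularization of $\nabla u_n\otimes\nabla u_n/|\d u_n|$.

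Your ``density'' step should be read as the paper's lemma on approximation by test vectors. One cannot in general make the $\infty$-term of a test sequence equal to $v_\infty$. Instead one approximates $v_\infty$ by $\infty$-terms $z^k_\infty$ in $\W_C(T\X_\infty)$, with $\div z^k_\infty\to\div v_\infty$ in $L^2$ and $|z^k_\infty|$ uniformly bounded. This is exactly the topology in which $v\mapsto\int(\TD(v)-|v|^2/2)|\d u_\infty|\,\d\mm_\infty$ is continuous.

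The only real difference is how $1/|\d u|$ is regularized. The paper uses $(|\d u_n|+\delta)^{-1}$. This is not in $L^2$ on infinite-measure spaces, so it forces a spatial cut-off $\phi\in\LIP_{bs}(Y)$, which is removed at the end using $(1-\phi)|\d u_n|\to(1-\phi)|\d u_\infty|$ in $L^2$.

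The step that fails as written is your control of the remainder. Test vector fields, and heat-flow regularizations in particular, are \emph{not} compactly supported, and $|\nabla v_n|$ is only bounded in $L^2$. So the bound $|R_\eps|\le 2\eps$ gives no control of $\int|\nabla v_n||R_\eps|\,\d\mm_n$ over a set of possibly infinite measure. Take $0\le\phi_\eps\le 1$ and use the sharper bound $|R^n_\eps|\le|\d u_n|\nchi_{\{0<|\d u_n|<\eps\}}\le\psi_\eps(|\d u_n|)$, with $\psi_\eps(t)\coloneqq\min\{t,(2\eps-t)^+\}$. Then
\[
\Big|\int\la\nabla v_n,R^n_\eps\ra\,\d\mm_n\Big|\le\sup_k\||\nabla v_k|\|_{L^2(\mm_k)}\,\|\psi_\eps(|\d u_n|)\|_{L^2(\mm_n)} .
\]
Since $\psi_\eps$ is $1$-Lipschitz with $\psi_\eps(0)=0$, we have $\psi_\eps(|\d u_n|)\to\psi_\eps(|\d u_\infty|)$ strongly in $L^2$. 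Hence the $\limsup_n$ of the right-hand side is at most $C\||\d u_\infty|\nchi_{\{0<|\d u_\infty|<2\eps\}}\|_{L^2(\mm_\infty)}$, which tends to $0$ as $\eps\to0$. Alternatively, localize with $\phi\in\LIP_{bs}(Y)$ as the paper does.

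With this repair your route is slightly leaner than the paper's. Take $\phi_\eps$ Lipschitz; then $t\mapsto\phi_\eps(t)/t$ is bounded, Lipschitz and vanishes at $0$. By items i), ii), iv), vi) of the paper's proposition on convergence of tensors, $\nabla u_n\otimes\big(\tfrac{\phi_\eps(|\d u_n|)}{|\d u_n|}\nabla u_n\big)$ converges strongly in $L^2$ on the whole space. So no spatial cut-off is needed at all.
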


This paper is organized as follows. Section \ref{sec:pre} recalls the necessary preliminaries. Section \ref{sec:tandiv and wil} introduces the Willmore functional and the mean curvature vector. Section \ref{sec:lsc} establishes the lower semicontinuity of the Willmore functional, while Section \ref{sec:reg along level sets} studies the regularity of the Willmore energy on level sets of harmonic functions. Finally, Section \ref{sec:isocap will} is devoted to the proofs of the sharp isocapacitary and Willmore inequalities, including rigidity and almost-rigidity results.

	\section{Preliminaries}\label{sec:pre}
	
Throughout this paper, $\Xdm$ will denote a metric measure space. We will denote by $\LIP_{bs}(\X) $ the space of Lipschitz functions in $\X$ with bounded support. For a given open set $\Omega\subset \X$ we denote by $\LIP(\Omega), \LIP_\loc(\Omega)$ and $\LIP_c(\Omega)$ respectively the spaces of Lipschitz functions in $\Omega$, locally Lipschitz functions in $\Omega$ and Lipschitz functions with compact support in $\Omega$.  We assume the reader to be familiar with the foundational theory of calculus on metric measure spaces. In particular, we take for granted the definition and properties of the Sobolev spaces $\W(\Omega) $, $\W_\loc(\Omega)$, of functions of (locally) bounded variation  $\BV(\X), \BV_\loc(\X)$, the first order differential structure including tangent and cotangent modules and the theory of normed modules. We also assume the reader to be familiar with  the definition of $\RCD$ spaces and  second order calculus  including the notions of covariant derivative and tensor calculus. For a comprehensive overview and detailed proofs concerning these topics, we refer the reader to the existing literature (see, e.g., \cite{Gigli14, GigliPasqualetto20book, Bjorn-Bjorn11,AmbrosioDiMarino14,Gigli23_working}).

In this section, we only briefly recall specific notations, definitions, and a few preliminary results that are tailored to our exposition and  necessary for the upcoming proofs.

We start with the notion of capacity.
	\begin{definition}[Capacity]\label{def:capacity}
		Let $\Xdm$ be a metric measure space and  $E\subset \X$ be a Borel subset. We define $\Cap(E)\in [0,\infty]$ as
		\[
		\Cap(E)=\inf\, \left \{ \int_\X |\d u|^2\, \d \mea \ : \ u \in \W(\X) \text{ and $u\ge 1$ $\mm$-a.e.\  in a neighborhood of $E$}\right \}.
		\]
	\end{definition}
     The function $\Cap(\cdot)$ is an outer measure and any function $u \in \W_\loc(\X)$ has a $\Cap$-a.e.\ unique quasi-continuous representative  denoted by ${\sf QCR}(u)$ (see \cite{debin2021quasi}). Recall that a function is said to be quasi-continuous if it is continuous outside a set of arbitrarily small capacity.

	\begin{lemma}[Equivalent definitions of capacity]\label{lem:cap bs}
		Let $E\subset \X$ be a bounded Borel set. Then the infimum in $\Cap(E)$ can be taken equivalently among all functions in any one of the following families:
        \begin{enumerate}[label=\roman*)]
        \item $u \in \W(\X)$ non-negative such that $u\ge 1$ $\mm$-a.e.\ in a neighborhood of $E$ (or ${\sf QCR}(u)\ge 1$ $\Cap$-a.e.\ in $E$).
            \item $u \in \W(\X)$ non-negative with bounded support such that $u\ge 1$ $\mm$-a.e.\ in a neighborhood of $E$ (or ${\sf QCR}(u)\ge 1$ $\Cap$-a.e.\ in $E$);
            \item   $u \in \W_\loc(\X)$ non-negative and vanishing at infinity\footnote{By vanishing at infinity for an $\mm$-a.e.\ defined function we mean that  $\lim_{R\to +\infty }\esssup_{\X\setminus B_R(x_0)} |u|=0.$}  such that $u\ge 1$ $\mm$-a.e.\ in a neighborhood of $E$ (or  ${\sf QCR}(u)\ge 1$ $\Cap$-a.e.\ in $E$).
        \end{enumerate}
	\end{lemma}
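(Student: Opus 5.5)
We write $\Cap_0,\Cap_{i},\Cap_{ii},\Cap_{iii}$ for the infima over the original family of Definition \ref{def:capacity} and over the families in i), ii), iii), taken with the neighborhood condition. We then show that the quasi-continuous variants give the same values.

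The plan is to prove the chain
\[
\Cap_{ii}\ge \Cap_i\ge\Cap_0\ge \Cap_i \quad\text{and}\quad \Cap_{iii}\le\Cap_{ii},\quad \Cap_{ii}\le \Cap_{iii}.
\]
The inclusions ii) $\subset$ i) $\subset$ (original family) and ii) $\subset$ iii) give the trivial inequalities $\Cap_{ii}\ge\Cap_i\ge \Cap_0$ and $\Cap_{iii}\le \Cap_{ii}$.

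\textbf{Step 1: $\Cap_i\le\Cap_0$.} Given an admissible $u$ for $\Cap_0$, replace it by $\tilde u\coloneqq \min(\max(u,0),1)$. This function is non-negative, still $\ge 1$ a.e.\ near $E$, and satisfies $|\d\tilde u|\le|\d u|$ by locality of the minimal weak upper gradient.

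\textbf{Step 2: $\Cap_{ii}\le\Cap_i$ and $\Cap_{ii}\le \Cap_{iii}$.} This is a cut-off argument, and it is where boundedness of $E$ enters. Take $u$ admissible for i), or for iii), truncated at $1$ as in Step 1. For iii), given $\eps>0$, choose $R$ with $u\le \eps$ a.e.\ outside $B_R(x_0)$ and set $u_\eps\coloneqq (u-\eps)^+/(1-\eps)$. Then $u_\eps$ has bounded support, is $\ge1$ near $E$, lies in $\W(\X)$ (it belongs to $\W_\loc$ and has bounded support), and
\[
\int|\d u_\eps|^2\le (1-\eps)^{-2}\int|\d u|^2 .
\]
For i) the same trick works only if $u$ vanishes at infinity, so instead we multiply by a Lipschitz cut-off $\eta_R$ equal to $1$ on $B_R$, vanishing outside $B_{2R}$, with $\Lip(\eta_R)\le 1/R$. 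Since $E\subset B_R$ for $R$ large, $\eta_R u$ is admissible, and
\[
\|\d(\eta_Ru)\|_{L^2}\le \|\d u\|_{L^2}+R^{-1}\|u\|_{L^2(B_{2R}\setminus B_R)}\to\|\d u\|_{L^2},
\]
because $u\in L^2$.

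\textbf{Step 3: quasi-continuous versions.} If $u\ge1$ a.e.\ near $E$, then ${\sf QCR}(u)\ge1$ q.e.\ on the open neighborhood by standard properties of quasi-continuous representatives \cite{debin2021quasi}. The neighborhood families are therefore contained in the QCR families, which gives ``QCR-infimum $\le$ neighborhood infimum''. For the converse, given $u$ with ${\sf QCR}(u)\ge1$ q.e.\ on $E$ and $\eps>0$, we use quasi-continuity to find an open $G$ with $\Cap(G)<\eps$ such that ${\sf QCR}(u)$ restricted to $\X\setminus G$ is continuous. Then $U\coloneqq\{{\sf QCR}(u)>1-\eps\}\cup G$ is open and contains $E$ up to a capacity-null set, which can itself be covered by an open set of capacity $<\eps$, still denoted $G$. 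Take $w\ge1$ near $G$ with $\|\d w\|_{L^2}^2\le 2\eps$ (truncated so that $0\le w\le1$). The function $\max(u/(1-\eps),w)$ is $\ge1$ a.e.\ on the neighborhood $U$ of $E$ and has energy at most $(1-\eps)^{-2}\int|\d u|^2+2\eps$. Letting $\eps\to0$ gives equality, and this works within each family, after applying Step 2 to $w$ for the bounded-support requirement.

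The main obstacle is Step 3. Making $U$ genuinely open requires the precise form of quasi-continuity (continuity off an open set of small capacity) together with outer regularity of $\Cap$. For family iii) the issue is that $w$ must also vanish at infinity, which can be arranged by using Step 2 first.
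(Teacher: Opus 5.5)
Your proposal is correct and follows the paper's proof: truncation or positive part for i), the Lipschitz cut-off $\eta_R$ with $\Lip(\eta_R)\le 1/R$ for ii), and $(u-\eps)^+/(1-\eps)$ for iii); for the ${\sf QCR}$ variants the paper only cites the standard outer-capacity argument of Bj\"orn--Bj\"orn (Theorem 5.31), and your Step 3 is exactly that argument written out. One detail to tidy: the small-capacity open set $G$ need not be bounded, so before using Step 2 to give $w$ bounded support, replace $G$ and $U$ by their intersections with a large ball $B_R\supset E$. This still covers $E$, and the bound $\max(u/(1-\eps),w)\ge 1$ a.e.\ on the new open set is preserved.
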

    \begin{proof}
    It is sufficient to show the result in the cases when $u\ge 1$ $\mm$-a.e.\ in a neighborhood of $E$, then the version where we replace it with ${\sf QCR}(u)\ge 1$ $\Cap$-a.e.\ in $E$ follows by standard arguments (see \cite[Theorem 5.31]{Bjorn-Bjorn11}). 

    i) Immediate from the definition of $\Cap(E),$ since taking the positive part decreases the energy.
    
       ii) Fix $x\in \X$ and fix any $R>0$ big enough so that $E\subset B_R(x).$ Take $\phi_R\in \LIP_{bs}(\X)$ such that, $0\le \phi_R\le 1$, $\phi_R=1$ in $B_{R}(x)$, $\phi_R=0$ in $\X\setminus B_{2R}(x)$ and $\Lip(\phi_R)\le 1/R$. For any $\delta>0$ there exists $u\in \W(\X)$ non-negative with  $u\ge 1$ $\mm$-a.e.\ in a neighborhood of $E$ and $\int_\X |\d u|^2\, \d \mea\le \Cap(E)+\delta$. Then $\phi_Ru\in \W(\X)$ has bounded support, $\phi_Ru\ge 1$  $\mm$-a.e.\ in a neighborhood of $E$  and
        \[
        \| |\d (u\phi_R)|\|_{L^2(\mm)}\le  \| |\d u|\|_{L^2(\mm)} + R^{-1}\|u\|_{L^2(\mm)}\le  \sqrt{\Cap(E)+\delta}+ R^{-1}\|u\|_{L^2(\mm)}.
        \]
        Sending $R\to +\infty$ and by the arbitrariness of $\delta$ we conclude.

        Item iii)  follows noting that for any $u$ as in the family and for all $\delta\in(0,1)$ it holds that $u_\delta \coloneqq (1-\delta)^{-1}(u-\delta)^+\in \W(\X)$ and that $u_\delta\ge 1$    $\mm$-a.e.\ in a neighborhood of $E$ and then taking $\delta \to 0^+.$
    \end{proof}

Next we recall the definition of Laplacian following \cite[Section 4]{Gigli12} (see also \cite{GigliPasqualetto20book}).
\begin{definition}[Laplacian]
    Let $\Xdm$ be an infinitesimally Hilbertian\footnote{$\Xdm$ is said to be inf.\ Hilbertian if $\W(\X)$ is a Hilbert space \cite{Gigli12}} m.m.s.\ and $\Omega\subset \X$ be open. We say that  $u \in \W_\loc(\Omega) $ belongs to the domain of the \emph{Laplacian} $\dom(\Delta,\Omega)$ if there exists  $h \in L^2_{\loc}(\Omega)$ such that
    \begin{equation}
        -\int_{\Omega} \langle \nabla f, \nabla u\rangle \d \mea=\int_{\Omega} f h \,\d \mm, \quad 	\text{for every $f \in \LIP_c(\Omega).$}
    \end{equation}
    Moreover the unique function $h$  with this property is called the \textit{Laplacian of $u$} in $\Omega$ and is denoted by $\Delta\restr{\Omega} u$.
\end{definition}

As shorthand we will often omit writing $u \in \dom(\Delta,\Omega)$ and write directly $\Delta\restr{\Omega} u = h$ for some function $h\in L^2_\loc(\Omega)$ or $\Delta\restr{\Omega} u \in L^2_\loc(\Omega).$ We will also write as usual $\dom(\Delta)\coloneqq \{u \in \dom(\Delta,\X)\cap \W(\X) \ : \ \Delta u \in L^2(\mm)\}$, see e.g.\  \cite{Gigli14,GigliPasqualetto20book}.
% [Qui puoi in

We say that a function $u \in \W_\loc(\Omega)$ is \textit{harmonic} if $\Delta\restr{\Omega} u = 0$. It is known that harmonic functions are continuous in any $\RCD(K,N)$ space with $N<\infty$ and in fact locally Lipschitz (see \cite{Jiang13,VIOLO2025}).

	\begin{definition}[Electrostatic potential]\label{def:electro}
		Let $\Xdm$ be a metric measure space and $K\subset \X$ be compact. An electrostatic potential for $K$ is a function $u\in C(\X\setminus K)$ that solves
		\[
		\begin{cases}
			u(x)\to 1, &\text{ as $x\to \partial K$ },\\
			u(x)\to 0,& \text{ as $\sfd(x,K)\to +\infty$ },\\
			\Delta\restr{\X\setminus K} u=0.
		\end{cases}
		\]
	\end{definition}

	The following result links the electrostatic potential and the capacity.
	\begin{prop}\label{prop:electro cap formula}
		Let $\Xdm$ be an $\RCD(0,N)$ space  with $N>2$ and\footnote{$\avr(\X)\coloneqq \lim_{r\to +\infty}\frac{\mm(B_r(x_0))}{\omega_Nr^N}$, which is independent of $x_0.$} $\avr(\X)>0$. Let $E\subset \X$ be a bounded Borel set. Then there exists $u\in \W_\loc(\X)\cap C(\X\setminus\bar E)$, harmonic in $\X\setminus \bar E$  such that $0\le u\le 1$, $u\ge 1$ $\mm$-a.e.\ in $E$,  $u(x)\to 0$ as $\sfd(x,E)\to +\infty$ and 
		\begin{equation}\label{eq:electro cap formula}
			\Cap(E)=\int_{\X}|\d u|^2 \d \mm.
		\end{equation}
        Moreover if $E$ is compact and admits an electrostatic potential $\bar u$, we have $u=\bar u$ in $\X\setminus E.$
	\end{prop}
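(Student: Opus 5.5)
I will construct the capacitary potential by an exhaustion argument: solve Dirichlet-type minimization problems on large balls and pass to the limit using the Euclidean volume growth.

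\textbf{Step 1: approximating problems.} Fix $x_0\in\X$ and $R$ large with $E\subset B_{R/2}(x_0)$. Let $u_R$ minimize $\int|\d v|^2\,\d\mm$ among $v\in\W(\X)$ with support in $\bar B_R(x_0)$ and ${\sf QCR}(v)\ge1$ $\Cap$-a.e.\ on $E$ (equivalently, $\Cap$ relative to $B_R$). The admissible class is a closed convex subset of the Hilbert space $\W_0(B_R)$; note $\W(\X)$ is Hilbert because RCD spaces are infinitesimally Hilbertian. Poincaré on bounded sets then gives existence and uniqueness. Truncation shows $0\le u_R\le1$, so $u_R=1$ quasi-everywhere on $E$. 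Perturbing by $\phi\in\LIP_c(B_R\setminus\bar E)$ gives harmonicity in $B_R\setminus\bar E$. By the comparison principle, $u_R$ is nondecreasing in $R$, since $u_{R'}-u_R$ is harmonic with nonnegative boundary values on $B_R\setminus\bar E$. Moreover $\int|\d u_R|^2$ is nonincreasing, and by Lemma \ref{lem:cap bs}(ii) it converges to $\Cap(E)$.

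\textbf{Step 2: limit and decay.} Set $u\coloneqq\lim_R u_R$, which is monotone with $0\le u\le1$. The gradients are bounded in $L^2$, so $u_R\to u$ in $\W_\loc$ and weakly. Harmonicity passes to the limit, continuity in $\X\setminus\bar E$ follows from Lipschitz regularity of harmonic functions, and lower semicontinuity gives $\int|\d u|^2\le\Cap(E)$. The main obstacle is showing $u\to0$ at infinity; without it $u$ could be identically $1$, which happens in parabolic spaces. Here I would use $\avr(\X)>0$ and $N>2$. These give Green function bounds $G(x,y)\le C\sfd(x,y)^{2-N}$ via Li--Yau type heat kernel estimates on $\RCD(0,N)$ spaces. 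I would then compare $u_R$ with the barrier $C'\,G(\cdot,x_0)$, adjusted to exceed $1$ on $B_{R_0}\supset E$, which is superharmonic away from $x_0$. The maximum principle on $B_R\setminus B_{R_0}$ gives $u_R\le C'\sfd(\cdot,x_0)^{2-N}$ uniformly in $R$, hence $u(x)\to0$ as $\sfd(x,E)\to\infty$.

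\textbf{Step 3: the capacity identity and uniqueness.} Since $u$ vanishes at infinity, satisfies ${\sf QCR}(u)\ge1$ on $E$, and lies in $\W_\loc$, Lemma \ref{lem:cap bs}(iii) gives $\Cap(E)\le\int|\d u|^2$. To get $u\ge1$ a.e.\ in a neighborhood of $E$ one may instead enlarge $E$ slightly, or use the QCR version. Combined with Step 2 this yields \eqref{eq:electro cap formula}. Finally, if $E$ is compact with electrostatic potential $\bar u$, both $u$ and $\bar u$ are harmonic in $\X\setminus E$, tend to $1$ at $\partial E$, and vanish at infinity. For $u$, boundary behaviour at $\partial E$ follows by comparing $u_R\le u\le1$ with $\bar u_R$, the solution with continuous data, or by regularity of $E$ implicit in the existence of $\bar u$. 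The maximum principle applied on $B_R\setminus E$ with $\varepsilon$-margins, letting $R\to\infty$, gives $u=\bar u$.
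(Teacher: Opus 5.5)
Your construction of $u$ and the proof of \eqref{eq:electro cap formula} follow the paper's argument: relative capacities on balls, an $R$-independent Green-function barrier for the decay, lower semicontinuity, and Lemma \ref{lem:cap bs}(iii) in its quasi-continuous form. One small fix: for $u_R\le u_{R'}$, use the lattice argument rather than a boundary comparison on $B_R\setminus\bar E$, since you have no boundary information on $\bar E\setminus E$. The paper simply extracts a subsequence anyway.

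The genuine gap is in the identification $u=\bar u$. The inequality $u\le\bar u$ is fine. Indeed $(u-\bar u-\eps)^+$ vanishes near $\partial E$, where $\bar u>1-\eps\ge u-\eps$, and far away, so it lies in $\W_0$ of a set compactly contained in $\X\setminus E$, and harmonicity kills it. The reverse inequality, however, needs $u\to1$ at $\partial E$, and nothing you wrote proves this. The function $u$ is a variational solution, tied to $E$ only through ${\sf QCR}(u)\ge1$ $\Cap$-a.e.\ on $E$, and such solutions need not attain boundary values at irregular points. Comparing with ``the solution $\bar u_R$ with continuous data'' presupposes that the energy minimizer coincides with a classical (or Perron) solution, which is exactly the regularity question. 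Likewise, ``regularity of $E$ implicit in the existence of $\bar u$'' would require a barrier characterization of regular points for Sobolev solutions in the metric setting, which you neither state nor prove. Nor can you run a weak comparison starting from $\bar u$: a priori $\bar u$ is only in $\W_\loc(\X\setminus E)$, and you do not know it has finite energy up to $\partial E$.

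The missing idea, which the paper supplies, is to avoid pointwise boundary behaviour of $u$ altogether. Instead one proves that $\bar u$, extended by $1$ on $K$, belongs to $\W_\loc(\X)$ with $\int|\d\bar u|^2\,\d\mm\le\Cap(K)$. To do this, the paper approximates $K$ by the tubular neighbourhoods $\overline{K^\eps}$, which have cap-fat boundary. Hence their electrostatic potentials $u_\eps$ exist, attain the value $1$ on the boundary, and have energy at most $\lim_R\Cap(K^\eps,B_R(x_0))\le\Cap(K)+\delta$ for $\eps$ small. Since both functions are continuous up to $\partial K^\eps$, the comparison principle gives $\sup_{\X\setminus K^\eps}|\bar u-u_\eps|\le\sup_{\partial K^\eps}|\bar u-1|\to0$. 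Lower semicontinuity then transfers the energy bound, and the Sobolev regularity, to $\bar u$. Now $\bar u$ is an admissible competitor in Lemma \ref{lem:cap bs}(iii), so it is also a minimizer. Strict convexity of the energy applied to $\frac{u+\bar u}{2}$ forces $|\d(u-\bar u)|=0$, hence $u=\bar u$, since both vanish at infinity.
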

	\begin{proof}
    The proof is very similar to \cite[Theorem 8.4]{GigliViolo23}.  Let $\Cap(E,B_n(x_0))$ denote the relative capacity, i.e.\ the infimum of $\int |\d v|^2\d \mm$ among all $v\in \W_0(B_n(x_0))$ with $v\ge 1$  $\mm$-a.e.\ in a neighborhood of $E$. As in Lemma \ref{lem:cap bs}, $\Cap(E,B_n(x_0))$ is also realized by taking the infimum among all $v\in \W_0(B_n(x_0))$ with ${\sf QCR}(v)\ge 1$  $\Cap$-a.e.\ in  $E$. Since this set of competitors is convex and weakly closed in $\W(\X)$ there exists a minimizer $u_n$ with $0\le u_n\le 1$. Moreover $u_n\le v$ $\mm$-a.e.\ in $B_n(x_0)$ for any $v\in \W(B_n(x_0))$ superharmonic and such that $v\ge 1$ $\Cap$-a.e.\ in $E$ (the proof of this is the same as in \cite[Lemma B.9]{GigliViolo23} replacing $\mm$-a.e.\ with $\Cap$-a.e.).  In particular $u_n\le G$ $\mm$-a.e.\ in $B_n(x_0)$ for a function $G$ independent of $n$ and vanishing at infinity (see the  end of the proof of \cite[Theorem 8.4]{GigliViolo23}). Additionally $u_n$ is harmonic in $B_n(x_0)\setminus \bar E.$ It is clear that  $\Cap(E,B_n(x_0))$ is non-increasing and by Lemma \ref{lem:cap bs}
\[
\lim_{n\to +\infty }\Cap(E,B_n(x_0))=\Cap(E).
\]
    Since $(u_n)_n$ is bounded in $\W(B_R(x_0))$ for all $R>0$ and each $u_n$ harmonic in $B_n(x_0)\setminus \bar E$, up to a subsequence, $u_n$  converges weakly in $\W_\loc(\X)$ and locally uniformly in $\X\setminus \bar E$ to some $u\le G$. By lower semicontinuity $\int_{\X} |\d u|^2\d \mm\le \liminf_n \int_{\X} |\d u_n|^2\d \mm=\Cap(E).$  In particular $u\in C(\X\setminus\bar E)$ and vanishes at infinity and  ${\sf QCR}(u)\ge 1$ $\Cap$-a.e.\ in $E$. Hence by Lemma \ref{lem:cap bs} we conclude $\int_{\X} |\d u|^2\d \mm=\Cap(E).$
    
    Suppose now that $E=K$ is compact and admits an electrostatic potential $\bar u.$
		For any $\eps>0$ set $K^\eps$ to be the $\eps$-open tubular neighborhood of $K$. From \cite[Theorem 8.4]{GigliViolo23}, which can be applied because $K^\eps$ has cap-fat boundary (see the end of page 80 in \cite{GigliViolo23}),  we know that the electrostatic potential $u_\eps$ of $\overline {K^\eps}$ exists  and 
		$$\int_{\X\setminus \overline {K^\eps} } |\d u_\eps|^2\d \mm\le \lim_{R \to +\infty} \Cap(K^\eps,B_R(x_0)),$$
		for any choice $x_0\in\ X.$ We also have $u_\eps \in \W_\loc(\X)$ by setting $u_\eps=1$ in $K^\eps.$  Take any $\delta>0.$ By Lemma \ref{lem:cap bs} there exists $v\in \W(\X)$ with bounded support  such that $v\ge 1$  $\mm$-a.e.\ in a neighborhood of $K$ such that $\int |\d v|^2\d \mm \le \Cap(K)+\delta $. Hence  $v\in \W_0(B_R(x_0))$ for any $R$ big enough and  $v\ge 1$ $\mm$-a.e.\ in $K^\eps$ for any $\eps$ small enough  (independently of $R$) and thus  $v$  is admissible for $\Cap(K^\eps,B_R(x_0))$. We obtain
		\[
		\liminf_{\eps \to 0^+} \lim_{R \to +\infty} \Cap(K^\eps,B_R(x_0))\le \Cap(K)+\delta .
		\]
		Since $\bar u(x)\to 1$ as $x\to \partial K$ we have that $\sup_{\partial K^\eps }| \bar u-1|\to 0$ as $\eps \to 0.$ On the other hand $u_\eps=1$ in $K_\eps$. Moreover both $\bar u$ and $u_\eps$ vanish at infinity. Therefore by the comparison principle
		\[
		\sup_{\X\setminus K^\eps }|\bar u-u_\eps| \le \sup_{\partial K^\eps }| \bar u-1|\to 0.
		\]
		Therefore by lower semicontinuity of the energy 
		\begin{align*}
			\int_{U} |\d \bar u|^2\d \mm&\le \liminf_{\eps \to 0^+} \int_{U} |\d u_\eps|^2\d \mm\le  \liminf_{\eps \to 0^+}  \int_{\X\setminus  \overline {K^\eps} } |\d u_\eps|^2\d \mm\\
			&\le 
			\liminf_{\eps \to 0^+}  \lim_{R \to +\infty} \Cap(K^\eps,B_R(x_0))\le \Cap(K)+\delta , 
		\end{align*}
		for all $U\subset \subset \X\setminus K.$ By the arbitrariness of $U$ and $\delta$ we obtain 
		\[
		\int_{\X\setminus K}|\d \bar u|^2 \d \mm\le  \Cap(K).
		\]
        Moreover $\bar u \in \W_\loc(\X)$ by setting $\bar u=1$ in $K$, because $u_\eps \in \W_\loc(\X)$ and again by lower semicontinuity.
        Hence by Lemma \ref{lem:cap bs} we conclude $\int_{\X} |\d \bar u|^2\d \mm=\Cap(K).$ The fact that $\bar u=u$ in $\X\setminus K$ follows from the fact that the function $\frac{u+\bar u}{2}$ is an admissible competitor for $\Cap (K)$ (see Lemma \ref{lem:cap bs}) combined with strict convexity.
	\end{proof}

The following regularity result for harmonic functions was shown in \cite[Section 4]{GigliViolo23}.
    \begin{theorem}\label{cor:harmest2}
		Let $(\X,\sfd,\mea)$ be an ${\sf RCD}(K,N)$ space, $N \in[2,\infty)$, and let $u$ be harmonic in $\Omega\subset \X$ open.   Then for all $\beta \ge \frac{N-2}{N-1}$ it holds $|\d u|^{\frac \beta2} \in \W_\loc(\Omega)$  and $|\d u|^\beta\in \dom(\bd,\Omega)$. In particular
		\begin{equation}\label{eq:harmest2}
			\nchi_{\{|\d u|>0\}}\frac{|\d |\d u||^2}{|\d u|} \in L^1_\loc(\Omega).
		\end{equation}
	\end{theorem}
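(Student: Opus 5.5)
The plan is to derive everything from the Bochner inequality for harmonic functions, combined with the improved (refined) Kato inequality, and then apply a chain-rule computation to a regularized power of $|\d u|$. By locality it suffices to work on a fixed ball $B\subset\subset\Omega$, where $u$ is Lipschitz (harmonic functions are locally Lipschitz in $\RCD(K,N)$, $N<\infty$). Recall that on $\RCD(K,N)$ spaces, for $u$ harmonic in $\Omega$, the function $|\d u|^2$ has a measure-valued Laplacian in $\Omega$ and
\[
\tfrac12\bd |\d u|^2\ \ge\ \big(|{\rm Hess}\,u|^2 + K|\d u|^2\big)\mm .
\]
Moreover, since $\Delta u=0$ and the dimension bound is $N$, we have the improved Kato inequality $|{\rm Hess}\,u|^2\ge \frac{N}{N-1}|\d|\d u||^2$ $\mm$-a.e.\ on $\{|\d u|>0\}$. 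I would take both facts from the second order calculus on $\RCD(K,N)$ spaces; in the nonsmooth setting the refined Kato comes from the dimensional Bochner inequality, using that ${\rm tr}\,{\rm Hess}\,u=\Delta u=0$.

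Fix $\eps>0$, write $s=|\d u|^2$, and set $f_\eps\coloneqq(s+\eps)^{\beta/2}$. By the chain rule for the measure-valued Laplacian, with a $C^2$ function that is convex or concave on the relevant range (justified by the usual approximation argument via $\dom(\bd)$ and test functions),
\[
\bd f_\eps\ \ge\ \tfrac\beta2 (s+\eps)^{\frac\beta2-1}\bd s+\tfrac\beta2\big(\tfrac\beta2-1\big)(s+\eps)^{\frac\beta2-2}|\d s|^2 .
\]
Next I insert Bochner, use $|\d s|^2=4s|\d|\d u||^2$, and apply the improved Kato inequality. The coefficient of $(s+\eps)^{\frac\beta2-2}|\d|\d u||^2$ then becomes
\[
\beta\Big[\tfrac{N}{N-1}(s+\eps)+(\beta-2)s\Big]=\beta s\Big(\beta-\tfrac{N-2}{N-1}\Big)+\tfrac{\beta N}{N-1}\eps .
\]
This is nonnegative exactly when $\beta\ge\frac{N-2}{N-1}$. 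Hence
\[
\bd f_\eps\ \ge\ -\beta|K|\,(s+\eps)^{\beta/2}\mm+\Big(\beta-\tfrac{N-2}{N-1}\Big)\beta\, s(s+\eps)^{\frac\beta2-2}|\d|\d u||^2\,\mm ,
\]
and the first term is bounded below uniformly in $\eps$ on $B$.

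Since $f_\eps$ is uniformly bounded on $B$ and $\bd f_\eps\ge -C\mm$ there, testing against a cutoff $\eta\in\LIP_c(\Omega)$ gives a bound on the total variation $|\bd f_\eps|(\{\eta=1\})$ that is uniform in $\eps$. The same testing also bounds $\int\eta\,s(s+\eps)^{\frac\beta2-2}|\d|\d u||^2\d\mm$ uniformly in $\eps$ whenever $\beta>\frac{N-2}{N-1}$. Letting $\eps\to0$, using monotone convergence on $\{|\d u|>0\}$ and weak compactness of the measures, I obtain $|\d u|^\beta\in\dom(\bd,\Omega)$. I also obtain $|\d u|^{\beta-2}|\d|\d u||^2\nchi_{\{|\d u|>0\}}\in L^1_\loc$, which equals $\frac4{\beta^2}|\d|\d u|^{\beta/2}|^2$. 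Together with local boundedness of $|\d u|^{\beta/2}$ and the fact that $|\d u|$ vanishes $\mm$-a.e.\ where its differential does, this gives $|\d u|^{\beta/2}\in\W_\loc(\Omega)$. The ``in particular'' \eqref{eq:harmest2} is the case $\beta=1$, which is admissible and strictly above the threshold since $\frac{N-2}{N-1}<1$. It can alternatively be read off from $|\d u|^{1/2}\in\W_\loc$.

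The main obstacle is the endpoint $\beta=\frac{N-2}{N-1}$ for the Sobolev claim, since there the good term degenerates to the $\eps$-part only. I would first try to obtain $|\d u|^{\beta_0/2}\in\W_\loc$ by a weighted Caccioppoli argument. Concretely, I would test the inequality for $f_\eps$ with $\beta$ slightly above $\beta_0$ against $\eta (s+\eps)^{-\delta}$, and balance the extra gradient terms. If that fails, I would keep the full $|{\rm Hess}\,u|^2$ term instead of discarding it via the sharp Kato constant: using that equality in Kato forces a rigid Hessian structure, one still gains a fraction of $|\d|\d u||^2$ away from the equality configuration. A secondary technical point is the rigorous justification of the chain rule for $\bd$ applied to $(s+\eps)^{\beta/2}$, which I expect to be routine via the regularization arguments in the second order calculus references.
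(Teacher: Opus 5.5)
The paper does not prove this statement; it quotes it from \cite[Section 4]{GigliViolo23}. Your route is the improved Bochner inequality followed by the chain rule for $\bd$ applied to $(|\d u|^2+\eps)^{\beta/2}$, with $s=|\d u|^2$ as in your notation. It yields $|\d u|^\beta\in\dom(\bd,\Omega)$ for all $\beta\ge\frac{N-2}{N-1}$, $|\d u|^{\beta/2}\in\W_\loc(\Omega)$ for $\beta>\frac{N-2}{N-1}$, and \eqref{eq:harmest2} as the case $\beta=1$. It does need one correction and one gap filled.

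\textbf{The Kato input is misstated.} The pointwise inequality $|{\rm Hess}\,u|^2\ge\frac{N}{N-1}|\d|\d u||^2$ is false on collapsed $\RCD(K,N)$ spaces, because there ${\rm tr}\,{\rm Hess}\,u\neq\Delta u$. In the model space $([0,\infty),\sfd_{eu},\mm_N)$ of Lemma \ref{lem:cap in IN}, $u=t^{2-N}$ is harmonic and $|{\rm Hess}\,u|=|u''|=|\d|\d u||\neq0$. What is true, and is all your computation actually uses, is the combined inequality $\frac12\bd|\d u|^2\ge\big(\frac{N}{N-1}|\d|\d u||^2+K|\d u|^2\big)\mm$. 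It comes from the $N$-dimensional Bochner inequality, whose dimensional term compensates the trace defect; in the example it holds with equality. With this replacement your coefficient $\beta s\big(\beta-\frac{N-2}{N-1}\big)+\frac{\beta N}{N-1}\eps$ is correct. Separately, the locality fact you need is that $\d|\d u|=0$ a.e.\ on $\{|\d u|=0\}$, not the converse you wrote.

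\textbf{The genuine gap is the endpoint.} The claim $|\d u|^{\beta/2}\in\W_\loc(\Omega)$ at $\beta=\frac{N-2}{N-1}$ is nontrivial for $N>2$, and you leave it open. Your first fallback cannot work. Testing against $\eta(s+\eps)^{-\delta}$ produces the cross term $-2\delta\beta\, s(s+\eps)^{\frac\beta2-\delta-2}|\d|\d u||^2$. The net coefficient in front of the power $s^{\frac{\beta-2\delta}{2}-1}$ is then $\beta\big(\beta-2\delta-\frac{N-2}{N-1}\big)$, so you are back at the same threshold.

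More fundamentally, no argument that uses only the improved Bochner inequality can reach the endpoint. On $\rr^n$, take $f=|x_1|^{\frac{N-1}{N-2}}$. Then $f^2\in C^2$ and $\frac12\Delta f^2=\frac{N}{N-1}|\nabla f|^2$ pointwise, yet $f^{\frac{N-2}{2(N-1)}}=|x_1|^{1/2}\notin W^{1,2}_\loc$. Of course $f$ is not $|\d u|$ for a harmonic $u$. So the endpoint needs information specific to gradients of harmonic functions that excludes this kind of degeneracy. Your second fallback, gaining away from the Kato equality configuration, does not supply it: near equality there is nothing to gain, and no mechanism is given.

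As it stands, that endpoint must be imported from \cite{GigliViolo23}, as the paper does. It is, however, never used later in the paper. Corollary \ref{cor:H formula for harmonic functions}, Lemmas \ref{lem:phi}, \ref{lem:w12}, \ref{lem:bv} and Theorem \ref{thm:monotonicity} need only \eqref{eq:harmest2}, $|\d u|^\beta\in\W_\loc$ and $|\d u|^\beta\in\dom(\bd,\Omega)$ for $\beta\ge\frac{N-2}{N-1}$. At $\beta=\frac{N-2}{N-1}$ these follow from your non-endpoint result with exponent $2\frac{N-2}{N-1}$. That result gives the uniform $\W_\loc$ bound on $(s+\eps)^{\frac{N-2}{2(N-1)}}$ that your limit $\eps\to0$ requires.
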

	
	Recall the class of test functions $\test(\X)\coloneqq \left\{f\in L^\infty\cap \LIP(\X)\cap \dom(\Delta)\ : \  \Delta f\in \W\cap L^\infty(\X)  \right\}$ and test vector fields   $\testv(\X)\coloneqq\{\sum_{i=1}^m f_i\nabla g_i \ : \ f_i,g_i\in \test(\X) \}$   (see \cite{Gigli14,Savare13}).

    In the sequel we will use the following approximation lemma which we could not find in the literature.
	\begin{lemma}\label{lem:approximation with bounded testv}
		Let $\Xdm$ be an $\RCD({ K},\infty)$ space and $v\in L^2(T\X)$ with $|v|\le 1$. Then for all $\delta>0$ there exists $v_n \in \testv(\X)$ with $|v_n|\le 1+\delta$ and such that $v_n \to v$ in $L^2(T\X)$.
	\end{lemma}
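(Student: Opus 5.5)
We combine the density of test vector fields in $L^2(T\X)$ with a nonlinear truncation that is applied through the heat flow. A plain truncation $v_n\mapsto v_n\min(1,1/|v_n|)$ leaves $\testv(\X)$, so instead we multiply by a \emph{test function} that approximates $\min(1,1/|v_n|)$.

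First, since $\testv(\X)$ is dense in $L^2(T\X)$ (see \cite{Gigli14}), we choose $w_n\in\testv(\X)$ with $w_n\to v$ in $L^2(T\X)$. Next we replace $w_n$ by $\tilde w_n\coloneqq w_n\, \psi(|w_n|^2)$, where $\psi\colon[0,\infty)\to(0,1]$ is smooth, $\psi(s)=1$ for $s\le 1$, and $\sqrt s\,\psi(s)\le 1+\delta/2$ for all $s$. Because $|v|\le 1$, pointwise we have $|\tilde w_n-v|\le|w_n-v|$ on $\{|w_n|\le1\}$. On the complementary set the projection onto the closed ball is $1$-Lipschitz, and we may also choose $\psi$ so that $\sqrt s\,\psi(s)\ge 1$ for $s\ge1$. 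Then $|\tilde w_n - v|\le |w_n-v|+\delta/2$, and the cleaner estimate is $|\tilde w_n-v|\le 2|w_n-v|$. Indeed, on $\{|w_n|>1\}$ we have $|\tilde w_n-w_n|\le|w_n|-1\le|w_n-v|$. Hence $\tilde w_n\to v$ in $L^2$, by dominated control on a finite-measure set, after first localizing with a cutoff. Since $\mm$ may be infinite, we also multiply $v$ by a test cutoff; this costs little in $L^2$.

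The remaining problem is that $\tilde w_n$ is not a test vector field, because $\psi(|w_n|^2)$ is only Sobolev and bounded. The function $|w_n|^2$ belongs to $W^{1,2}\cap L^\infty$ (this follows from the calculus for test vector fields), so $g_n\coloneqq\psi(|w_n|^2)\in W^{1,2}\cap L^\infty$ with $0< g_n\le 1$. We replace $g_n$ by $h_t g_n$, where $h_t$ is the heat flow; for $t>0$ this lies in $\test(\X)$ by the Bakry--\'Emery estimates in $\RCD(K,\infty)$ spaces. Writing $w_n=\sum_i f_i\nabla g_i$, the product $h_tg_n\, w_n=\sum_i (h_tg_n f_i)\nabla g_i$ belongs to $\testv(\X)$ because $\test(\X)$ is an algebra. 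As $t\to0$ we have $h_tg_n\to g_n$ in $L^2$ and boundedly a.e. along a subsequence. Since $w_n$ is bounded, $h_tg_n w_n\to\tilde w_n$ in $L^2(T\X)$.

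The main obstacle is to preserve the pointwise bound $|h_tg_n\,w_n|\le 1+\delta$, because the heat flow averages $g_n$ and can exceed $\psi(|w_n|^2)$ exactly where $|w_n|$ is large. We first handle this by using the continuity of $|w_n|^2$ (test vector fields have continuous norms in the $\RCD$ setting) together with the locality and uniform continuity of the heat flow on bounded Lipschitz functions. We note that $|w_n|^2$ is Lipschitz, and $g_n$ is then Lipschitz and bounded, so $h_tg_n\to g_n$ uniformly as $t\to0$, at rate $\Lip(g_n)\sqrt t$ plus a $K$-correction. Therefore $|h_tg_n\,w_n|\le|g_nw_n|+\|h_tg_n-g_n\|_\infty\|w_n\|_\infty\le 1+\delta/2+\delta/2$ for small $t$, which is the required bound $1+\delta$. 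A diagonal argument in $n$ and $t$ then gives the claimed sequence.
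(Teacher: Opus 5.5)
\paragraph{Verdict: genuine gap.} Your truncation step is fine, and the bound $|\tilde w_n-v|\le 2|w_n-v|$ is correct. The trouble is the step you yourself call the main obstacle: it is not established, and the reason you give for it is false in general.

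\paragraph{Where it fails.} To keep $|h_tg_n\,w_n|\le 1+\delta$ you need $\|h_tg_n-g_n\|_{L^\infty}\to 0$. You derive this from the claim that $|w_n|^2$ is Lipschitz. For $g\in\test(\X)$ one only knows $|\nabla g|^2\in W^{1,2}\cap L^\infty$, because Hessians of test functions need not be bounded. For example, take the $\RCD(0,2)$ cone over a circle of length $\alpha\in(4\pi/3,2\pi)$, and set $g=\eta(r)\,r^{\nu}\cos(\nu\theta)$ with $\nu=2\pi/\alpha\in(1,\tfrac32)$ and $\eta$ a smooth cut-off that is constant near $r=0$. Then $g\in\test(\X)$, but near the tip $|\nabla g|^2=\nu^2r^{2\nu-2}$, which is not Lipschitz. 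Your other claim, that $|w_n|$ is continuous, is also not justified in an $\RCD(K,\infty)$ space. For a function known only to be bounded and Sobolev, the heat flow converges in $L^2$ but not uniformly. So $h_tg_n$ may exceed $\psi(|w_n|^2)$ exactly where $|w_n|$ is large. The pointwise bound $|v_n|\le 1+\delta$, which is the whole content of the lemma, is therefore left unproved.

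\paragraph{The missing idea.} The paper gets the pointwise bound from \emph{Lipschitz constants of scalar potentials}, not from any regularity of $|w_n|$. It proceeds in three reductions.
\begin{enumerate}
\item It reduces a general $v$ to $\sum_k\nchi_{K_k}\nabla f_k$, with disjoint compact sets $K_k$ and $|\d f_k|\le 1+\delta$ on $K_k$.
\item It reduces this, up to a factor $1+O(\delta)$, to $\sum_i\nchi_{B_i}\nabla f_i$ with $f_i\in\test(\X)$ and $\Lip(f_i)\le 1$. This is done by:
\begin{itemize}
\item approximating $f_k$ by Lipschitz functions with $\lip_a f_n\to|\d f_k|$;
\item restricting to compact pieces on which these are $(1+2\delta)$-Lipschitz, and extending;
\item applying the mollified heat flow, whose effect on Lipschitz constants is controlled by Bakry--\'Emery.
\end{itemize}
\item It replaces $\nchi_{B_i}$ by $\tilde\alpha_{i,n}=\alpha_{i,n}/\ell_n(\sum_j\alpha_{j,n})$, where $\alpha_{i,n}=H_{1/n}(\nchi_{B_i})$ and $\ell_n$ is smooth with $\ell_n\ge\max(\cdot,1)$ on $[0,\infty)$. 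These satisfy $\sum_i\tilde\alpha_{i,n}\le 1$, so $|\sum_i\tilde\alpha_{i,n}\nabla f_i|\le\sum_i\tilde\alpha_{i,n}\Lip(f_i)\le 1$ a.e., with no continuity needed.
\end{enumerate}

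\paragraph{Smaller points.} Two further issues in your write-up would also need repair, though both are easily fixed.
\begin{itemize}
\item $\psi(|w_n|^2)\notin L^2(\mm)$ when $\mm(\X)=\infty$. You should regularize $1-\psi(|w_n|^2)$ instead, which is supported in $\{|w_n|\ge 1\}$.
\item The paper's definition of $\test(\X)$ requires $\Delta f\in L^\infty$. So you must use the mollified heat flow $H_\eps$ rather than $h_t$.
\end{itemize}
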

	\begin{proof}
        \noindent {\bf Step 1}:  \textit{$v=\sum_{i=1}^m\nchi_{B_i} \nabla f_i$ with $f_i\in \test(\X)$,   $\Lip(f_i)\le 1$ and $B_i\subset \X$ disjoint compact sets.} 

        For all $n\in\nn$ and for all $i=1,\dots,m$ set $\alpha_{i,n}\coloneqq H_{1/n}(\nchi_{B_i})\in \test(\X)$, where $H_{\eps}(\cdot)$ denotes the mollified heat flow $H_{\eps}(u)=\int_0^\infty \eta_{\eps}(s)h_s(u)\d s$ with $\eta\in C^\infty_c(0,\infty)$ (see e.g.\ \cite[Prop.\ 5.2.18]{GP20}). Then  $0< \alpha_{i,n}\le 1$ and $\alpha_{i,n}\to \nchi_{B_i}$ in $L^2(\mm)$ as $n\to \infty$. Up to a subsequence, we can assume that $\alpha_{i,n}\to \nchi_{B_i}$ $\mm$-a.e.\ as well. For all $n\in \nn$ consider a function $g_n\in C^\infty(\rr)$ such that $g_n(t)\ge \max(t,1)$ for all $t\ge 0$ and $g_n(t)\to \max(t,1)$ as $n\to +\infty$ for all $t\in \rr$ (e.g.\ $g_n(t)\coloneqq (1+t+\sqrt{(1-t)^2+1/n})/2$). Define $\tilde \alpha_{i,n}\coloneqq\frac{\alpha_{i,n}}{g_n(\sum_{i=1}^m \alpha_{i,n})}$ which belongs to $\test(\X)$. Note that, since $B_i$ are pairwise disjoint, $g_n(\sum_{i=1}^m \alpha_{i,n})\to 1$ $\mm$-a.e.\ in $B_i$ as $n\to +\infty$, for all $i=1,\dots,m.$ Hence, by the dominated convergence theorem, $\tilde \alpha_{i,n}\to \nchi_{B_i}$ in $L^2(\mm)$ for all $i=1,\dots,m.$ Therefore $v_n\coloneqq \sum_{i=1}^m \tilde \alpha_{i,n} \nabla f_i\in \testv(\X)$ and $v_n\to v$  in $L^2(T\X)$. Finally $|v_n|\le\sum_{i=1}^m \tilde \alpha_{i,n}= \frac{\sum_{i=1}^m \alpha_{i,n}}{g_n(\sum_{i=1}^m \alpha_{i,n})}\le 1$ $\mm$-a.e.

		\noindent {\bf Step 2}: \textit{$v=\sum_{k=1}^N\nchi_{K_k} \nabla f_k$ with $f_k \in \W(\X)$ and pairwise disjoint compact sets $K_k\subset \X$.} 
        Note that $|\d f_k|\le 1$ $\mm$-a.e.\ in $K_k$. We first approximate each term $\nchi_{K_k}\nabla f_k$ separately. Fix $k\in \{1,\dots,N\}$ and set $K\coloneqq K_k$ and $f\coloneqq f_k.$
		Consider $f_n\in \LIP_{bs}(\X)$ such that  $\lip_af_n\to |\d f|$ in $L^2(\mm)$ (and $f_n\to f$ in $\W(\X)$). 
        We have that $\lim_{n\to+\infty}\mm(\{\lip_af_n> 1+\delta\}\cap K)=0.$ Hence, by inner regularity, we can find compact sets $K_n \subset K$ such that $\lip_af_n\le 1+\delta$ in $K_n$ and $\mm(K\setminus K_n)\to 0.$  Since  $K_n$ is compact and by the definition of $\lip_a(f_n)$, for each $n$ we can find finitely many disjoint subsets $B_n^j\subset K_n$, $j=1,\dots,m(n)$, such that $\Lip(f_n\restr{B_n^j})\le 1+2\delta$. We can then extend $f_n\restr{B_n^j}$ to a function $g_n^j\in \LIP_{bs}(\X)$ with $\Lip(g_n^j)\le 1+3\delta$ (see \cite{DiMarino2020GlobalLipschitz}). Therefore the sequence $v_n\coloneqq \sum_{j=1}^{m(n)}\nchi_{B_n^j} \nabla H_{\eps_{j,n}}g_{n}^j $ satisfies $v_n\to \nchi_K \nabla f$  in $L^2(T\X)$ provided we choose each $\eps_{j,n}$ small enough. 
        Additionally $ H_{\eps_{j,n}}g_{n}^j \in \test(\X)$ and  by the Bakry-Émery gradient inequality (see \cite{AmbrosioGigliSavare11-2}) it holds $\Lip( H_{\eps_{j,n}}g_{n}^j )\le (1+4\delta) $ if $\eps_{j,n}$ are chosen small enough. Hence  $(1+4\delta)^{-1}v_n$ satisfies the assumption of Step 1. Repeating this construction for all $K_k$ we find a sequence $(1+4\delta)^{-1}v_n^k$, satisfying the assumption of Step 1, and such that $v_n^k\to \nchi_{K_k} \nabla f_k$  in $L^2(T\X)$. Moreover, all the Borel sets appearing in the definition of all the  $\{v_n^k\}_{n,k}$ are pairwise disjoint. Hence $(1+4\delta)^{-1}w_n\coloneqq (1+4\delta)^{-1}\sum_{k=1}^N v_n^k$ also satisfies the assumption of Step 1  and $w_n\to v$ in $L^2(T\X)$.
       Thus, for all $n$, $(1+4\delta)^{-1}w_n$ can be approximated by a sequence $(w_h^n)_h\subset\testv(\X)$ with $|w_h^n|\le 1+\delta$. By a diagonal argument $v$ is in $L^2(T\X)$-closure of $\{(1+\delta)w_h^n\}_{h,n}$. This concludes the proof (up to decreasing $\delta$).

		\noindent {\bf Step 3}: \textit{$v\in L^2(T\X)$ arbitrarily chosen.} Since $L^2(T\X)$ is generated by $\{\nabla f \ : \ f \in \W(\X)\}$, $v$ can be approximated in $L^2(T\X)$ by elements of the form $v_n=\sum_{k=1}^{N_n} \nchi_{B_k^n}\nabla f_k^n$ with $f_k^n \in \W(\X)$ and $B_k^n\subset \X$ pairwise disjoint Borel sets. Fix $\delta>0.$ For all $n$ we have $\sum_{k=1}^{N_n}\mm(\{|\nabla f_k^n|\ge 1+\delta\}\cap B_k^n)\le \delta^{-2}\|v-v_n\|_{L^2(T\X)}^2.$ Hence, up to  restricting the sets $B_k^n$, we can  assume that $|\nabla f_k^n|\le 1+\delta$ in $B_k^n$ for all $n.$ Finally by inner regularity and up to further restricting the sets $B_k^n$, we can assume that  $B_k^n$ is compact for all $k$ and $n$. Hence  $(1+\delta)^{-1}v_n$ satisfies the assumption of Step 2 for all $n$ and thus can be approximated by a sequence $w_k^n\in\testv(\X)$ with $|w_k^n|\le 1+\delta$. By a diagonal argument $v$ is in $L^2(T\X)$-closure of $\{(1+\delta)w_k^n\}_{k,n}$. This concludes the proof (up to decreasing $\delta$).
	\end{proof}

	Throughout the paper we will need the following version of the coarea formula (cf.\  \cite{Miranda03}).
	\begin{theorem}[Coarea formula]\label{thm:coarea}
		Let $(\X,\sfd,\mea)$ be an $\RCD(K,\infty)$ metric measure space and let $u \in \W_\loc(\X)$. Then $\{u<t\}$ is of locally finite perimeter for a.e.\ $t \in \rr$ and for every Borel function $f: \X\to [-\infty,\infty]$ such that $f \in L^1(|\d u|\mea)$ and all bounded Borel functions $\phi:\rr\to \rr $ it holds
		\begin{equation}\label{eq:coarea main}
			\int \phi(u)f |\d u| \,\d\mea =\int_\rr \phi(t) \int f \d \Per(\{u<t\}) \d t.
		\end{equation}
	\end{theorem}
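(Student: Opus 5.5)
The plan is to reduce the statement to the coarea formula for Lipschitz functions on metric measure spaces (Miranda's BV coarea formula, \cite{Miranda03}) together with the identification of the minimal weak upper gradient with the total variation density. First I would localize. Since $u\in\W_\loc(\X)$, I would take an exhaustion by balls $B_R(x_0)$ and Lipschitz cutoffs, and reduce to $u\in\W(\X)$ with bounded support. This is harmless because the perimeter measures $\Per(\{u<t\},\cdot)$ are local: they coincide on $B_R$ for $u$ and for the cut-off function outside a null set of $t$. For such $u$, the space $\RCD(K,\infty)$ is infinitesimally Hilbertian and $\W$ functions are $\BV$ with $|Du|=|\d u|\mm$. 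This identification holds in $\RCD$ spaces, and more generally in PI-type or Hilbertian settings via the equivalence of the $W^{1,1}$ and $\BV$ relaxations, using Ambrosio--Di Marino. Miranda's coarea formula then gives $|Du|(A)=\int_\rr \Per(\{u<t\},A)\,\d t$ for every Borel set $A$, with $\{u<t\}$ of finite perimeter for a.e.\ $t$.

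Second, I would upgrade from Borel sets to functions. Taking $f=\nchi_A$ and $\phi\equiv 1$ gives \eqref{eq:coarea main}. Then I would extend by linearity to simple functions and by monotone convergence to nonnegative Borel $f$, obtaining $\int f|\d u|\,\d\mm=\int_\rr\int f\,\d\Per(\{u<t\})\,\d t$. For $f\in L^1(|\d u|\mm)$ I would split $f=f^+-f^-$; both sides are then finite, and $\int f\,\d\Per(\{u<t\})$ is finite for a.e.\ $t$.

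Third, I would insert $\phi(u)$. For this I need that the measure $\Per(\{u<t\})$ is concentrated on $\{{\sf QCR}(u)=t\}$, or at least that $\phi(u)$ may be replaced by $\phi(t)$ inside the $t$-integral. To obtain this, I would apply the nonnegative-$f$ formula with $f=\nchi_{\{u\in I\}}g$ for intervals $I$. Comparing with the coarea formula for the truncated function $\min(\max(u,a),b)$, whose differential is $\nchi_{\{a<u<b\}}\d u$ by locality, gives
\[
\int_{\{a< u< b\}} g\,|\d u|\,\d\mm=\int_a^b\int g\,\d\Per(\{u<t\})\,\d t .
\]
Hence the identity holds for $\phi=\nchi_{(a,b)}$. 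It then extends to bounded Borel $\phi$ by a monotone class argument, which is legitimate since both sides are dominated thanks to $f\in L^1(|\d u|\mm)$.

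The main obstacle is the first step: the identification $|Du|=|\d u|\mm$ for Sobolev functions. One also has to verify that the truncation levels $a,b$ can be chosen outside the null set where perimeters fail to be finite or the truncation identity fails. I would handle this by approximating general $a,b$ with good levels and using dominated convergence.
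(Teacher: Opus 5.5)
Your proposal is correct, but it proves by hand what the paper imports from the literature.

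\emph{The paper's route.} It makes the same reductions you do: $f,\phi\ge0$; replacing $f$ by $f\nchi_{B_n}$ and using monotone convergence; replacing $u$ by $\eta u\in\BV(\X)$ with $\eta=1$ on the support of $f$. It then applies the Sobolev coarea formula \cite[Theorem 2.5]{wu2025almost}, which already comes in the $u$-localized form $\int_{\{u>s\}}f|\d u|\,\d\mm=\int_s^\infty\int f\,\d\Per(\{u<t\})\,\d t$. Subtracting two such identities gives the case where $\phi$ is the indicator of an interval. Dominated convergence then handles all bounded Borel $\phi$, exactly as in your last step.

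\emph{Your route.} You derive that localized identity yourself from three ingredients: the set-wise $\BV$ coarea formula, the identification $|Du|=|\d u|\mm$, and the truncation $u_{a,b}\coloneqq\min(\max(u,a),b)$. The truncation works because $|\d u_{a,b}|=\nchi_{\{a<u<b\}}|\d u|$ and $\{u_{a,b}<t\}=\{u<t\}$ for $t\in(a,b]$. The paper's argument is a few lines long. Yours makes visible that the $\RCD$ assumption enters only through $|Du|=|\d u|\mm$; everything else holds in any metric measure space.

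Some of your justifications need adjusting.
\begin{itemize}
\item The identification $|Du|=|\d u|\mm$ does not follow from Ambrosio--Di Marino's equivalence of relaxations; that only yields $|Du|\le|\d u|\mm$. It is also false in general PI (even infinitesimally Hilbertian) spaces, where only a comparison up to constants holds. In $\RCD(K,\infty)$ the missing inequality $|\d u|\mm\le|Du|$ comes from the Bakry--\'Emery estimate for $\BV$ functions, as in Gigli--Han's proof that weak gradients on $\RCD$ spaces are independent of the exponent. That is the result you should invoke.
\item Miranda's coarea formula is proved under doubling and Poincar\'e assumptions, which $\RCD(K,\infty)$ spaces need not satisfy. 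You need its version valid in arbitrary metric measure spaces.
\item Your concern about choosing good truncation levels is unfounded. For every $t\in(a,b]$ the set $\{u_{a,b}<t\}$ equals $\{u<t\}$, and otherwise it is $\emptyset$ or $\X$. So the interval identity holds for all $a<b$. When $\mm(\X)=\infty$, subtract the constant $a$ or $b$, or argue locally, to keep $u_{a,b}$ in $\BV$.
\end{itemize}
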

	\begin{proof}
		Since $\W_\loc(\X)\subset BV_\loc(\X)$ we have that $\{u<t\}$ is of locally finite perimeter for a.e.\ $t \in \rr$.
		We prove the statement for $\phi\ge0 $ and $f\ge0 $, the general case follows taking the positive and negative parts. We can also assume that $f$ has bounded support, otherwise we take $f_n\coloneqq f \nchi_{B_n(x_0)}$ and then pass to the limit by  monotone convergence. Finally we can assume that $u \in \BV(\X)$ by replacing $u$ by $\eta u$ with $\eta \in \LIP_{bs}(\X)$ with $\eta=1$ in the support of $f.$   The coarea formula in \cite[Theorem 2.5]{wu2025almost} then asserts that 
		\begin{equation}
			\int_{\{u>s\}} f |\d u|\d \mm=\int_s^\infty \int f\, \d \pert\, \d t
		\end{equation}
		Hence, since $f|\d u|\in L^1(\mm)$ by assumption,
		\begin{equation}
			\int_{\{s_1<u<s_2\}} f |\d u|\d \mm=\int_{s_1}^{s_2} \int f \d \pert\, \d t, \quad \text{for all $s_1<s_2$}.
		\end{equation}
		In other words \eqref{eq:coarea main} holds with $\phi=\nchi_{[s_1,s_2]}.$ In particular it holds when $\phi$  is a linear combination of such functions.  The statement for a general bounded $\phi$ follows by density using the dominated convergence theorem.
	\end{proof}

	\section{Willmore functional and mean curvature}\label{sec:tandiv and wil}
	Throughout this section $\Xdm$ is an $\RCD(K,\infty)$ space.
	\subsection{Definition of tangential divergence}
	For all $u \in \W(\X)$  we define the co-vector field $e_u \in L^0(T^*\X)$ as
	\[
	e_u\coloneqq \nchi_{\{|\d u|>0\}} \frac{\d u}{|\d u|}.
	\]
	Clearly $|e_u|\le 1$ $\mm$-a.e.

	\begin{definition}[Tangential divergence]\label{def:tandiv}
For all $u\in \W(\X)$ and $v \in \W_C\cap L^\infty (T\X)\cap \dom(\div)$  we define the \textit{tangential divergence} function $\TD(v)\in L^1(|\d u|\mm)$ as
		\[
		\TD(v)\coloneqq \div (v)-\nabla v \left(e_u,e_u\right).
		\]
	\end{definition}
    In particular $\TD(v)$ is defined for all $v\in \testv(\X).$
	The dependence on $u$ is omitted as it always will be clear from context. Note that $\TD(v) \in  L^1(|\d u|\mm)$ because $\{\div(v),\,|\nabla v|,\,|\d u|\}\subset  L^2(\mm)$ and $|e_u|\le 1.$

	In the smooth setting  $\TD(v)$ coincides pointwise on every regular level  set $\{u=t\}$ with the usual tangential divergence of $v$ with respect to $\{u=t\}.$ 
	
	Heuristically, the tangential divergence should depend only on the restriction of the vector field $v$ on the level set $\{u=t\}$. The following simple but crucial result makes this precise in our setting. 
	\begin{prop}\label{prop:cutoff trick}
		For all $v \in \W_C\cap L^\infty (T\X)\cap \dom(\div)$, $u \in \W(\X)$ and all $\phi\in \LIP\cap L^\infty(\rr)$ it holds that $\phi(u)v \in \W_C\cap L^\infty (T\X)\cap \dom(\div)$ and 
		\begin{equation}\label{eq:cutoff trick}
			\TD(\phi(u)v)=\phi(u)\TD(v).
		\end{equation}
	\end{prop}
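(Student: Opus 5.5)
Since $\phi\in\LIP\cap L^\infty(\rr)$ and $u\in\W(\X)$, the function $\phi(u)$ is bounded with $|\d \phi(u)|\le \Lip(\phi)|\d u|\in L^2(\mm)$, so the plan is to reduce everything to two Leibniz rules and then observe that the correction terms cancel. The product rule for the covariant derivative gives that for $v\in \W_C\cap L^\infty(T\X)$ and $f\in L^\infty\cap \W(\X)$ (or $f$ bounded with $|\d f|\in L^2$, by a localization/approximation argument) the field $fv$ belongs to $\W_C(T\X)$ with
\[
\nabla(fv)=\nabla f\otimes v + f\nabla v ,
\]
where the term $\nabla f\otimes v$ is in $L^2$ because $|v|\in L^\infty$. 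Similarly, since $v\in \dom(\div)$, we have $fv\in\dom(\div)$ with $\div(fv)=\la \nabla f,v\ra+f\div(v)$, again with the extra term in $L^2$ thanks to $v\in L^\infty$. Obviously $\phi(u)v\in L^\infty(T\X)$. Applying these with $f=\phi(u)$ gives the first claim. To cover the case $\phi(u)\notin L^2(\mm)$ (if $\phi(0)\neq 0$), I would write $\phi(u)=\phi(0)+(\phi(u)-\phi(0))$, where the second summand is in $\W(\X)$, and use linearity.

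For identity \eqref{eq:cutoff trick}, I would use the chain rule $\d\phi(u)=\phi'(u)\d u$ (valid $\mm$-a.e., with the convention that it vanishes on $\{|\d u|=0\}$ and where $\phi'(u)$ is meaningful by the locality of the differential, i.e.\ $\d u=0$ a.e.\ on $u^{-1}(N)$ for $\mathcal L^1$-null $N$). Substituting into the two Leibniz formulas,
\[
\TD(\phi(u)v)=\phi'(u)\la \nabla u,v\ra+\phi(u)\div v-\phi'(u)\,(\nabla u\otimes v)(e_u,e_u)-\phi(u)\nabla v(e_u,e_u).
\]
Now $(\nabla u\otimes v)(e_u,e_u)=\la \d u,e_u\ra\, \la v,e_u\ra=|\d u|\,\la v,e_u\ra=\la \d u,v\ra$ on $\{|\d u|>0\}$, while on $\{|\d u|=0\}$ both extra terms vanish. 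Hence the terms involving $\phi'(u)$ cancel, leaving $\phi(u)\TD(v)$ $\mm$-a.e. (in particular $|\d u|\mm$-a.e.).

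The main obstacle is the justification of the Leibniz rule for $\nabla(fv)$ when $f$ is merely in $\W\cap L^\infty$ rather than a test function. The standard calculus results give this for $f$ in a more regular class, so I would approximate $\phi(u)$ by test functions $f_n$ (e.g.\ via the heat flow) with $f_n\to\phi(u)$ in $\W$ and $\|f_n\|_\infty$ bounded. Then $\nabla(f_nv)=\nabla f_n\otimes v+f_n\nabla v$ converges in $L^2$ (dominated convergence for the second term, up to a subsequence with a.e.\ convergence), and the closure of the covariant derivative $\nabla$ on $\W_C$ gives the conclusion. The divergence is handled identically, using that the graph of $\div$ is closed in the appropriate weak sense.
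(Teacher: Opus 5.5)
Your proposal is correct and follows the same route as the paper. Both apply the Leibniz rules for $\div$ and $\nabla$ with $f=\phi(u)$ and the chain rule $\d\phi(u)=\phi'(u)\d u$, then use the cancellation $(\nabla u\otimes v)(e_u,e_u)=|\d u|\la v,e_u\ra=\d u(v)$. The extra care you take (splitting off $\phi(0)$ when $\phi(u)\notin L^2$, invoking locality to give meaning to $\phi'(u)$, and the approximation/closure argument for the Leibniz rule with $f\in \W\cap L^\infty$) supplies details that the paper simply takes from the standard calculus rules.
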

	\begin{proof}
		The fact that $\phi(u)v \in \W_C\cap L^\infty (T\X)\cap \dom(\div)$ follows by the Leibniz rule for the divergence and the covariant derivative, which also provide the following identities:
		\begin{align*}
			&\div(\phi(u)v)=\phi'(u)\d u(v)+\phi(u)\div(v)\\
			&\nabla (\phi(u)v)=\phi'(u)\nabla u \otimes v + \phi(u) \nabla v.
		\end{align*}
		Therefore
		\begin{align*}
			\TD(\phi(u)v)&=\div(\phi(u)v)-\nabla (\phi(u)v)(e_u,e_u)\\
			&=\phi'(u)\d u(v)+\phi(u)\div(v) - \phi'(u)(\nabla u \otimes v)(e_u,e_u) - \phi(u) \nabla v(e_u,e_u).
		\end{align*}
		The conclusion follows observing that the terms with $\phi'$ simplify because
		\[
		( \nabla u \otimes v)(e_u,e_u)=  \la\nabla u,\e_u\ra \la v,e_u\ra =|\d u|  \la v,e_u\ra=
		\la v,\nabla u\ra=\d u(v).
		\]
	\end{proof}

	\subsection{The Willmore functional}\label{sec:willmore functional}
	
	Our goal is to give a meaning to the quantity
	\begin{equation}\label{eq:formal willmore functional}
		\wil(u)=\frac12\int_\rr \int_{\{u=t\}}|H_t|^2 \d \sigma\d t,
	\end{equation}
	where $H_t$ is the mean curvature of the level set $\{u=t\}$. However, we have yet to introduce the concept of mean curvature, in fact this is one of our main objectives. Our strategy instead is to argue via duality with the tangential divergence, which we defined in the previous section. To explain our approach, recall that in the smooth setting for all sufficiently regular level sets 	and all sufficiently smooth vector fields $v$ it holds
	\begin{equation}\label{eq:formal int by parts}
		\int_{\{u=t\}} \la H_t ,v\ra  \d \sigma =\int_{\{u=t\}} \div_T(v)\d \sigma,
	\end{equation}
	where $ \div_T(v)$ is the tangential divergence in the classical sense. 
This yields the following (formal) identity
\begin{equation}
	\frac12\int_{\{u=t\}}|H_t|^2 \d \sigma=\sup_v 	\int_{\{u=t\}} \la H_t ,v\ra-\frac12|v|^2  \d \sigma=\sup_v \int_{\{u=t\}} \div_T(v)-\frac12|v|^2\d \sigma.
	\end{equation}
Integrating the above identity motivates the following definition.
	\begin{definition}[Willmore functional]\label{def:willmore functional}
		For all $E\subset \rr$ Borel we define the map $\wil_E: \W(\X)\to [0,+\infty] $ as
		\begin{equation}\label{eq:willmorefunctional}
			\wil_E(u)\coloneqq \sup_{v \in \testv(\X)}	\int_{u^{-1}(E)} \left(\TD(v)-\frac{|v|^2}{2}\right) |\d u|  \d\mea\ge0.
		\end{equation}
		We also set $\wil(u)\coloneqq \wil_\rr(u).$
	\end{definition}
	The integral in \eqref{eq:willmorefunctional} makes sense  and it is finite since $\TD(v)\in L^1(|\d u|\mm)$ and $|v|\in L^\infty\cap L^2(\mm).$ 

    Observe that this definition depends only on the tangential divergence and therefore makes sense even if we have not yet introduced any notion of mean curvature.
	We also denote by $\dom(\wil_E)$ the domain of finiteness of $\wil_E$, that is 
	\begin{equation}\label{def:domwil}
		\dom(\wil_E)\coloneqq\{u \in \W(\X) \ : \wil_E(u)<+\infty \}
	\end{equation}
	and for brevity we set $\Dwil\coloneqq \dom(\wil_\rr).$
	At this stage it is not clear whether $\Dwil$ contains non-trivial functions, however we will  prove later  that $\Dwil$ is in fact dense in $L^1(\mm)$ (see Theorem \ref{thm:domain dense}).	Additionally we will prove that a version of identities \eqref{eq:formal willmore functional} and \eqref{eq:formal int by parts} hold also in our setting.

	\subsection{Mean curvature}
	In this part we will show that for functions satisfying $\wil(u)<\infty$, there is a meaningful notion of mean curvature vector $\Hu(u)$ for a.e.\ level set $\{u=t\}.$
	By meaningful we mean that it satisfies the expected integration by parts formula with respect to the tangential divergence and represents the Willmore functional via integration.
    
	 As before, throughout this section  $\Xdm$ is an $\RCD(K,\infty)$ space.  First, we need to introduce a suitable space where the mean curvature vector will live.
	
	$$L^2(T\X;|\d u|\mm)\coloneqq \left\{v \in L^0(T\X) \ : \  |v|\in L^2(|\d u|\mm) \right\}/ \sim,$$
	where $\sim$ denotes the equivalence $|\d u|\mm$-a.e. 
	We endow $L^2(T\X;|\d u|\mm)$ with the norm $\|v\|_{L^2(T\X;|\d u|\mm)}\coloneqq \||v|\|_{L^2(|\d u|\mm)}$.
	Taking the quotient $|\d u|\mm$-a.e.\ is the natural choice because we formally expect to define the mean curvature only on regular level sets, that is where $|\d u|\neq 0$. In particular this set should be interpreted as vector fields defined only along the regular part of the level sets.

	\begin{prop}\label{prop:hilbert}
		
		$\big(L^2(T\X;|\d u|\mm),\|\cdot \|_{L^2(T\X;|\d u|\mm)}\big)$   is a separable Hilbert space and  (the equivalence classes of) vector fields in  $\testv(\X)$ form a dense set. 
	\end{prop}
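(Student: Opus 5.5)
The space $L^2(T\X;|\d u|\mm)$ is the $L^2$-section space of the tangent module with respect to a weighted measure. I will realize it as a closed subspace (indeed a quotient) of a standard object and transfer completeness, the Hilbert structure and density from $L^2(T\X)$.

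\textbf{Hilbert structure and completeness.} Since $\X$ is $\RCD(K,\infty)$, it is infinitesimally Hilbertian. Hence the pointwise norm on $L^0(T\X)$ satisfies the parallelogram identity $\mm$-a.e., and therefore $|\d u|\mm$-a.e. Integrating against $|\d u|\mm$, the norm $\|\cdot\|_{L^2(T\X;|\d u|\mm)}$ satisfies the parallelogram law. It is induced by the scalar product $\int\la v,w\ra|\d u|\,\d\mm$, which is well defined on equivalence classes.

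For completeness, take a Cauchy sequence $(v_n)_n$ and extract a subsequence with $\sum_n\|v_{n+1}-v_n\|<\infty$. Then $\sum_n|v_{n+1}-v_n|<\infty$ holds $|\d u|\mm$-a.e., so $(v_n)_n$ is Cauchy in $L^0(T\X)$ restricted to $\{|\d u|>0\}$. Since $L^0(T\X)$ is complete with respect to convergence in measure on sets of finite measure, the sequence has a limit $v$. By Fatou's lemma, $v\in L^2(T\X;|\d u|\mm)$ and $v_n\to v$ in this space. Alternatively, I can note that $v\mapsto |\d u|^{1/2}v$ is an isometry onto the closed subspace $\{w\in L^2(T\X): w=0 \ \mm\text{-a.e. on } \{|\d u|=0\}\}$ of $L^2(T\X)$. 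Either route gives both completeness and the Hilbert structure, and I will use whichever is cleaner.

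\textbf{Density of $\testv(\X)$.} This is the main step. Fix $v$ with $|v|\in L^2(|\d u|\mm)$. The plan is to reduce to bounded, truncated fields and then apply Lemma \ref{lem:approximation with bounded testv}.

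First, the truncations $v_R:=\nchi_{\{|v|\le R\}\cap B_R(x_0)}v$ converge to $v$ in the weighted norm by dominated convergence. So I may assume $|v|\le R$ and that $v$ has bounded support. Then $v\in L^2(T\X)$, because the support has finite measure: $\RCD(K,\infty)$ spaces have balls of finite measure.

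Next, apply Lemma \ref{lem:approximation with bounded testv} to $v/R$ with $\delta=1$. This gives $w_n\in\testv(\X)$ with $|w_n|\le 2R$ and $w_n\to v$ in $L^2(T\X)$. Along a subsequence, $|w_n-v|\to0$ holds $\mm$-a.e. The uniform bound gives $|w_n-v|^2|\d u|\le 9R^2|\d u|$.

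The remaining obstacle is integrability of this dominating function: $|\d u|\in L^2(\mm)$ but not necessarily $L^1(\mm)$. To handle it, I localize the approximation as well. Multiply $w_n$ by a cutoff $\eta\in\test(\X)$ with $0\le\eta\le1$ and $\eta=1$ on $B_R(x_0)$; such cutoffs exist in $\RCD$ spaces. The product $\eta w_n$ still lies in $\testv(\X)$, since $\test(\X)$ is an algebra. Now $|\eta w_n-v|^2|\d u|\le 9R^2\nchi_{\supp\eta}|\d u|$, and this is integrable because $\supp\eta$ is contained in a ball of finite measure and $|\d u|\in L^2(\mm)$.

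By dominated convergence, $\eta w_n\to\eta v=v$ in $L^2(T\X;|\d u|\mm)$. A diagonal argument across the truncation level $R$ then gives density. If compactly supported test cutoffs are unavailable, I would instead use $\mm$-a.e. convergence together with Vitali's theorem on $\{|\d u|>\epsilon\}$, which has finite measure by Chebyshev, plus a small tail estimate on $\{|\d u|\le\epsilon\}$ using the same cutoff.

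\textbf{Separability.} This follows from density. $L^2(T\X)$ is separable, since $\X$ is separable and the module is countably generated. Choose a countable family in $\testv(\X)$ dense in $L^2(T\X)$ and closed under multiplication by countably many cutoffs $\eta_k$. Running the density argument with approximants taken from this family shows that it is dense in $L^2(T\X;|\d u|\mm)$ as well.
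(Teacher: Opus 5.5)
The separability paragraph does not work as written. Your density argument succeeds only because the approximants from Lemma \ref{lem:approximation with bounded testv} satisfy the uniform bound $|w_n|\le 2R$. A countable family $\mathcal D\subset\testv(\X)$ that is merely dense in $L^2(T\X)$ gives approximants with no such bound. Since $|\d u|$ is unbounded, $L^2(T\X)$-convergence alone does not imply convergence in $L^2(T\X;|\d u|\mm)$. Multiplying by cutoffs does not help either, because there is still no integrable dominating function. The repair is already in your completeness paragraph: $v\mapsto|\d u|^{1/2}v$ is an isometry of $L^2(T\X;|\d u|\mm)$ into the separable space $L^2(T\X)$, and subspaces of separable metric spaces are separable. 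Alternatively, for each $R\in\nn$ take a countable subset of $\{w\in\testv(\X):|w|\le 2R\}$ that is $L^2(T\X)$-dense in that set. The paper instead appeals to separability of $L^0(T\X)$.

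The rest is sound. Your first completeness route is the paper's argument; the paper splits along $\{|\d u|>2^{-k}\}$ rather than using summable differences. The isometry route is a cleaner alternative that gives completeness and the Hilbert structure at once.

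For density, the ``remaining obstacle'' you identify is not really there. With $|w_n|\le 2R$ and $|v|\le R$, the paper simply estimates
\[
\int|w_n-v|^2|\d u|\,\d\mm\le\||w_n-v|\|_{L^\infty(\mm)}\,\||w_n-v|\|_{L^2(\mm)}\,\||\d u|\|_{L^2(\mm)}\to0 .
\]
So no cutoff and no spatial truncation are needed. The paper truncates with $\nchi_{\{|v|\le k\}}\nchi_{\{|\d u|>1/k\}}$, the second set having finite measure by Chebyshev.

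Your main route needs test functions $\eta\in\test(\X)$ with bounded support and $\eta=1$ on $B_R(x_0)$. This is not a standard fact in $\RCD(K,\infty)$ spaces, where balls need not be relatively compact. Your fallback on $\{|\d u|>\epsilon\}$ is fine, and the tail needs no cutoff, since $\int_{\{|\d u|\le\epsilon\}}|w_n-v|^2|\d u|\,\d\mm\le\epsilon\|w_n-v\|^2_{L^2(T\X)}$.
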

	\begin{proof}
		From the definitions it follows that $L^2(T\X;|\d u|\mm)$ is a vector space  and $\|\cdot\|_{L^2(T\X;|\d u|\mm)}$ is a norm. Moreover $\|\cdot\|_{L^2(T\X;|\d u|\mm)}$   satisfies the parallelogram identity because the pointwise norm $|\cdot|$ in $L^0(T\X)$ does. We show completeness. Suppose that $v_n$ is a Cauchy sequence  for  $\|\cdot\|_{L^2(T\X;|\d u|\mm)}$ (where $v_n$ denotes also a choice of representative in $L^0(T\X)$). Then for all $k\in \mathbb Z$ the sequence $v_n^k\coloneqq \nchi_{\{|\d u|>1/2^k\}} v_n$ is Cauchy in $L^0(T\X)$ and thus converges to some $v^k\in L^0(T\X)$ as $n\to +\infty.$ We define $v\coloneqq \sum_{k \in \mathbb Z} \nchi_{\{1/2^{k-1}\ge |\d u|>1/2^k\}}v^k \in L^0(T\X)$. Up to passing to a subsequence, we have $|v-v_n|\to 0$ $\mm$-a.e. Then by Fatou's lemma 
		$$\int |v_n-v|^2 |\d u| \d\mm\le \liminf_{m\to +\infty} \int  |v_n-v_m|^2 |\d u| \d\mm\le \eps,$$ 
		for  all $n$ big enough. This shows  both  $|v|\in L^2(|\d u|\mm)$ and $v_n \to v$ in  $L^2(T\X;|\d u|\mm)$.  We pass to the density part. Fix $v \in L^2(T\X;|\d u|\mm)$. Since $v\nchi_{\{|v|\le k\}}\nchi_{\{|\d u|> 1/k\}}\to v$ in $L^2(T\X;|\d u|\mm)$ as $k\to +\infty$ we can assume that  $|v|\in L^\infty\cap L^2(\mm)$ and in particular that $|v|\le 1.$ By Lemma \ref{lem:approximation with bounded testv} there exists a sequence $v_n\in \testv(\X)$ such that  $v_n\to   v$ in $L^2(T\X)$ and $|v_n|\le 2.$ Since $|v_n-v|\to 0$ in $L^2(\mm)$ and $|v_n-v|\le 3$, by the dominated convergence theorem we deduce
		\begin{align*}
	 \int |v_n -v|^2 |\d u|\d \mm\le \||v_n-v|\|_{L^\infty(\mm)} \||v_n-v|\|_{L^2(\mm)} \||\d u|\|_{L^2(\mm)}\rightarrow 0, 
		\end{align*}
		which is what we wanted. The separability follows from the separability of $L^0(T\X)$ (see e.g.\ \cite[(1.3.4)]{Gigli14}).
	\end{proof}

	Below we restate our main result Theorem \ref{thm:main H intro}   on the mean curvature. The existence will be given by an application of the Riesz representation theorem, while some work will be required to obtain \eqref{eq:mean curv eq on level sets} on level sets and the explicit expression \eqref{eq:H formula}.
	\begin{prop}\label{prop:main H} 
		For all $u \in \Dwil$ there exists a unique $\Hu(u) \in L^2(T\X;|\d u|\mm)$, called mean curvature vector, such that for all $v\in \testv(\X)$
		\begin{equation}\label{eq:mean curv eq on level sets}
			\int \la \Hu(u),v\ra \,\d \pert =\int \TD(v)\,\d \pert, \quad \text{for a.e.\ $t \in \rr$.}
		\end{equation}
        Moreover for all $E\subset\rr$ Borel it holds
        \begin{equation}\label{eq:wil integral formula}
            	\wil_E(u)=\frac12\int_{u^{-1}(E)} |\Hu(u)|^2 |\d u|\d \mm=\frac12\int_E \int |\Hu(u)|^2\d \pert\, \d t.
        \end{equation}
        Finally, if $u\in \dom(\Delta)\cap \LIP(\X)$, then $u\in \dom(\wil)$ if and only if 
 \begin{equation}\label{eq:H formula}
     H(u)\coloneqq \left(\frac{\Delta u}{|\d u|} -\frac{\la \nabla |\d u|,\nabla u \ra }{|\d u|^2}\right)\in L^2(|\d u|\mm),
 \end{equation}
in which case $\Hu (u)=H(u)e_u.$
	\end{prop}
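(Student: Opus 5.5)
The argument splits into three parts: Riesz representation for existence, localization to level sets via Proposition \ref{prop:cutoff trick} and the coarea formula, and an explicit computation of $\TD$ in the smooth case.

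\textbf{Existence via Riesz.} Consider the linear functional $L(v)\coloneqq \int \TD(v)\,|\d u|\,\d\mm$ on $\testv(\X)$. Replacing $v$ by $\lambda v$ in the definition of $\wil(u)$ gives
\[
\lambda L(v)-\tfrac{\lambda^2}{2}\|v\|^2_{L^2(T\X;|\d u|\mm)}\le \wil(u)\qquad\text{for all }\lambda\in\rr .
\]
Optimizing in $\lambda$ yields $|L(v)|\le \sqrt{2\wil(u)}\,\|v\|_{L^2(T\X;|\d u|\mm)}$. This shows that $L$ is well defined on equivalence classes and bounded. Since $\testv(\X)$ is dense by Proposition \ref{prop:hilbert}, $L$ extends uniquely to the Hilbert space $L^2(T\X;|\d u|\mm)$. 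Riesz then provides $\Hu(u)$ with $L(v)=\int\la \Hu(u),v\ra|\d u|\,\d\mm$.

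\textbf{Localization to level sets.} Apply the identity above to $\phi(u)v$ with $\phi\in\LIP\cap L^\infty(\rr)$. This is legitimate once we check that $\phi(u)v$ lies in the closure of test fields in the required sense, or that the Riesz identity extends to $\W_C\cap L^\infty\cap\dom(\div)$ fields by approximation; the latter is a technical point to verify. By Proposition \ref{prop:cutoff trick},
\[
\int \phi(u)\big(\TD(v)-\la\Hu(u),v\ra\big)|\d u|\,\d\mm=0 .
\]
The coarea formula (Theorem \ref{thm:coarea}) then gives $\int_\rr\phi(t)\, g_v(t)\,\d t=0$, where $g_v(t)=\int(\TD(v)-\la\Hu(u),v\ra)\,\d\pert$. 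Hence $g_v=0$ for a.e.\ $t$, which is \eqref{eq:mean curv eq on level sets}. The a.e.\ set may depend on $v$; this is harmless since the statement quantifies $v$ first. Uniqueness follows from the density in Proposition \ref{prop:hilbert}, because integrating \eqref{eq:mean curv eq on level sets} in $t$ recovers the Riesz identity.

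For \eqref{eq:wil integral formula}, repeat the same computation with $\nchi_E(u)$ in place of $\phi(u)$: the coarea formula allows bounded Borel $\phi$ after integrating. This gives
\[
\wil_E(u)=\sup_v\int_{u^{-1}(E)}\Big(\la\Hu,v\ra-\tfrac{|v|^2}{2}\Big)|\d u|\,\d\mm .
\]
The supremum equals $\tfrac12\int_{u^{-1}(E)}|\Hu|^2|\d u|\,\d\mm$: the upper bound is pointwise, and the lower bound uses density to approximate $\Hu$. The second equality in \eqref{eq:wil integral formula} is again coarea. One subtlety is that $\wil_E(u)<\infty$ needs $u\in\Dwil$, which holds since $\wil_E\le\wil$.

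\textbf{Explicit formula.} This is the main obstacle. For $u\in\dom(\Delta)\cap\LIP(\X)$ we want
\[
\int\TD(v)|\d u|\,\d\mm=\int H(u)\la e_u,v\ra|\d u|\,\d\mm ,
\]
at least when $H(u)\in L^2(|\d u|\mm)$; conversely, finiteness of $\wil$ should force $H\in L^2$.

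Formally, $\div(v)|\d u|=\div(v|\d u|)-\la\nabla|\d u|,v\ra$, and
\[
\nabla v(e_u,e_u)|\d u|=\frac{\la\nabla_{\nabla u}v,\nabla u\ra}{|\d u|}=\la\nabla\la v,\nabla u\ra,e_u\ra-\frac{\mathrm{Hess}\,u(v,\nabla u)}{|\d u|}.
\]
Combining this with $\la\nabla|\d u|,v\ra=\mathrm{Hess}\,u(e_u,v)$ on $\{|\d u|>0\}$, the Hessian terms cancel. What remains is $\div(v|\d u|)-\div(\la v,\nabla u\ra e_u)$ plus $\la v,\nabla u\ra\,\div(e_u)$, and the latter formally equals $H\la v,e_u\ra|\d u|$.

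The difficulty is that $|\d u|$ may vanish and $e_u$ is singular, so the computation must be regularized. I would use $\eps$-regularizations $|\d u|_\eps=\sqrt{|\d u|^2+\eps^2}$, using $|\d u|\in\W_{\loc}$ for $u\in\dom(\Delta)$ (via Bochner: $|\d u|^2\in\W^{1,1}$ and $\mathrm{Hess}\,u\in L^2$). I would integrate by parts using $\Delta u$, then pass $\eps\to0$ by dominated convergence. The converse direction (finite $\wil$ implies $H\in L^2$) would test with truncations of $H e_u$ localized on $\{|\d u|>\delta\}$. I expect the careful cancellation and limit near the critical set to be the hardest step.
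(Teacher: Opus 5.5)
Your proposal is correct and follows essentially the paper's route: the scaling bound plus Riesz representation (Lemma~\ref{lem:int tan div bounded} and Proposition~\ref{prop:H first existence}), Proposition~\ref{prop:cutoff trick} plus coarea for \eqref{eq:mean curv eq on level sets}, density for \eqref{eq:wil integral formula}, and an $\eps$-regularized integration by parts as in Lemma~\ref{lem:int by parts}. That identity holds for every $v\in\testv(\X)$ without assuming $H(u)\in L^2(|\d u|\mm)$, because $|H(u)||\d u|\le|\Delta u|+|\nabla|\d u||\in L^2(\mm)$, and this unconditional form is exactly what the ``only if'' direction needs, whether via your truncations or via uniqueness of $\Hu(u)$ as in the paper. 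The step you left open is closed by your first option, as in the paper: for $\phi(0)=0$ take uniformly bounded $f_n\in\test(\X)$ with $f_n\to\phi(u)$ in $\W(\X)$, so that $f_nv\in\testv(\X)$ and $\TD(f_nv)=f_n\TD(v)+\la\nabla f_n,v-\la v,e_u\ra e_u\ra$, where the last term converges in $L^1(|\d u|\mm)$ to $\la\nabla\phi(u),v-\la v,e_u\ra e_u\ra=0$; your second option, extending the identity to all of $\W_C\cap L^\infty(T\X)\cap\dom(\div)$, does not come for free.
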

Let us stress that in the second part of the statement the function $H(u)$ represents the signed scalar part of the mean curvature, while  $\Hu(u)$ is the vector-valued mean curvature, indeed recall that $|e_u|=1$ $|\d u|\mm$-a.e..

We do not  know however whether $\Hu(u)$ is parallel to $e_u$, that is $\Hu(u)=H(u) e_u$ holds for some function $H(u)$, without the assumption that $u\in \dom(\Delta).$

For the proof of Proposition \ref{prop:main H} we will need  some preliminary results.

	We start introducing an auxiliary  \textit{integral of the tangential divergence}  functional. For all $u\in \W(\X)$  we define the linear map $\Int: \testv(\X)\to \rr$  by
	\begin{equation}
		\Int(v)\coloneqq \int \TD(v) |\d u|\,\d\mm,
	\end{equation}
	which is well defined because $\TD(v)\in L^1(|\d u|\mm)$ by definition.
	
	\begin{lemma}[Boundedness of $\Int$]\label{lem:int tan div bounded} 
		For all $u \in \W(\X)$ it holds
		\begin{equation}
			|  \Int(v)|\le \|v\|_{L^2(T\X;|\d u|\mm)} \sqrt{2\wil(u)}, \quad \forall\,  v \in \testv(\X).
		\end{equation}
	\end{lemma}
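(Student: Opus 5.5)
The plan is the standard scaling argument that turns the quadratic penalty in the definition of $\wil$ into a linear bound. Fix $v\in\testv(\X)$ and $\lambda\in\rr$. Since $\testv(\X)$ is a vector space, $\lambda v\in\testv(\X)$. Moreover, $\TD$ is linear in $v$, because both $\div$ and $\nabla v(e_u,e_u)$ are linear. Hence $\TD(\lambda v)=\lambda\TD(v)$, and using $\lambda v$ as a competitor in Definition \ref{def:willmore functional} (with $E=\rr$) gives
\[
\lambda\,\Int(v)-\frac{\lambda^2}{2}\|v\|^2_{L^2(T\X;|\d u|\mm)}\le \wil(u),\qquad \forall\,\lambda\in\rr.
\]
Here we use that $\int |\lambda v|^2|\d u|\,\d\mm=\lambda^2\|v\|^2_{L^2(T\X;|\d u|\mm)}$, which is finite because $|v|\in L^\infty\cap L^2(\mm)$ and $|\d u|\in L^2(\mm)$.

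If $\wil(u)=+\infty$ there is nothing to prove. Assume then $\wil(u)<\infty$ and set $a\coloneqq\|v\|_{L^2(T\X;|\d u|\mm)}$. If $a=0$, the inequality reads $\lambda\Int(v)\le\wil(u)$ for every $\lambda\in\rr$. Letting $\lambda\to\pm\infty$ forces $\Int(v)=0$, which is the claim in this case.

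If $a>0$, we optimize in $\lambda$. Choosing $\lambda=\Int(v)/a^2$ yields $\Int(v)^2/(2a^2)\le\wil(u)$, that is, $|\Int(v)|\le a\sqrt{2\wil(u)}$. Equivalently, the quadratic polynomial $\frac{a^2}{2}\lambda^2-\lambda\Int(v)+\wil(u)$ is nonnegative on $\rr$, so its discriminant satisfies $\Int(v)^2-2a^2\wil(u)\le0$.

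There is no real obstacle here. The only points to check are that $\testv(\X)$ is closed under scalar multiplication, that $\TD$ is homogeneous, and that the degenerate case $a=0$ is handled separately, which the argument above does.
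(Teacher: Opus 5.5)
Your proposal is correct and follows essentially the paper's argument: you test the definition of $\wil(u)$ with rescaled copies of $v$ and optimize the resulting quadratic bound. The paper tests with $tv$ and $-tv$ for $t>0$ and minimizes $\frac1t\wil(u)+\frac t2\|v\|^2_{L^2(T\X;|\d u|\mm)}$ over $t$. You instead use $\lambda\in\rr$ and a discriminant-type optimization, handling the degenerate case $a=0$ explicitly, which the paper leaves implicit.
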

	\begin{proof}
		If $\wil(u)=+\infty$ there is nothing to prove, hence we assume that $\wil(u)<+\infty.$
		By the definition of $\wil(u)$, for all $t>0$ it holds
		\[
		\frac1t\wil(u)\ge \frac1t\int \left(\TD(tv)-\frac{|tv|^2}{2}\right) |\d u|  \d\mea=\int \left(\TD(v)-t\frac{|v|^2}{2}\right) |\d u|  \d\mea.
		\]
		Repeating the same for $-v$, we obtain that for all $t>0$
		\[
		|\Int(v)|\le \frac{1}{t}\wil(u)+t\int\frac{|v|^2}{2} |\d u|  \d\mea,
		\]
		from which the result follows by optimizing in $t.$
	\end{proof}

	\begin{prop}\label{prop:H first existence}
		If $u \in \Dwil$ then $\Int$ extends to a linear and continuous functional $\Int \in (L^2(T\X;|\d u|\mm))^*$ satisfying
		\begin{equation}\label{eq:int div norm <= wil}
			\|\Int\|\le \sqrt{2\wil(u)}.
		\end{equation}
		In particular there exists a (unique)   $\Hu(u)\in L^2(T\X;|\d u|\mm)$, called \textit{mean curvature vector}, such that 
		\begin{equation}\label{eq:mean curv def}
			\int \la \Hu(u),v\ra \,|\d u|\d \mm =\Int(v), \quad \forall v \in L^2(T\X;|\d u|\mm).
		\end{equation}
	\end{prop}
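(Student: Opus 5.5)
The argument is a density extension followed by the Riesz representation theorem, with Lemma \ref{lem:int tan div bounded} and Proposition \ref{prop:hilbert} as the main inputs.

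\textbf{Step 1: the bound on test fields.} By Lemma \ref{lem:int tan div bounded}, for every $v\in\testv(\X)$ we have
\[
|\Int(v)|\le \sqrt{2\wil(u)}\,\|v\|_{L^2(T\X;|\d u|\mm)}.
\]
This quantity is finite because $u\in\Dwil$.

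\textbf{Step 2: well-definedness on equivalence classes.} The map $\Int$ is defined on representatives $v\in\testv(\X)$, so we must check it depends only on the class of $v$ in $L^2(T\X;|\d u|\mm)$. If $v,w\in\testv(\X)$ agree $|\d u|\mm$-a.e., then $v-w\in\testv(\X)$ has zero norm in $L^2(T\X;|\d u|\mm)$. By linearity of $\Int$ and the bound of Step 1, $\Int(v)-\Int(w)=\Int(v-w)=0$. Hence $\Int$ is a well-defined linear functional on the image of $\testv(\X)$ in $L^2(T\X;|\d u|\mm)$, bounded with constant $\sqrt{2\wil(u)}$.

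\textbf{Step 3: extension by density.} By Proposition \ref{prop:hilbert}, the image of $\testv(\X)$ is dense in the Hilbert space $L^2(T\X;|\d u|\mm)$. A bounded linear functional on a dense subspace extends uniquely, by continuity, to the whole space with the same norm. This gives \eqref{eq:int div norm <= wil}.

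\textbf{Step 4: representation.} The Riesz representation theorem in the Hilbert space $L^2(T\X;|\d u|\mm)$, with the inner product $\int\la v,w\ra|\d u|\,\d\mm$ coming from the parallelogram identity, yields a unique $\Hu(u)$ satisfying \eqref{eq:mean curv def}. Uniqueness is as an element of $L^2(T\X;|\d u|\mm)$, i.e.\ $|\d u|\mm$-a.e.

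\textbf{Main subtlety.} The only point needing care is Step 2. It is handled entirely by the bound of Lemma \ref{lem:int tan div bounded} being in terms of the weighted norm. This is exactly why the quotient was taken $|\d u|\mm$-a.e. in the definition of $L^2(T\X;|\d u|\mm)$.
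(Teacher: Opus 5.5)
Your proposal is correct and follows the paper's proof: the weighted bound of Lemma \ref{lem:int tan div bounded}, density of $\testv(\X)$ from Proposition \ref{prop:hilbert}, extension by continuity, and the Riesz representation theorem. The paper leaves your Step 2 (that $\Int$ is well defined on $|\d u|\mm$-a.e.\ equivalence classes) implicit, and you handle it correctly.
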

	\begin{proof}
		Immediate consequence of Lemma \ref{lem:int tan div bounded} and the Riesz representation theorem recalling that $(L^2(T\X;|\d u|\mm),\|\cdot \|_{L^2(T\X;|\d u|\mm)})$ is a Hilbert space and $\testv(\X)$ is a dense subset by Proposition \ref{prop:hilbert}.
	\end{proof}

For the second part of Proposition \ref{prop:main H} we will need the following technical result. For later use we will state it also in local form.

\begin{lemma}\label{lem:int by parts} 
Let $\Omega\subset \X$ be open.
		Let $v \in \W_C\cap L^\infty (T\X)\cap \dom(\div)$ and let $u \in \LIP_\loc(\Omega)$ be such that $\Delta u \in L^2_\loc(\Omega)$ and  $u^{-1}(0,1)\subset \subset \Omega$.
        Then 
		\begin{equation}\label{eq:diffusetangetialdiv}
			\int_{\{0<u<1\} } \TD(v)|\d u|\d \mea=	\int_{\{0<u<1\}} \la v,e_u\ra \left( \frac{\Delta u}{|\d u|} -\frac{\la \nabla |\d u|,\nabla u \ra }{|\d u|^2}\right)\,|\d u| \d \mea,
		\end{equation}
		where the right integrand is taken to be 0 whenever $|\d u|=0.$ Moreover the same holds if $u\in \dom(\Delta)\cap \LIP(\X)$, without the assumption $u^{-1}(0,1)\subset \subset \Omega$.
	\end{lemma}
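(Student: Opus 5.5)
The plan is to integrate by parts the divergence term and rewrite the normal Hessian term, working on $\{0<u<1\}$ through the cutoff trick of Proposition~\ref{prop:cutoff trick}. The starting point is the pointwise identity $\nabla v(e_u,e_u)|\d u| = \nabla v(\nabla u, e_u)$. To localize, take $\phi_\eps\in \LIP\cap L^\infty(\rr)$ with $0\le\phi_\eps\le 1$, supported in $(0,1)$, increasing to $\nchi_{(0,1)}$. Then $\phi_\eps(u)v$ has support in $u^{-1}(\operatorname{supp}\phi_\eps)$, which is compact in $\Omega$. By Proposition~\ref{prop:cutoff trick},
\[
\int \phi_\eps(u)\TD(v)|\d u|\,\d\mm=\int \TD(\phi_\eps(u)v)|\d u|\,\d\mm .
\]
By dominated convergence (since $\TD(v)\in L^1(|\d u|\mm)$ on compact sets), it therefore suffices to prove the identity with $w=\phi_\eps(u)v$ in place of $v$ and integration over all of $\X$. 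The point is that $w$ is compactly supported inside $\Omega$, where $u$ is Lipschitz with $L^2_\loc$ Laplacian. The right-hand side passes to the limit in the same way, using $\TD(w)=\phi_\eps(u)\TD(v)$ and the linearity of the right-hand integrand in $v$.

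For such a $w$ the key steps are two integrations by parts.
\begin{enumerate}
\item[(i)] Since $w$ has compact support in $\Omega$ and $u$ is Lipschitz there, $\int \div(w)|\d u|\,\d\mm=-\int \la w,\nabla|\d u|\ra\,\d\mm$. This requires $|\d u|\in\W_\loc(\Omega)$, which holds on $\RCD$ spaces for $u$ with $\Delta u\in L^2_\loc$ via the local Bochner/Hessian estimates: $|{\rm Hess}\, u|\in L^2_\loc$ and $|\d|\d u||\le|{\rm Hess}\, u|$.
\item[(ii)] For the normal term I would use the identity $\nabla v(\nabla u,e_u)=\nabla_{\nabla u}w\cdot e_u$. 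Applying the Leibniz rule to $\la w,\nabla u\ra$ gives
\[
\d\la w,\nabla u\ra(\nabla u)=\nabla w(\nabla u,\nabla u)+{\rm Hess}\,u(w,\nabla u),
\]
so on $\{|\d u|>0\}$
\[
\nabla w(e_u,e_u)|\d u|=\frac{\la\nabla\la w,\nabla u\ra,\nabla u\ra-{\rm Hess}\,u(w,\nabla u)}{|\d u|}.
\]
Using ${\rm Hess}\,u(w,\nabla u)=\tfrac12\la w,\nabla|\d u|^2\ra=|\d u|\la w,\nabla|\d u|\ra$, the last term cancels exactly the contribution from (i).
\end{enumerate}
What remains to handle is $-\int \la\nabla\la w,\nabla u\ra,\nabla u\ra|\d u|^{-1}\,\d\mm$. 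Writing $f=\la w,e_u\ra$, so that $\la w,\nabla u\ra=f|\d u|$, this term becomes
\[
-\int\la\nabla f,\nabla u\ra\,\d\mm-\int f\frac{\la\nabla|\d u|,\nabla u\ra}{|\d u|}\,\d\mm ,
\]
and integrating the first piece by parts gives $\int f\Delta u\,\d\mm$. This yields exactly the claimed right-hand side.

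The main obstacle is the singular set $\{|\d u|=0\}$, where $e_u$ and $f$ are not Sobolev. To deal with it, I would replace $|\d u|$ by $\sqrt{|\d u|^2+\delta}$ in the normalizations. Concretely, set $f_\delta=\la w,\nabla u\ra/\sqrt{|\d u|^2+\delta}\in\W$ with compact support, carry out the computation above with $\delta>0$, and then let $\delta\to0$ by dominated convergence, using $|f_\delta|\le|w|$ and the bound $|\la\nabla|\d u|,\nabla u\ra|/|\d u|\le|{\rm Hess}\,u|\in L^2_\loc$. On $\{|\d u|=0\}$ all terms vanish in the limit: $\la\nabla|\d u|,\nabla u\ra=0$ there, and $f_\delta\Delta u\to0$ there. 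One must also check $\Delta u=0$ $\mm$-a.e.\ on $\{|\d u|=0\}$; this follows from locality of the Hessian/Laplacian ($\Delta u=\mathrm{tr}\,{\rm Hess}\,u$ in the appropriate sense, and ${\rm Hess}\,u=0$ a.e.\ where $\d u=0$), or can be circumvented by keeping $\nchi_{\{|\d u|>0\}}$ in the weights, consistent with the convention in the statement.

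For the global case $u\in\dom(\Delta)\cap\LIP(\X)$, the same computation applies with $\Omega=\X$. The cutoff is now needed only in space: replace $\phi_\eps(u)$ by a Lipschitz spatial cutoff $\eta_R$, for which $\TD(\eta_R v)=\eta_R\TD(v)+\la\nabla\eta_R,v\ra-\la\nabla\eta_R,e_u\ra\la v,e_u\ra$. The extra terms are $O(R^{-1})$ in $L^2$, so they vanish against $|\d u|\in L^2$ as $R\to\infty$, while global integrability comes from $|{\rm Hess}\,u|\in L^2(\mm)$ for $u\in\dom(\Delta)$.
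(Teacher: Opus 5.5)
Your proposal is correct and follows essentially the same route as the paper. It uses the Leibniz identity $\nabla v(\nabla u,\nabla u)=\la\nabla\la v,\nabla u\ra,\nabla u\ra-|\d u|\la\nabla|\d u|,v\ra$, integration by parts against a regularized normalization (the paper uses $(|\d u|+\eps)^{-1}$, you use $(|\d u|^2+\delta)^{-1/2}$), dominated convergence, and a level cutoff $\phi(u)$ with $\phi\to\nchi_{(0,1)}$. You encode that cutoff through Proposition~\ref{prop:cutoff trick} rather than tracking the $\phi'$ terms explicitly, and that difference is cosmetic.

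Two minor remarks. First, no information on $\Delta u$ over $\{|\d u|=0\}$ is needed, since $f_\delta$ vanishes identically there; the appeal to $\Delta u={\rm tr}\,{\rm Hess}\,u$, which is not available in this generality, can simply be dropped. Second, in the global case the spatial cutoff $\eta_R$ is superfluous, because all integrations by parts already hold on $\X$ when $u\in\dom(\Delta)\cap\LIP(\X)$. You still need the level cutoff $\phi_\eps(u)$ there, since it is what restricts the identity to $\{0<u<1\}$.
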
	
	\begin{proof}
    Note first that both the integrals in \eqref{eq:diffusetangetialdiv} make sense because $\Delta u, |\nabla |\d u||, |\d u|\in L^2_\loc(\Omega)$ and $u^{-1}(0,1)\subset \subset \Omega$ and $|v|, \TD(v)\in L^2(\mm).$ Thanks to $\Delta u \in L^2_\loc(\Omega)$ we have that $\la v,\nabla u\ra\in \W_\loc(\Omega)$ and
	\[
	\nabla v (\nabla u,\nabla u)=\la \nabla \la v,\nabla u\ra, \nabla u\ra -|\d u|  \la \nabla |\d u| ,v\ra, \quad \mea\text{-a.e.\ in $\Omega$}.
	\]
	Fix $\phi \in C^\infty_c(0,1)$. In particular $\phi(u)\in \dom(\Delta)\cap \LIP_c(\X).$ Using the above identity, integrating by parts and rearranging the terms we obtain
	\begin{align*}
		&\int_\Omega \phi(u)|\d u| \div (v)-\frac{\phi(u)}{|\d u|+\eps}\nabla v (\nabla u,\nabla u)\, \d \mea=\\
		&\int_\Omega \left(\frac{|\d u|}{|\d u|+\eps}-1\right) \left( \la \nabla u, v \ra \phi'(u)|\d u|+\la v,\nabla |\d u|\phi(u) \ra \right)+\la v,\nabla u \ra 	\left(\frac{\Delta u}{|\d u|+\eps} -\frac{\la \nabla |\d u|,\nabla u \ra }{(|\d u|+\eps)^2}\right)\phi(u)\, \d \mea.
	\end{align*} 
	We have the following $\mea$-a.e.\ bounds:
	\begin{align*}
			&\left |\frac{\nabla v (\nabla u,\nabla u) }{|\d u| +\eps}\right|\le |\nabla v|_{HS}|\d u| ,
			&& \left |\left(\frac{|\d u| }{|\d u| +\eps}-1\right) \la \nabla u, v \ra |\d u|  \right |
			\le\eps |\d u| |v|, \\
			&\left |\left(\frac{|\d u| }{|\d u| +\eps}-1\right) \la v,\nabla |\d u| \ra  \right |
			\le|\nabla |\d u| ||v|,   &&
			\left |\la v,\nabla u \ra \frac{\Delta u}{|\d u| +\eps} \right|\le |v||\Delta u|,\\ 
			& \left |\la v,\nabla u \ra\frac{\la \nabla |\d u| ,\nabla u \ra }{(|\d u| +\eps)^2}\right|\le |\nabla |\d u| |\,|v|,&&
		\end{align*}
	where all the functions on the right-hand sides are in $L^1_\loc(\Omega)$. Since $\supp(\phi(u))\subset\subset \Omega$ we can apply the dominated convergence theorem sending $\eps\to 0^+$ to obtain
	\[
	\int \phi(u)|\d u| \left(\div (v)-\nabla v \left(\frac{\nabla u}{|\d u|},\frac{\nabla u}{|\d u|}\right)\right)\, \d \mea=
	\int \phi(u) \la v,\nabla u \ra \left( \frac{\Delta u}{|\d u|} -\frac{\la \nabla |\d u|,\nabla u \ra }{|\d u|^2}\right)\, \d \mea,
	\]
	where both integrands are taken to be 0 whenever $|\d u|=0.$
	Finally, taking  a sequence $\phi_n\in C^\infty_c(\rr)$, uniformly bounded in $L^\infty$ and such that $\phi_n\to \nchi_{(0,1)}$ pointwise, plugging $\phi_n$ in the above identity, sending $n\to +\infty$ and applying dominated convergence we reach \eqref{eq:diffusetangetialdiv}.

    The second part of the statement is proved exactly in the same way, observing that all integration by parts still make sense.
	\end{proof}

	We can now proceed with the proof of the existence of the mean curvature vector.
	\begin{proof}[Proof of Proposition \ref{prop:main H}]
		Fix $u\in \Dwil$. We take $\Hu(u)$ as the one given by Proposition \ref{prop:H first existence}. By identity \eqref{eq:mean curv def} and the definition of $\mathcal I_{TV}$, it holds
        \begin{equation}\label{eq:pre int by parts}
            \int \la \Hu(u),v\ra \,|\d u|\d \mm =\int \TD(v) \,|\d u|\d \mm, \quad \forall v \in \testv(\X).
        \end{equation}
        Fix any $\phi\in \LIP_c\cap L^\infty(\rr)$ with $\phi(0)=0.$
        We would like to take $\phi(u)v$ as test vector in the above identity  and apply Proposition \ref{prop:cutoff trick}, however $\phi(u)v$ does not necessarily belong to $\testv(\X)$. Instead, we take a sequence  $f_n\in \test(\X)$ uniformly bounded and such that $f_n\to \phi(u)$ in $\W(\X)$. We claim that $\int \TD(f_nv)|\d u|\d \mm\to \int \TD(\phi(u)v)|\d u|\d \mm$. Indeed by the Leibniz rule
        \[
        \TD(f_nv)=f_n\TD(v)+\la \nabla f_n, v-\la v,e_u\ra e_u\ra
        \]
        and, since $\la \nabla \phi(u), v-\la v,e_u\ra e_u\ra=0$ $\mm$-a.e.,
        \begin{align*}
            \int |\la \nabla f_n, v-\la v,e_u\ra e_u\ra|\, |\d u|\,\d \mm&=  \int |\la \nabla (f_n- \phi(u)), v-\la v,e_u\ra e_u\ra|\, |\d u|\,\d \mm\\
            &\le 2 \||v|\|_{L^\infty(\mm)}\||\d (f_n- \phi(u))|\|_{L^2(\mm)}\||\d u|\|_{L^2(\mm)}\rightarrow 0.
        \end{align*}
        Moreover $\int f_n\TD(v)|\d u|\d \mm \to \int \phi(u)\TD(v)|\d u| \d \mm$ by the dominated convergence theorem. This shows the claim because $\TD(\phi(u)v)=\phi(u)\TD(v)$ by Proposition \ref{prop:cutoff trick}. Taking $f_n v\in \testv(\X)$ in \eqref{eq:pre int by parts} and passing to the limit we obtain 
        \begin{equation}\label{eq:triccata}
              \int \phi(u)\la \Hu(u),v\ra \,|\d u|\d \mm =\int \phi(u)\TD(v) \,|\d u|\d \mm, \quad \forall v \in \testv(\X).
        \end{equation}
		By the coarea formula in Theorem \ref{thm:coarea} we can rewrite \eqref{eq:triccata} as
		\begin{equation}
			\int_\rr \phi(t)\int \la \Hu(u),v\ra \d \pert\,\d t=\int_\rr \phi(t)\int \TD(v)\d \pert\,\d t, \quad \forall v\in \testv(\X).
		\end{equation}
		From \eqref{eq:pre int by parts} we obtain that the above holds in fact for all $\phi\in \LIP_c\cap L^\infty(\rr)$ (not necessarily with $\phi(0)=0$). This gives \eqref{eq:mean curv eq on level sets}. Conversely given any  $\Hu(u) \in L^2(T\X;|\d u|\mm)$ satisfying \eqref{eq:mean curv eq on level sets}, by integrating over $\rr$, using again the coarea formula and the density of $\testv(\X)$, we see that $\Hu(u)$ must satisfy \eqref{eq:mean curv def} as well and thus it is unique by  Proposition \ref{prop:H first existence}.  

For the proof of \eqref{eq:wil integral formula} take any $E\subset \rr$ Borel. Then
\begin{align*}
			\wil_E(u)&=\sup_{v \in \testv(\X)} \int_{u^{-1}(E)} \left(\TD(v)-\frac{|v|^2}{2}\right)|\d u|\d\mm\\
			\text{(by \eqref{eq:mean curv eq on level sets})} \quad &=\sup_{v \in \testv(\X)} \int_{u^{-1}(E)} \left(\la \Hu(u),v\ra-\frac{|v|^2}{2}\right)|\d u|\d\mm= \frac12\int_{u^{-1}(E)} |\Hu(u)|^2|\d u|\d\mm,
		\end{align*}
		where in the last step we used the density of $\testv(\X)$ in $L^2(T\X;|\d u|\mm).$

        We pass to the  second part of the statement. Take any $u\in \dom(\Delta)\cap \LIP(\X)$. Suppose first that $u\in \dom(\wil).$  Then by Lemma
        \ref{lem:int by parts} applied with $\Omega=\X$ and the uniqueness of $\Hu(u)$ it follows that $\Hu(u)=H(u)e_u$ and therefore also $H(u)\in L^2(|\d u|\mm).$ Conversely assume that $H(u)\in L^2(|\d u|\mm).$ Then, again by Lemma \ref{lem:int by parts}  it follows that for all $v\in \testv(\X)$
        \[
        \int \left(\TD(v)-\frac{|v|^2}{2}\right) |\d u| \d \mm \le \int \left(|H(u)||v|-\frac{|v|^2}{2}\right) |\d u| \d \mm\le \frac12\int |H(u)|^2|\d u|\d \mm<\infty.
        \]
        Hence $\wil(u)<\infty$ and the proof is concluded.
	\end{proof}

    \begin{remark}
        In fact identity \eqref{eq:mean curv eq on level sets} holds for any $v\in \W_C\cap L^\infty (T\X)\cap \dom(\div)$ for which there exists a sequence $v_n\in \testv(\X)$ such that $v_n\to v$ in $\W_C(T\X)$ and $\div(v_n)\to \div(v)$ in $L^2(\mm).$  Indeed in this case we would have $\mathcal I_{TV}(v)=\int \TD(v) |\d u|\mm$, which implies that  \eqref{eq:pre int by parts}  holds for  $v$, and the exact same argument would yield \eqref{eq:mean curv eq on level sets}.
        \fr
    \end{remark}

	\subsection{Localizing the construction to open sets}
	Here we adapt the definitions and results of the previous sections to functions not necessarily globally defined. This will be useful, for example, to deal with the electrostatic potential later in the note.
	
	Let $\Omega\subset \X$ be open and let $u\in \W(\Omega)$. We define  
	$e_u\in L^0(T^*\X)\restr{\Omega} $ and $\TD(v)\in L^1(\Omega;|\d u|\mm)$ exactly as we did in Section \ref{sec:tandiv and wil} but restricting to $\Omega.$ Similarly we define the Willmore energy:
	\[
	\wil_E(u)\coloneqq \sup_{v \in \testv(\X)}	\int_{u^{-1}(E)} \left(\TD(v)-\frac{|v|^2}{2}\right) |\d u|  \d\mea\ge0.
	\]

To avoid dealing with $\partial \Omega$ we assume  that  $u\in C(\Omega)$ and
	\begin{equation}\label{eq:compact level sets}
		u^{-1}\big((0,1)\big)\subset \subset \Omega.
	\end{equation}
    
	We can also define $L^2(T\X;|\d u|\mm\restr{\Omega})\coloneqq L^2(T\X)\restr{\Omega} / \sim$, where  $\sim$ is the $|\d u|\mm$-a.e.\ equivalence in $\Omega.$

	The interval $(0,1)$ is taken for simplicity, but can  obviously be replaced by any interval $(a,b).$ 
	
	The following is the local counterpart of Proposition \ref{prop:main H}.
	\begin{prop}\label{prop:main H local} 
		Let $u \in \W\cap C(\Omega)$ be such that \eqref{eq:compact level sets} holds. Suppose that $\wil_{(0,1)}(u)<\infty$. Then  there exists $\Hu(u) \in L^2(T\X;|\d u|\mm\restr{\Omega})$, called mean curvature vector, such that for all $v\in \testv(\X)$
		\begin{equation}\label{eq:mean curv eq on level sets local}
			\int \la \Hu(u),v\ra \,\d {\rm Per}(\{u<t\},\Omega) =\int \TD(v)\,\d {\rm Per}(\{u<t\},\Omega), \quad \text{for a.e.\ $t \in (0,1)$.}
		\end{equation}
		Moreover $\Hu(u)$ is unique $|\d u|\mm$-a.e.\ in $u^{-1}(0,1).$
		
		If in addition $u\in \dom(\Delta,\Omega)\cap \LIP_\loc(\Omega)$, it holds $\wil_{(0,1)}(u)<\infty$ if and only if 
		\begin{equation}\label{eq:H formula local}
		H(u)\coloneqq\frac{\Delta u}{|\d u|} -\frac{\la \nabla |\d u|,\nabla u \ra }{|\d u|^2} \in L^2(u^{-1}(0,1);|\d u|\mm),
		\end{equation}
        in which case $\Hu(u)=H(u)e_u$ $\mm$-a.e.\ in $u^{-1}(0,1).$
	\end{prop}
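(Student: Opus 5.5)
The plan is to repeat the proof of Proposition \ref{prop:main H}, with every global object replaced by its restriction to the set $U\coloneqq u^{-1}(0,1)$, which is relatively compact in $\Omega$ by \eqref{eq:compact level sets}.

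\textbf{Step 1 (Hilbert space).} First I show that $L^2(T\X;|\d u|\mm\restr{U})$ is a Hilbert space in which (restrictions of) elements of $\testv(\X)$ are dense. The argument is that of Proposition \ref{prop:hilbert}. Given $v$ with $|v|\le 1$ supported in $\bar U$, I extend it by zero outside $\Omega$ and apply Lemma \ref{lem:approximation with bounded testv}. Since $|\d u|\in L^2(\Omega)$, the same dominated-convergence estimate as in Proposition \ref{prop:hilbert} gives the approximation.

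\textbf{Step 2 (Riesz representation).} Next I define $\Int(v)\coloneqq\int_U \TD(v)|\d u|\,\d\mm$. The scaling argument of Lemma \ref{lem:int tan div bounded}, applied to $\wil_{(0,1)}$, gives
\[
|\Int(v)|\le \sqrt{2\wil_{(0,1)}(u)}\,\|v\|_{L^2(T\X;|\d u|\mm\restr U)}.
\]
The Riesz theorem then produces $\Hu(u)$, unique $|\d u|\mm$-a.e.\ in $U$, such that $\int_U\la\Hu(u),v\ra|\d u|\,\d\mm=\int_U\TD(v)|\d u|\,\d\mm$.

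\textbf{Step 3 (localizing to level sets).} This is the main obstacle, since $u$ is only in $\W(\Omega)$ and $\phi(u)$ need not make sense globally. I would use $\phi\in C^\infty_c(0,1)$. For such $\phi$, the function $\phi(u)$ (extended by zero) belongs to $\W(\X)$ and has support in the compact set $\bar U\subset\Omega$. This is where continuity of $u$ and \eqref{eq:compact level sets} are used: $\phi(u)$ vanishes near $\partial U$, so it is a legitimate global Sobolev function. I then approximate $\phi(u)$ by uniformly bounded $f_n\in\test(\X)$ converging in $\W(\X)$. The Leibniz computation from the proof of Proposition \ref{prop:main H} and Proposition \ref{prop:cutoff trick} (valid pointwise on $\Omega$) give
\[
\int_U \phi(u)\la\Hu(u),v\ra|\d u|\,\d\mm=\int_U\phi(u)\TD(v)|\d u|\,\d\mm .
\]
In this step the error term $\la\nabla(f_n-\phi(u)),v-\la v,e_u\ra e_u\ra$ is integrated only against $|\d u|$ on $U$; to keep it finite one also localizes $f_n$ with a cutoff near $\bar U$. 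After that, the local coarea formula (Theorem \ref{thm:coarea} applied to $\eta u$ with $\eta\in\LIP_{bs}$, $\eta=1$ near $\bar U$, which gives perimeters relative to $\Omega$ on $(0,1)$) together with the arbitrariness of $\phi$ yields \eqref{eq:mean curv eq on level sets local}. One needs separability of $\testv(\X)$, or a countable dense subfamily, to make the exceptional set of $t$ independent of $v$. Since the statement fixes $v$ first, this is not even required. Uniqueness follows as before by integrating over $t$ and using Step 1.

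\textbf{Step 4 (explicit formula).} For $u\in\dom(\Delta,\Omega)\cap\LIP_\loc(\Omega)$, Lemma \ref{lem:int by parts} in its local form applies directly, since $u^{-1}(0,1)\subset\subset\Omega$. It gives $\Int(v)=\int_U\la v,e_u\ra H(u)|\d u|\,\d\mm$. If $\wil_{(0,1)}(u)<\infty$, uniqueness in Step 2 forces $\Hu(u)=H(u)e_u$, and hence $H(u)\in L^2(U;|\d u|\mm)$. Conversely, if $H(u)\in L^2(U;|\d u|\mm)$, Young's inequality bounds each competitor in $\wil_{(0,1)}$ by $\tfrac12\int_U|H(u)|^2|\d u|\,\d\mm$, exactly as at the end of the proof of Proposition \ref{prop:main H}.
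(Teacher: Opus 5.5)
Your proposal is correct and follows essentially the same route as the paper. You restrict to $A=u^{-1}(0,1)$, prove density of $\testv(\X)$ in $L^2(T\X;|\d u|\mm\restr{A})$, and obtain $\Hu(u)$ by the Riesz theorem via the scaling bound of Lemma \ref{lem:int tan div bounded}; you then pass to level sets using $\phi(u)$ with $\phi$ compactly supported in $(0,1)$ together with the coarea formula, and get the explicit formula from the local version of Lemma \ref{lem:int by parts}.

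The extra cutoff on $f_n$ in Step 3 is unnecessary: since $|\d u|\in L^2(\Omega)$, the error term already tends to zero without it, and multiplying by a merely Lipschitz cutoff would take $f_n$ out of $\test(\X)$.
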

	\begin{proof}
    We will in fact find $\Hu(u)\in L^2(T\X;|\d u|\mm\restr{A})\coloneqq L^2(T\X)\restr{A} / \sim$, where $A\coloneqq u^{-1}((0,1))$ and $\sim$ is the $|\d u|\mm$-a.e.\ equivalence in $A.$ The required $\Hu(u)\in L^2(T\X;|\d u|\mm\restr{\Omega})$ then follows  by extension to zero.  Exactly as in Proposition \ref{prop:hilbert} we can show that $L^2(T\X;|\d u|\mm\restr{A})$ is a separable Hilbert space and (the restriction to $A$) of $\testv(\X)$ is dense.

    The proof of the statement is then almost identical to the one of Proposition \ref{prop:main H}. We define  the functional 
    \[
    \testv(\X)\mapsto \mathcal I_{TV}^A(v)\coloneqq \int_A \TD(v)|\d u|\, \d \mm.
    \]
    The boundedness argument of Lemma \ref{lem:int tan div bounded}  applies verbatim thanks to the assumption $\wil_{(0,1)}(u)<\infty$, yielding by the Riesz representation theorem a unique $\Hu(u)\in L^2(T\X;|\d u|\mm\restr{A})$ which represents $\mathcal I_{TV}^A(v)$. To obtain identity \eqref{eq:mean curv eq on level sets local} on level sets we note that for all $\phi\in \LIP_c(0,1)$ the local analogue of Proposition \ref{prop:cutoff trick} still applies. Then, the same density and coarea argument of Proposition \ref{prop:main H} gives \eqref{eq:mean curv eq on level sets local}. 

    Finally, when $u\in \dom(\Delta,\Omega)\cap \LIP_\loc(\Omega)$ the last assertion follows exactly as in Proposition \ref{prop:main H}, using the local version of Lemma \ref{lem:int by parts}.
	\end{proof}
	
 It turns out that harmonic functions have finite Willmore energy.
	\begin{cor}\label{cor:H formula for harmonic functions}
    Let $\Xdm$ be an $\RCD(K,N)$ space with $N\in(2,\infty).$
		Let $u$ be harmonic in $\Omega$  and such that $u^{-1}(0,1)\subset\subset \Omega$. Then $\wil_{(0,1)}(u)<\infty$ and 
		\[
		\Hu(u) =-\frac{\la \nabla |\d u|,\nabla u \ra }{|\d u|^2} e_u, \quad \mm\text{-a.e.\ in $u^{-1}(0,1).$}
		\]
	\end{cor}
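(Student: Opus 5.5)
The plan is to reduce the statement to the last part of Proposition \ref{prop:main H local}. We first check its hypotheses, and then check that the scalar function $H(u)$ is square integrable against $|\d u|\mm$ on $u^{-1}(0,1)$.

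\textbf{Hypotheses of Proposition \ref{prop:main H local}.} Since $\X$ is $\RCD(K,N)$ with $N<\infty$, harmonic functions are locally Lipschitz (\cite{Jiang13,VIOLO2025}). Hence $u\in C(\Omega)\cap\LIP_\loc(\Omega)$, and $u\in\dom(\Delta,\Omega)$ with $\Delta u=0$.

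The proposition is stated for $u\in\W(\Omega)$, while we only know $u\in\W_\loc(\Omega)$. All the quantities involved only see $A\coloneqq u^{-1}(0,1)$, which is relatively compact in $\Omega$. So we either note that the proof of Proposition \ref{prop:main H local} uses only integrability on a neighbourhood of $\bar A$, or we replace $u$ by $\min(\max(u,-\delta),1+\delta)$ truncated near $\bar A$. Since $u^{-1}[-\delta,1+\delta]$ is still compact in $\Omega$ for small $\delta$, by the same compactness, this changes nothing on $A$. I would take the first route and just remark it.

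\textbf{Reduction to an integral bound.} Since $\Delta u=0$, Proposition \ref{prop:main H local} says that $\wil_{(0,1)}(u)<\infty$ if and only if
\[
H(u)=-\frac{\la\nabla|\d u|,\nabla u\ra}{|\d u|^2}\in L^2(A;|\d u|\mm),
\]
and in that case it also gives $\Hu(u)=H(u)e_u$, which is exactly the claimed formula. So everything comes down to proving
\[
\int_A \frac{\la\nabla|\d u|,\nabla u\ra^2}{|\d u|^4}|\d u|\,\d\mm<\infty .
\]

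\textbf{The integral bound (main step).} By Cauchy--Schwarz, $\la\nabla|\d u|,\nabla u\ra^2\le |\d|\d u||^2|\d u|^2$ on $\{|\d u|>0\}$. Hence the integrand is bounded by $\nchi_{\{|\d u|>0\}}|\d|\d u||^2/|\d u|$. This is in $L^1_\loc(\Omega)$ by \eqref{eq:harmest2} in Theorem \ref{cor:harmest2}, which applies since $N\ge2$. Because $\bar A$ is compact in $\Omega$, the integral over $A$ is finite.

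We must also account for where $|\d u|=0$. The convention in Lemma \ref{lem:int by parts} sets the integrand to zero there, and that set is $|\d u|\mm$-null anyway, so it contributes nothing.

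I expect no real obstacle. The only delicate points are the bookkeeping about $\W$ versus $\W_\loc$ in applying Proposition \ref{prop:main H local}, and that the local estimate \eqref{eq:harmest2} is exactly the sharp input needed to make $H(u)$ square integrable.
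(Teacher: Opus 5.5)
Your proposal is correct and follows the paper's own argument. You reduce, via the last part of Proposition \ref{prop:main H local} with $\Delta u=0$, to showing $H(u)\in L^2(u^{-1}(0,1);|\d u|\mm)$. That follows from \eqref{eq:harmest2}, because Cauchy--Schwarz gives $H(u)^2|\d u|\le \nchi_{\{|\d u|>0\}}|\d|\d u||^2/|\d u|$ and $u^{-1}(0,1)\subset\subset\Omega$.

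One small caveat concerns your side remark: $u^{-1}([-\delta,1+\delta])$ need not be relatively compact in $\Omega$, for example if $u\equiv 0$ on an unbounded component of $\Omega$, so the truncation route is not justified as written. The route you actually choose is fine, since the proof of Proposition \ref{prop:main H local} only uses $u$ near $\overline{u^{-1}(0,1)}$.
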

	\begin{proof}
	    By Theorem \ref{cor:harmest2} we have that $\nchi_{\{|\d u|>0\}}\frac{|\d |\d u||^2}{|\d u|} \in L^1_\loc(\Omega),$ which implies precisely that \eqref{eq:H formula local} is satisfied.
	\end{proof}
	In the next section, building upon the above result and cut-off arguments, we will construct many functions that have finite Willmore energy in the global sense.

	\subsection{Existence of many functions with finite Willmore energy}
    Our goal is to prove the following density result.
	\begin{theorem}\label{thm:domain dense}
		Let $\Xdm$ be an $\RCD(K,N)$ space with $N\in[2,\infty)$. Then $\Dwil\cap \LIP_c(\X)\cap D(\Delta)$ is dense in $L^p(\mm)$ for all $p\in [1,\infty).$
	\end{theorem}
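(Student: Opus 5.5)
The plan is to build compactly supported functions of the form $u=\psi(h)$, with $h$ harmonic on a thin annulus, check that they lie in $\Dwil\cap\LIP_c(\X)\cap\dom(\Delta)$ using the second part of Proposition \ref{prop:main H}, and then glue such functions together on disjoint balls.

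\textbf{Step 1: the one-variable reparametrization.} Take $w$ harmonic in an open set $\Omega$ and $\psi\in C^\infty(\rr)$. On $\{|\d w|>0,\ \psi'(w)\neq0\}$ one has
\[
\Delta\psi(w)=\psi'(w)\Delta w+\psi''(w)|\d w|^2,\qquad \nabla|\d\psi(w)|=\sgn(\psi'(w))\big(\psi''(w)|\d w|\nabla w+\psi'(w)\nabla|\d w|\big).
\]
Inserting this into \eqref{eq:H formula}, the $\psi''$-terms cancel, and
\[
H(\psi(w))=\sgn(\psi'(w))H(w),\qquad |H(\psi(w))|^2|\d\psi(w)|=|\psi'(w)|\,|H(w)|^2|\d w|.
\]
Suppose $\psi'$ is supported in $[a,b]\subset(0,1)$ and $w^{-1}([a,b])\subset\subset\Omega$. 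Then Corollary \ref{cor:H formula for harmonic functions} (equivalently \eqref{eq:harmest2}) gives $H(\psi(w))\in L^2(|\d\psi(w)|\mm)$. Where $\psi'(w)=0$ the function $u=\psi(w)$ is locally constant, so $|\d u|=0$ and those points do not contribute to $H(u)$.

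\textbf{Step 2: the building block.} Fix $x_0$, $r>0$ and $\eps>0$, and let $A:=B_{(1+\eps)r}(x_0)\setminus\bar B_r(x_0)$. Let $h$ be the harmonic function in $A$ with boundary datum $1$ on $\partial B_r$ and $0$ on $\partial B_{(1+\eps)r}$; concretely, $h$ minimizes the energy among functions in $\W$ equal to a fixed Lipschitz cutoff outside $A$. I then need $h$ to be continuous up to $\partial A$ with those boundary values. Granting this, for any $0<a<b<1$ the set $h^{-1}([a,b])$ is compactly contained in $A$.

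Choose $\psi$ smooth with $\psi=0$ on $(-\infty,a]$ and $\psi=1$ on $[b,\infty)$, and set $u:=\psi(h)$ in $A$, $u:=1$ on $\bar B_r$, $u:=0$ outside $B_{(1+\eps)r}$. The function $u$ is locally constant near $\partial A$, and $h$ is locally Lipschitz in $A$. Hence $u\in\LIP_c(\X)$, and $\Delta u=\psi''(h)|\d h|^2\in L^\infty$ with compact support, so $u\in\dom(\Delta)$. Step 1 and Proposition \ref{prop:main H} then give $u\in\Dwil$. Moreover $u=1$ on $B_r$, $0\le u\le1$, and $\supp u\subset\bar B_{(1+\eps)r}$.

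\textbf{Step 3: gluing and density.} Because $\TD$ and the integrand in Definition \ref{def:willmore functional} are local, I expect that if $u_1,\dots,u_m$ have the properties above and pairwise disjoint supports, and $c_i\in\rr$, then $\sum_ic_iu_i$ again satisfies the criterion \eqref{eq:H formula}. This is because $H(c_iu_i)=\sgn(c_i)H(u_i)$ and the weights $|\d(c_iu_i)|=|c_i||\d u_i|$ add up over disjoint supports. The functional $\wil$ itself is not linear, so disjointness of supports is essential here.

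Now fix $f\in L^p$. It suffices to approximate a simple function $\sum_jc_j\nchi_{E_j}$ with $E_j$ bounded Borel sets. By the Vitali covering theorem (the measure is doubling on bounded sets in $\RCD(K,N)$), each $E_j$ is covered up to small measure by finitely many disjoint closed balls of small radius, and these can be taken disjoint across different $j$. Shrinking each ball slightly keeps the enlargements $B_{(1+\eps)r_i}$ pairwise disjoint. By Bishop--Gromov, $\mm(B_{(1+\eps)r}\setminus B_r)\to0$ as $\eps\to0$, locally uniformly in $r$, so the gluing of Step 3 is close to $f$ in $L^p$, with the $L^\infty$ bound $|c_j|$.

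\textbf{Main obstacle.} The difficult point is the boundary regularity in Step 2, namely that $h$ attains its boundary values uniformly so that the intermediate level sets stay away from $\partial A$. I would first try a Wiener-type criterion: the complement of $A$ near each boundary point contains a ball of comparable radius, so measure doubling gives uniform capacity density, i.e.\ cap-fat boundary in the sense used for \cite[Theorem 8.4]{GigliViolo23}. If this is not directly available for spheres, I would replace the balls by slightly perturbed sets, such as tubular neighbourhoods $K^\eps$ as in the proof of Proposition \ref{prop:electro cap formula}, whose boundaries are known to be cap-fat.
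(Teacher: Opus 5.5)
Your architecture matches the paper's: a reparametrization $\psi(h)$ of a harmonic cut-off as in Lemma \ref{lem:phi}, the criterion in the second part of Proposition \ref{prop:main H}, sums with disjoint supports, and density of simple functions. The gap is the building block of Step 2. Your claim that $h^{-1}([a,b])\subset\subset A$ is not established, and the route you propose for it fails at the outer sphere.

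At points of $\partial B_r(x_0)$ your Wiener-type argument is fine. Moving from the boundary point towards $x_0$ along a geodesic gives balls of comparable radius inside $\bar B_r(x_0)$, and doubling then gives uniform capacity density.

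At points of $\partial B_{(1+\eps)r}(x_0)$ you would instead need points at distance at least $(1+\eps)r+cs$ from $x_0$ within distance $s$ of the boundary point. That amounts to extending geodesics from $x_0$ beyond the sphere, which fails in general $\RCD(K,N)$ spaces. Here is an example.
\begin{itemize}
\item Let $\X$ be a closed convex sector in $\rr^2$ with vertex $p$ and opening $<\pi$ (close to $\pi$ if you want $\eps$ small). Put $x_0$ on its axis with $\sfd(x_0,p)=(1+\eps)r$ and $r<\sfd(x_0,\partial\X)$.
\item Near $p$ the set $\X\setminus B_{(1+\eps)r}(x_0)$ is just $\{p\}$, which is polar. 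So $h$ is harmonic across $p$, and by the strong maximum principle $h(p)\in(0,1)$.
\item Hence $h^{-1}([a,b])$ accumulates at $p\in\partial A$ whenever $a<h(p)<b$, and $\psi(h)$ is not locally constant near $\partial A$.
\end{itemize}
In this example the defect sits on a polar set. If instead $\X\setminus B_{(1+\eps)r}(x_0)$ is thin but not polar near such a point (a Lebesgue-spine-type configuration), the datum $0$ is not attained and $\psi(h)$ glued with $0$ is discontinuous there. Then the Lipschitz regularity, membership in $\dom(\Delta)$, and the use of \eqref{eq:harmest2} (which is only local in $A$) all break down.

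Your fallback does not help either. The cap-fatness of $K^\eps$ used in Proposition \ref{prop:electro cap formula} is fatness of $\overline{K^\eps}$, i.e.\ of the inner side. An annulus between two tubular neighbourhoods has exactly the same problem on its outer boundary.

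The paper avoids this by invoking Theorem \ref{cor:harmonic cut off 2}. For any compact $P$ and open $U\supset P$ it provides $u\in C(\X)\cap\W_0(U)$ with $0\le u\le1$, $u=1$ on $P$ and $u=0$ off $U$. Crucially, $u$ is required to be harmonic only in $\{0<u<1\}$, not in all of $U\setminus P$. This relaxation is what lets the outer boundary be handled by a barrier argument rather than by boundary regularity. Continuity of $u$ on all of $\X$ then gives $u^{-1}([1/3,2/3])\subset\subset\{0<u<1\}$ for free.

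If you replace your annulus potential $h$ by this $u$, with $P=\bar B_r(x_0)$ and $U=B_{(1+\eps)r}(x_0)$, the rest of your argument goes through. Moreover, since $P$ may be any compact set, you can approximate each $E_j$ from inside by compact sets using inner regularity, so the Vitali covering step becomes unnecessary.
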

	For the proof we need the following existence result  for harmonic functions. The proof  can be done using \cite[Theorem B.1]{GigliViolo23} and a barrier argument, for  details see \cite[Corollary 2.2.4]{thesis}.
	\begin{theorem}[Existence of harmonic cut-off functions]\label{cor:harmonic cut off 2}
		Let $(\X,\sfd,\mea)$ be an $\mathsf{RCD}(K,N)$  space with $N<+\infty$. For every compact set $P\subset \X$ and any  open set $U\subset \X$ containing $P$ there exists a continuous function $u \in C(\X)\cap \W_0(U)$ such that $0\le u\le 1$ and
		\[
		\begin{cases}
			\Delta u=0,  & \text{in $\{0<u<1\}$},\\
			u=1, & \text{in $P$},\\
			u=0, & \text{in $\X\setminus U$}.
		\end{cases}
		\]
	\end{theorem}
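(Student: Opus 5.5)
The plan is to obtain $u$ as the solution of a Dirichlet problem on an annular region whose two boundary components are regular enough for boundary continuity, and then to extend it by $1$ inside and $0$ outside.

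\textbf{Choice of the sets.} Since $\RCD(K,N)$ spaces with $N<\infty$ are proper, $P$ is compact and $U\supset P$ is open, so we can fix $\eps>0$ with $\overline{P^{3\eps}}\subset U$ and $\overline{P^{3\eps}}$ compact, where $P^r$ denotes the open $r$-tubular neighborhood of $P$. We set $K\coloneqq \overline{P^{\eps}}$ and $V\coloneqq P^{2\eps}$, so that $P\subset K\subset V\subset\subset U$. As already used in the proof of Proposition \ref{prop:electro cap formula} (via \cite{GigliViolo23}), tubular neighborhoods in $\RCD(K,N)$ spaces have cap-fat boundary. The plan is to use this both for $\partial K$ seen from the outside and for $\partial V$ seen from the inside. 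If the latter is not directly available, we will choose the radius $2\eps$ among a.e.\ radii, or replace $V$ by a slightly different regularized open set, e.g.\ a superlevel set of a smoothed distance function.

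\textbf{Existence via the Dirichlet problem.} Let $\psi\in\LIP_c(V)$ with $0\le\psi\le1$ and $\psi=1$ on a neighborhood of $K$. We minimize $\int|\d v|^2\,\d\mm$ over the closed convex set $\{v\in \W(\X)\,:\, v-\psi\in \W_0(V\setminus K)\}$. The minimizer $u$ is harmonic in $V\setminus K$, satisfies $0\le u\le 1$ by truncation (which does not increase the energy), and equals $1$ on $K$ and $0$ outside $V$. By construction $u\in \W_0(V)\subset\W_0(U)$. This is exactly the setting of \cite[Theorem B.1]{GigliViolo23}, which moreover gives local Lipschitz regularity of $u$ in the interior of $V\setminus K$.

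\textbf{Continuity up to the boundary.} This is the main obstacle. We need $u(x)\to1$ as $x\to\partial K$ and $u(x)\to 0$ as $x\to\partial V$, so that the glued function is in $C(\X)$.
\begin{itemize}
\item The first route is the Wiener-type criterion: at cap-fat boundary points, solutions of the Dirichlet problem with continuous (here locally constant) boundary data attain them continuously. This is the barrier-type statement in \cite{GigliViolo23}, Appendix B.
\item If this cannot be quoted directly, we will build explicit barriers from the distance function. Near $\partial K$ we would use $w=1-C\,(\sfd(\cdot,K))^{\alpha}$, and near $\partial V$ we would use $C(2\eps-\sfd(\cdot,P))^\alpha$ for small $\alpha$. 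Laplacian comparison for $\sfd$ in $\RCD(K,N)$ controls $\bd\sfd$ from above away from the set, which should make these sub/supersolutions on a thin strip; an alternative is to use the capacitary potential of a small ball. The comparison principle then squeezes $u$ between the barriers.
\end{itemize}

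\textbf{Conclusion.} With boundary continuity in hand, $u$ is continuous on $\X$, and $u=1$ on $K\supset P$ and $u=0$ on $\X\setminus V\supset \X\setminus U$. Moreover $\{0<u<1\}\subset V\setminus K$, which is the open set where $u$ is harmonic; hence $\Delta u=0$ in $\{0<u<1\}$ (the latter being open by continuity), as required.
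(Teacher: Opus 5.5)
\textbf{There is a genuine gap at the outer boundary.} Your overall strategy matches the one the paper indicates: solve a Dirichlet problem on an annular region using Theorem B.1 of Gigli--Violo, then get continuity at the boundary from fatness or barriers. Your treatment of the inner boundary is correct. For $x\in\partial K$, a nearest point $y\in P$ gives $B_\eps(y)\subset K$. Moreover, $1-C\,\sfd(\cdot,K)^\alpha$ is subharmonic in a thin strip outside $K$, since there $\sfd(\cdot,K)=\sfd(\cdot,P)-\eps$ and Laplacian comparison bounds $\bd\sfd(\cdot,P)$ from above.

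The problem is $\partial V$. The fatness of tubular neighbourhoods used in Proposition~\ref{prop:electro cap formula} is fatness of the \emph{closed} neighbourhood $\overline{K^\eps}$ seen from outside. The complement of an open tubular neighbourhood, seen from inside, can be arbitrarily thin. Two examples show this:
\begin{itemize}
\item In $\mathbb{R}^3$, let $P$ be the vertices of a regular tetrahedron with circumradius $2\eps$. The circumcentre $c$ is an isolated point of $\X\setminus V$, so $\Cap(\{c\})=0$. Your minimiser is then harmonic across $c$ and positive there, so ``$u=0$ on $\X\setminus V$'' fails.
\item In $\mathbb{R}^4=\mathbb{R}\times\mathbb{R}^3$, let $P$ be a round $2$-sphere of radius $2\eps$ in $\{t=0\}$. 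Near the centre, $\X\setminus V$ is the cusp $\{|y'|\lesssim t^2\}$, which is thin by Wiener's test. Then $u$ has no continuous representative at all.
\end{itemize}
Your proposed barrier fails for the same reason. For $C(2\eps-\sfd(\cdot,P))^\alpha$ to be superharmonic you need a \emph{lower} bound on $\bd\sfd(\cdot,P)$ near $\partial V$. Laplacian comparison only bounds it from above, and on the cut locus (for instance at $c$) it has a negative singular part. Your fallbacks (choosing a generic radius, or a smoothed distance) are not supported by any result available in $\RCD$ spaces.

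\textbf{The fix:} make the outer boundary also the boundary of a closed tubular neighbourhood. Set $F\coloneqq\X\setminus P^{2\eps}$ and $V\coloneqq\{x:\sfd(x,F)>\eps/2\}$.
\begin{itemize}
\item The triangle inequality gives $\sfd(\cdot,F)\ge\eps$ on $\overline{P^\eps}$. Hence $K\subset V\subset P^{2\eps}\subset\subset U$.
\item For $x\in\partial V$ a nearest point $y\in F$ exists by properness, satisfies $\sfd(x,y)=\eps/2$, and $B_{\eps/2}(y)\subset\X\setminus V$. So $\X\setminus V$ satisfies a uniform exterior-ball (corkscrew) condition at every point of $\partial V$, and is therefore cap-fat by doubling and Poincar\'e.
\item Equivalently, for $\alpha\in(0,1)$ and $\delta$ small, $C(\sfd(\cdot,F)-\eps/2)^\alpha$ is superharmonic on $\{\eps/2<\sfd(\cdot,F)<\eps/2+\delta\}$, because Laplacian comparison gives $\bd\sfd(\cdot,F)\le C_\eps\mm$ there. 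Once $C\ge\delta^{-\alpha}$, the comparison principle shows it dominates $u$ on that strip.
\end{itemize}
With this choice both boundary components are handled exactly as you handle $\partial K$, and the rest of your proof goes through unchanged.
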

	
	For the proof of Theorem \ref{thm:domain dense} we need a further technical lemma.
	\begin{lemma}\label{lem:phi}
		Let $u$ be harmonic in $\Omega$. Then for every $\phi \in C^2(\rr)$ with $\phi',\phi''$ bounded it holds that $\phi(u)\in \LIP_{\loc}(\Omega)\cap \dom(\Delta,\Omega)$, $|\d \phi(u)|\in \W_{\loc}(\Omega)$ and
		\[
		\int_{\Omega'} \left|\frac{\Delta \phi(u)}{|\d \phi(u)|}-\frac{\la \nabla |\d \phi(u)| ,\nabla\phi(u) \ra}{|\d \phi(u)|^2}\right|^2 |\d \phi(u)| \d \mea<+\infty,
		\]
		for every $\Omega'\subset \subset \Omega$, where the whole integrand is taken to be 0 whenever  $ |\d \phi(u)| =0$.
	\end{lemma}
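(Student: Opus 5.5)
We compute everything in terms of $u$ via chain rules and then use Theorem \ref{cor:harmest2}, in the form \eqref{eq:harmest2}, to control the one term that is not obviously integrable. Harmonic functions are locally Lipschitz, so $\phi(u)\in\LIP_\loc(\Omega)$. By the chain rule for the Laplacian (valid since $u\in\LIP_\loc$ and $\phi\in C^2$ with bounded derivatives),
\[
\Delta\phi(u)=\phi'(u)\Delta u+\phi''(u)|\d u|^2=\phi''(u)|\d u|^2\in L^\infty_\loc(\Omega),
\]
so $\phi(u)\in\dom(\Delta,\Omega)$. Moreover $|\d\phi(u)|=|\phi'(u)|\,|\d u|$. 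Since $|\d u|\in\W_\loc(\Omega)$ (Theorem \ref{cor:harmest2} with $\beta=2$, or the standard Bochner estimate) and $|\d u|$ is locally bounded, we get $|\phi'(u)|\in\LIP_\loc(\Omega)$. Hence the product lies in $\W_\loc(\Omega)$, with
\[
\nabla|\d\phi(u)|=|\phi'(u)|\nabla|\d u|+\sgn(\phi'(u))\phi''(u)|\d u|\nabla u.
\]
Here $|\phi'|$ is Lipschitz with derivative $\sgn(\phi')\phi''$ a.e. The set where $\phi'(u)=0$ while $|\d u|>0$ carries no trouble, because the integrand is defined to be $0$ wherever $|\d\phi(u)|=0$.

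Next we compute the integrand on the set $\{|\d\phi(u)|>0\}=\{\phi'(u)\neq0,\ |\d u|>0\}$. Write $s=\sgn\phi'(u)$ and use $\nabla\phi(u)=\phi'(u)\nabla u$. Then
\[
\frac{\Delta\phi(u)}{|\d\phi(u)|}=\frac{\phi''(u)|\d u|}{|\phi'(u)|},
\qquad
\frac{\la\nabla|\d\phi(u)|,\nabla\phi(u)\ra}{|\d\phi(u)|^2}=s\frac{\la\nabla|\d u|,\nabla u\ra}{|\d u|^2}+\frac{\phi''(u)|\d u|}{|\phi'(u)|}.
\]
The $\phi''$ terms cancel. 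The quantity in the statement therefore equals $-s\,\la\nabla|\d u|,\nabla u\ra/|\d u|^2$, which is the harmonic mean curvature $H(u)$ up to sign; this is the expected invariance of level sets under reparametrization.

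The integral to estimate is then bounded by
\[
\int_{\Omega'}\frac{|\la\nabla|\d u|,\nabla u\ra|^2}{|\d u|^4}|\phi'(u)||\d u|\,\d\mm\le\|\phi'\|_\infty\int_{\Omega'}\nchi_{\{|\d u|>0\}}\frac{|\d|\d u||^2}{|\d u|}\,\d\mm,
\]
which is finite by \eqref{eq:harmest2}. The main point requiring care, rather than a real obstacle, is justifying the Sobolev chain and product rules for $|\phi'(u)|\,|\d u|$. For this we use local boundedness of $|\d u|$ and the fact that $|\phi'|\circ u$ is locally Lipschitz, together with the locality of the differential on the set $\{\phi'(u)=0\}$, where $\nabla|\d\phi(u)|=0$ a.e.
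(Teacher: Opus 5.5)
Your proof is correct and takes essentially the same route as the paper. Both use the chain rule for the Laplacian to get $\Delta\phi(u)=\phi''(u)|\d u|^2$, and the chain and Leibniz rules for $|\d\phi(u)|=|\phi'(u)|\,|\d u|$ together with $|\d u|\in\W_\loc(\Omega)$ from Theorem \ref{cor:harmest2}. The $\phi''$ terms then cancel, leaving $-\sgn(\phi'(u))\la\nabla|\d u|,\nabla u\ra/|\d u|^2$, and \eqref{eq:harmest2} concludes; your explicit factor $\|\phi'\|_\infty$ just spells out a step the paper leaves implicit.
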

	\begin{proof}
		We first prove the regularity claims on $\phi(u)$.
		Recall that being harmonic $u$ is locally Lipschitz (see \cite{Jiang13}), hence $\phi(u)\in \LIP_\loc(\Omega)$ and from the chain rule for the Laplacian we have that $\phi(u)\in \D(\Delta,\Omega)$ with $\Delta (\phi(u))=\phi''(u)|\d u|^2\in L^2_\loc(\Omega).$ From the chain rule for the gradient we have $|\d \phi(u)|=|\phi'(u)||\d u|$ $\mea$-a.e.\ in $\Omega.$   Moreover, since $\phi'(u)\in \LIP_{\loc}(\Omega)$, again from the chain rule for the gradient we have $|\phi'(u)|\in \LIP_\loc(\Omega)$ with $\d |\phi'(u)|=\sgn(\phi'(u))\phi''(u)\d u$. Recall also that from Theorem \ref{cor:harmest2} we have that $|\d u|\in \W_{\loc}(\Omega)$. Combining these observations with the Leibniz rule for the gradient  we obtain that $|\d \phi(u)|\in \W_{\loc}(\Omega)$ with 
		\begin{align*}
			\d |\d \phi(u)|=\sgn(\phi'(u))\,\big(\phi''(u)\d u|\d u|+\phi'(u)\d|\d u|\big), \quad \mea\text{-a.e.\ in $\Omega$.}
		\end{align*}
		Therefore
		\begin{align*}
			\frac{\Delta \phi(u)}{|\d \phi(u)|}-\frac{\la \d |\d \phi(u)| ,\d \phi(u) \ra}{|\d \phi(u)|^2}=-\sgn(\phi'(u))\frac{\la \d |\d u| ,\d u \ra}{|\d u|^2}, \quad \mea\text{-a.e.\ in $\Omega$}
		\end{align*}
		where both sides are taken to be $0$ whenever $|\d u|=0$ or $|\d \phi(u)|=0.$ Combining this with \eqref{eq:harmest2} gives the desired conclusion.
	\end{proof}
	We now proceed with the main density result.
	\begin{proof}[Proof of Theorem \ref{thm:domain dense}]
		For any set $K\subset \X$ and number $\eps>0$ we define the open tubular neighborhood $K^{\eps}\coloneqq\{x \ : \ \sfd(x,K)<\eps\}$.
		
		\textbf{Step 1}: We show that for every compact set $K\subset \X$ and every $\eps>0$ there exists $u_{K}^\eps \in\Dwil\cap \LIP_c(\X)\cap D(\Delta)$ such that $u_{K}^\eps=1$ in $K$, $0\le u_{K}^\eps\le 1$ and $\supp(u_{K}^\eps)\subset K^\eps$.

		Fix any such $K \subset \X$ and $\eps>0$.  From Theorem \ref{cor:harmonic cut off 2} we know that there exists a function $u=u_\eps \in\ C(\X)\cap \W_0(K^\eps)$ with $0\le u\le 1$, $u=1$ in $K$, $\supp(u)\subset K^\eps$ and which is harmonic in $\Omega\coloneqq\{0<u<1\}$. Choose a function $\phi \in C^2(\rr)$ so that $0\le \phi\le 1$, $\phi\equiv 0$ in $(-\infty,1/3)$ and $\phi \equiv 1$ in $(2/3,+\infty)$. It holds $0\le \phi(u)\le 1$, $\phi(u)=1$ in $K$ and $\supp (\phi(u))\subset K^\eps.$ It remains to show that $\phi(u) \in \Dwil\cap \LIP_c(\X)\cap D(\Delta)$. From the properties of $u$ it follows that $u^{-1}([1/3,2/3])\subset \subset \Omega$, since it is a closed set and $\partial \Omega\subset \{u=1\}\cup \{u=0\}$ (because $u$ is continuous in $\X$). In particular, since $u \in \LIP_{\loc}(\Omega)$, we have $\phi(u)\in \LIP_c(\X)$. Moreover from Lemma \ref{lem:phi} we have that $\phi(u)\in\dom(\Delta,\Omega)$ and $|\d \phi(u)|\in \W_{\loc}(\Omega)$. Additionally from the locality property of the gradient and Laplacian we have that 
		$$|\nabla |\d \phi(u)||=0=\Delta\restr{\Omega}\phi(u), \quad \mea\text{-a.e.\ in $\Omega\setminus \{1/3\le u\le 2/3\}$}.
		$$
		In particular, again because $u^{-1}([1/3,2/3])\subset \subset \Omega,$ from Lemma \ref{lem:phi} we obtain that
		\[
		\int_{\Omega} \left|\frac{\Delta\restr{\Omega} \phi(u)}{|\d \phi(u)|}-\frac{\la \nabla |\d \phi(u)| ,\nabla\phi(u) \ra}{|\d \phi(u)|^2}\right|^2 |\d \phi(u)| \d \mea<+\infty.
		\]
		Thanks to  the second part of Proposition \ref{prop:main H} it suffices to show that $\phi(u) \in \dom(\Delta)$. Indeed this would imply also that $\phi(u)\in \Dwil.$  To show this let $\eta \in \LIP_\c(\X)$ be arbitrary and let $\eta'\in \LIP_c(\Omega)$ be such that $\eta' \equiv 1$ in $\supp \eta \cap \{1/3\le u\le 2/3\}$. From the linearity of the gradient operator
		\begin{align*}
			\int \la\nabla \phi (u), \nabla \eta\ra\, \d \mea &= \int \la\nabla  \phi (u), \nabla (\eta\eta')\ra\, \d \mea +\int_{ \X\setminus \{1/3\le u\le 2/3\}} \la\nabla  \phi (u), \nabla (\eta(1-\eta'))\ra\, \d \mea \\
			&= \int \la\nabla \phi(u), \nabla (\eta\eta')\ra\, \d \mea,
		\end{align*}
		where in the second equality we used that $|\d \phi(u)|=0$ $\mea$-a.e.\ in $\X\setminus \{1/3\le u\le 2/3\}$.  Moreover since now $\eta\eta' \in \LIP_c(\Omega)$ we can use that $\phi(u) \in \dom(\Delta,\Omega)$  to deduce that
		\[
		\int \la\nabla \phi(u), \nabla \eta\ra\, \d \mea =- \int \eta' \eta \Delta \phi(u) \d \mea=- \int_{\Omega} \eta' \eta \Delta\restr{\Omega} \phi(u) \, \d \mea, 
		\]
		which implies that $\phi(u) \in \dom(\Delta,\X)$ with $\Delta \phi(u)=\nchi_\Omega \Delta\restr{\Omega} \phi(u)$. However, since $\Delta\restr{\Omega}\phi(u)=0$ in $\Omega\setminus \{1/3\le u\le 2/3\}$ we also deduce that $\Delta \phi(u) \in L^2(\X)$, which is what we wanted.
		
		\textbf{Step 2}: We prove that given $\{K_i\}_{i=1}^n$  disjoint compact subsets of $\X$, then $\sum_{i=1}^n \alpha_i\nchi_{K_i}$ is in the $L^p(\mm)$-closure of  $\Dwil\cap \LIP_c(\X)\cap D(\Delta)$ for every $\alpha_i \in \R$. Indeed take a sequence $\eps_k \downarrow 0$ so that the open sets $K_i^{\eps_k}$ are still disjoint for all $k$ and $i=1,\dots n,$ and let $u_{K_i}^{\eps_k}\in \Dwil\cap \LIP_c(\X)\cap D(\Delta)$ be the functions given in Step 1. Then
		\[
		f_{k}\coloneqq \sum_{i=1} \alpha_i u_{K_i}^{\eps_k} \in \Dwil\cap \LIP_c(\X)\cap D(\Delta).
		\]
		Note that $\Dwil$ is not in general closed by linear combinations, however in this case the supports of $u_{K_i}^{\eps_k}$ are pairwise disjoint, hence it is immediately checked that $f_k \in  \Dwil.$ Clearly $u_{K_i}^{\eps_k}\to \nchi_{K_i}$ as $\eps_k\to 0$ in $L^p(\mm)$, hence 
		$$f_k \to \sum_{i=1}^n \alpha_i\nchi_{K_i}, \text{ in $L^p(\mm)$}.$$
		
		\textbf{Step 3}:  Given $\{E_i\}_{i=1}^n$  disjoint Borel subsets of $\X$ with $\mm(E_i)<\infty$, then $\sum_{i=1}^n \alpha_i\nchi_{E_i}$ is in the $L^p(\mm)$-closure of  $\Dwil\cap \LIP_c(\X)\cap D(\Delta)$ for every $\alpha_i \in \R$. By inner regularity we have that for every $i=1,\dots,n$ there exists a sequence of compact sets $K_i^k \subset E_i$ , $k \in \nn$, such that $\nchi_{K_i^k}\to \nchi_{E_i}$  in $L^p(\mm)$ as $k \to +\infty$. Hence
		\[
		\sum_{i=1}^n \alpha_i\nchi_{K_i^k}\to \sum_{i=1}^n \alpha_i\nchi_{E_i}, \text{ in $L^p(\mm)$}.
		\]
		Hence we conclude by Step 2.
		
		\textbf{Step 4}: Every $f\in L^p(\mm)$ is in  the  $L^p(\mm)$-closure of  $\Dwil\cap \LIP_c(\X)\cap D(\Delta)$. This is immediate from Step 3, since the functions $\sum_{i=1}^n \alpha_i\nchi_{E_i}$ with  $\{E_i\}_{i=1}^n$  disjoint Borel subsets of $\X$ and $\alpha_i \in \R$ are dense in $L^p(\mm).$
	\end{proof}

	\section{Lower semicontinuity  of the Willmore functional}\label{sec:lsc}

Here we show Theorem \ref{thm:lsc} saying that the Willmore functional $\wil$ is lower- semicontinuous under pmGH-convergence of ambient spaces and strong $\W$-convergence of the functions. This result was announced in \cite[Section 6.6]{Gigli23_working}.

In the rest of this section we assume to have fixed a sequence of pmGH-converging $\RCD(K,\infty)$ (pointed) spaces $(X_n,\sfd_n,\mm_n,x_n)$, $n\in \mathbb N\cup \{\infty\}$ via extrinsic approach, that is by assuming each $(\X_n,\sfd_n)$ isometrically embedded in a common metric space $(Y,\sfd)$ such that $x_n\to x_\infty$ and $\mm_n\rightharpoonup\mm_\infty$ in duality with $C_{bs}(Y)$  (see \cite{GigliMondinoSavare13,AmbrosioHonda17} for additional details). We also assume the reader to be familiar with the notion of $L^2$-convergence and $\W$-convergence of functions under pmGH convergence (see \cite{AmbrosioHonda17,Gigli23_working}).

We will denote by $\wil_n$ the Willmore functional in the space $\X_n$ and rewrite Theorem \ref{thm:lsc} below for the convenience of the reader.
	
	\begin{theorem}[$\Gamma-\liminf$ of the Willmore functional]\label{thm:lsc 2}
		Suppose that $u_n\to u_\infty$ in $\W$. Then
		\[
		\wil_\infty(u_\infty)\le \liminf_n \wil_n(u_n).
		\]
	\end{theorem}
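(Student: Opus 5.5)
The plan is to exploit the variational definition of $\wil$ as a supremum over test vector fields. It suffices to show that for every fixed $v\in\testv(\X_\infty)$ there exist $v_n\in\testv(\X_n)$ with
\[
\int \Big(\TD(v_n)-\frac{|v_n|^2}{2}\Big)|\d u_n|\,\d\mm_n\to \int \Big(\TD(v)-\frac{|v|^2}{2}\Big)|\d u_\infty|\,\d\mm_\infty,
\]
where $\TD$ on the left is computed with respect to $u_n$ and on the right with respect to $u_\infty$. Taking the $\liminf$ and then the supremum over $v$ gives the claim. We may assume $\liminf_n\wil_n(u_n)<\infty$ and pass to a subsequence realizing it.

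\textbf{Step 1: recovery of test vector fields.} Write $v=\sum_i f_i\nabla g_i$ with $f_i,g_i\in\test(\X_\infty)$. By the standard approximation results under pmGH convergence (cf.\ \cite{AmbrosioHonda17,Gigli23_working}), one can find $f_{i,n},g_{i,n}\in\test(\X_n)$ with uniform $L^\infty$ and Lipschitz bounds, converging in $\W$ to $f_i,g_i$. Moreover $\Delta g_{i,n}\to\Delta g_i$ strongly in $\W$. A natural construction is to apply the heat flow $h_t$ in $\X_n$ to $L^2$-approximations and use the stability of heat flows and of the Bakry--\'Emery estimates. Set $v_n\coloneqq\sum_i f_{i,n}\nabla g_{i,n}$. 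Then:
\begin{itemize}
\item $v_n\to v$ strongly in $L^2$ as vector fields, with $|v_n|$ uniformly bounded;
\item $\div(v_n)=\sum_i\big(\la\nabla f_{i,n},\nabla g_{i,n}\ra+f_{i,n}\Delta g_{i,n}\big)\to\div(v)$ strongly in $L^2$;
\item $\nabla v_n\to\nabla v$ strongly in $L^2$, by the strong convergence of Hessians of test functions along with $\sup_n\|\Delta g_{i,n}\|_{\W}<\infty$, which is the stability of the second-order structure established in \cite{AmbrosioHonda17}/\cite{Gigli23_working}.
\end{itemize}

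\textbf{Step 2: passing to the limit in each term.} The term $\int|v_n|^2|\d u_n|\,\d\mm_n$ converges because $|v_n|^2\to|v|^2$ strongly in $L^2$ and $|\d u_n|\to|\d u_\infty|$ strongly in $L^2$, which follows from strong $\W$ convergence. The same argument handles $\int\div(v_n)|\d u_n|$. The delicate term is
\[
\int\nabla v_n(e_{u_n},e_{u_n})\,|\d u_n|\,\d\mm_n=\int\nabla v_n\Big(\nabla u_n,\frac{\nabla u_n}{|\d u_n|}\Big)\d\mm_n,
\]
since $e_{u_n}$ is not continuous in $u_n$.

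\textbf{Step 3: the main obstacle.} To handle it, I would use the $\eps$-regularization from Lemma \ref{lem:int by parts}. Replace $|\d u|$ in the denominator by $|\d u|+\eps$; the error is bounded by
\[
\int|\nabla v_n|\,|\d u_n|\frac{\eps|\d u_n|}{|\d u_n|+\eps}\,\d\mm_n .
\]
This is small uniformly in $n$: strong convergence of $|\nabla v_n|$ and $|\d u_n|$ gives uniform integrability of $|\nabla v_n||\d u_n|$, while the factor $\eps/(|\d u_n|+\eps)\le 1$ tends to $0$ off the set $\{|\d u_n|\text{ small}\}$, whose contribution is at most $\int_{\{|\d u_n|<\delta\}}|\nabla v_n||\d u_n|$, which is small.

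For fixed $\eps$, the integrand is $\nabla v_n(\nabla u_n,\nabla u_n)$ times $(|\d u_n|+\eps)^{-1}$, a bounded function that converges strongly, being a Lipschitz function of $|\d u_n|$. It therefore remains to know that the bilinear quantity $\nabla v_n(\nabla u_n,\nabla u_n\,\psi_n)$ converges in the weak/strong pairing. This holds because $\nabla v_n$ converges strongly in $L^2$, $\nabla u_n$ converges strongly in $L^2$, and the additional bounded factors converge strongly. Making this rigorous, i.e.\ the convergence of the $L^1$ contraction of strongly converging $L^2$ tensor fields, is where the technical work lies. I would reduce it, via polarization and the identity $\nabla v(\nabla u,\nabla u)=\la\nabla\la v,\nabla u\ra,\nabla u\ra-\ldots$, to scalar quantities built from functions converging strongly in $\W$. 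Alternatively, I would invoke the general stability of strong $L^2$ convergence of tensors under pmGH convergence from \cite{Gigli23_working}.
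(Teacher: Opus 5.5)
\textbf{The gap: strong convergence of $\nabla v_n$.} Your reduction to a recovery sequence matches the paper, and so does your treatment of $\int|v_n|^2|\d u_n|$ and $\int\div(v_n)|\d u_n|$. The problem is the delicate term $\int\nabla v_n(e_{u_n},e_{u_n})|\d u_n|\,\d\mm_n$. Your handling of it rests on the claim that $\nabla v_n\to\nabla v$ \emph{strongly} in $L^2$, and this is false in general.

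Along test sequences you only get a uniform $L^2$ bound on Hessians from Bochner, $\int|{\rm Hess}\,g|^2\le\int(\Delta g)^2-K\int|\nabla g|^2$. This gives weak convergence up to subsequences and lower semicontinuity of the Hessian energy, and nothing more; that is exactly what Lemma \ref{lem:prop of test sequence} records.

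Strong convergence fails because curvature can appear in the limit. Convex polyhedral surfaces $P_i\to S^2$ are $\RCD(0,2)$ and flat away from finitely many cone points, which have zero capacity. Integrating Bochner on $P_i$ therefore gives $\int|{\rm Hess}\,f_i|^2=\int(\Delta f_i)^2$, whereas on $S^2$ one has $\int|{\rm Hess}\,f|^2=\int(\Delta f)^2-\int|\nabla f|^2$. For suitably chosen $L^2$-normalized first eigenfunctions (a test sequence, with $\Delta f_i\to\Delta f$ even strongly in $\W$), the Hessian energies on $P_i$ tend to $4$, while the limit has $2$.

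So neither of your proposed completions works:
\begin{itemize}
\item the ``contraction of two strongly converging tensors'' has only one strongly converging factor;
\item the reduction through $\nabla v(\nabla u,\nabla u)=\la\nabla\la v,\nabla u\ra,\nabla u\ra-\dots$ needs second-order regularity of $u_n$ ($\Delta u_n\in L^2$, as in Lemma \ref{lem:int by parts}), which is not available for arbitrary $u_n\in\W$.
\end{itemize}

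\textbf{The missing idea.} Put all the strong convergence on the $u_n$-side and pair it with the merely weakly converging $\nabla v_n$; this is Lemma \ref{lem:booh}.

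\begin{itemize}
\item \emph{Localize.} The factor $(|\d u_n|+\eps)^{-1}$ is not itself $L^2$-convergent: it equals $1/\eps$ where $\d u_n=0$, and $\mm_n$ may be infinite. Write instead $\frac{\phi}{|\d u_n|+\eps}=\phi\big(\frac{1}{|\d u_n|+\eps}-\frac{1}{\eps}\big)+\frac{\phi}{\eps}$ with a cutoff $\phi\in\LIP_{bs}(Y)$. The bracket is a Lipschitz function of $|\d u_n|$ vanishing at $0$, so it converges strongly.
\item \emph{Pair.} By the product and tensor rules, $\phi\,\nabla u_n\otimes\frac{\nabla u_n}{|\d u_n|+\eps}$ then converges strongly. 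Weak--strong pairing gives convergence of the localized integrals.
\item \emph{Remove the cutoff.} That pairing only gives convergence as measures in duality with $C_{bs}(Y)$. You must remove $\phi$ with a tail estimate based on $\|(1-\phi)|\d u_n|\|_{L^2}\to\|(1-\phi)|\d u_\infty|\|_{L^2}$.
\end{itemize}

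\textbf{What survives, and a smaller issue.} Your $\eps\to0$ error estimate holds with only $\sup_n\|\nabla v_n\|_{L^2}<\infty$. The correct pointwise bound is $|\nabla v_n|\frac{\eps|\d u_n|}{|\d u_n|+\eps}$, without your extra factor $|\d u_n|$. Moreover $\int_{\{|\d u_n|<\delta\}}|\d u_n|^2\le\int\min(|\d u_n|,\delta)^2$, and the right-hand side converges to $\int\min(|\d u_\infty|,\delta)^2$, which is small for small $\delta$.

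Separately, a recovery sequence whose limit is exactly $v$ is not a standard fact. The paper instead uses test sequences whose limits $z^k_\infty$ approximate $v$ in $\W_C(T\X)$ with $\div(z^k_\infty)\to\div(v)$ (Lemma \ref{lem:approx by test vectors}). This suffices because the limit functional is continuous under that convergence.
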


For the proof we need the notion of test sequences introduced in \cite{Gigli23_working} (see also \cite[Section 19]{AmbrosioHonda17}).

	\begin{definition}[Test sequences of functions and tensors]\label{def:test sequence}
		We say that $n \mapsto f_n \in \mathrm{Test}(X_n)$, $n \in \mathbb{N} \cup \{\infty\}$, is a \emph{test sequence} if $\sup_n \|f_n\|_{L^\infty(\mm_n)} + \mathrm{Lip}(f_n) + \|\Delta f_n\|_{L^\infty(\mm_n)} < \infty$ and
		\begin{equation}
			\begin{aligned}
				f_n &\xrightarrow{L^2} f_\infty \\
				\Delta f_n &\xrightarrow{L^2} \Delta f_\infty
			\end{aligned}
			\qquad \text{and} \qquad
			\begin{array}{l}
				\mathrm{Ch}_n(f_n) \to \mathrm{Ch}_\infty(f_\infty) \\
				\sup_n \mathrm{Ch}_n(\Delta f_n) < \infty.
			\end{array}
		\end{equation}
		Then, $n \mapsto z_n \in L^2(TX_n)$ is a \emph{test sequence of vectors} if it is of the form $z_n = \sum_{i=1}^m f_{i,n} \nabla g_{i,n}$ for $(f_{i,n})$, $(g_{i,n})$ test sequences.
		
		More generally, for $d > 1$ and $n \mapsto z_n \in L^2(T^{\otimes d} X_n)$, $n \in \mathbb{N} \cup \{\infty\}$ we say that $(z_n)$ is a \emph{test sequence of tensors} if it is of the form $z_n = \sum_{i=1}^m f_{i,n} \nabla g_{1,i,n} \otimes \cdots \otimes \nabla g_{d,i,n}$ for $(g_{j,i,n})$ and $(f_{i,n})$ test sequences.
	\end{definition}
	
	\begin{definition}[$L^2$-convergence of tensors]
		We say that $n \mapsto v_n \in L^2(T^{\otimes d} X_n)$ \emph{converges weakly in $L^2$} to $v_\infty \in L^2(T^{\otimes d} X_\infty)$,  $v_n\rightharpoonup v_\infty$ in $L^2$ for short, provided
		\[
		\sup_n \|v_n\|_{L^2(T^{\otimes d} X_n)} < \infty
		\qquad \text{and} \qquad
		\lim_n \int \langle v_n, z_n \rangle \, \mathrm{d} m_n = \int \langle v_\infty, z_\infty \rangle \, \mathrm{d} m_\infty
		\]
		for any test sequence $(z_n)$ of $d$-tensors.
		
		The convergence is \emph{strong},  $v_n\to  v_\infty$ in $L^2$ for short, if moreover $\|v_n\|_{L^2(T^{\otimes d} X_n)} \to \|v_\infty\|_{L^2(T^{\otimes d} X_\infty)}$.
	\end{definition}
    It is observed in \cite[d) p.\ 78]{Gigli23_working} that $z_n\to z_\infty$ in $L^2$ for any test sequence of tensors $z_n.$
	
\begin{lemma}\label{lem:approx by test vectors}
For any $v\in \testv(\X_\infty)$ there exist test sequences of vectors $(z_n^k)_n$ such that $z_\infty^k\to v$ in $\W_C(T\X)$ and $\div(z_n^k)\to \div(v)$ in $L^2(\mm_\infty)$ as $k\to +\infty.$
\end{lemma}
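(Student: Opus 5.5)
By linearity and the Leibniz rules for divergence and covariant derivative, it suffices to treat a single term $v=f\nabla g$ with $f,g\in\test(\X_\infty)$. Indeed, if each term $f_i\nabla g_i$ is approximated by $z^{k}_{i,\infty}$ in the required sense, then the sums $\sum_i z^k_{i,n}$ are test sequences of vectors approximating $v$. For $v=f\nabla g$ we have
\[
\div(v)=\la\nabla f,\nabla g\ra+f\Delta g,\qquad \nabla v=\nabla f\otimes\nabla g+f\,{\rm Hess}\, g .
\]
Hence convergence in $\W_C(T\X)$ together with convergence of divergences follows once we find test sequences $(f^k_n)_n,(g^k_n)_n$ with the following properties: $f^k_\infty\to f$ and $g^k_\infty\to g$ in $\W(\X_\infty)$; $\Delta g^k_\infty\to\Delta g$ in $L^2$; ${\rm Hess}\,g^k_\infty\to{\rm Hess}\,g$ in $L^2$; and $\sup_k\|f^k_\infty\|_{L^\infty}+\Lip(f^k_\infty)<\infty$, so that products can be handled by dominated convergence along a subsequence. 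The Hessian convergence is controlled through the Bochner inequality, which gives $\int|{\rm Hess}\,h|^2\le\int(\Delta h)^2-K|\d h|^2$. It therefore suffices that $g^k_\infty\to g$ in $\W$ and $\Delta g^k_\infty\to\Delta g$ in $L^2$.

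The key step is to show that every $h\in\test(\X_\infty)$ admits test sequences $(h^k_n)_n$ whose limits $h^k_\infty$ converge to $h$ in this strong sense. We use the standard construction of \cite{AmbrosioHonda17,Gigli23_working}, combining approximation by bounded-support functions, heat flow regularization, and the stability of the heat flow under pmGH convergence.

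First we reduce to compactly supported data. Multiply $h$ by good cut-offs $\chi_R$, which exist in $\RCD(K,\infty)$ spaces with $\Delta\chi_R$ bounded. The products $\chi_Rh$ converge to $h$ in $\W$ with Laplacians converging in $L^2$. They are bounded Lipschitz with bounded support, but not yet in $\test$.

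Next, take $\varphi_R\coloneqq\chi_Rh\in\LIP_{bs}(\X_\infty)$. Since $Y$ is a common ambient space, we may regard $\varphi_R$ as a Lipschitz function with bounded support on $Y$ (McShane extension) and restrict it to each $\X_n$, obtaining $\varphi_{R,n}\to\varphi_R$ in $L^2$ with uniform Lipschitz and $L^\infty$ bounds. Define $h_n\coloneqq H_\eps(\varphi_{R,n})$, using the mollified heat flow of Lemma \ref{lem:approximation with bounded testv}. By stability of the heat flow under pmGH convergence \cite{AmbrosioHonda17,GigliMondinoSavare13}, we get $h_t\varphi_{R,n}\to h_t\varphi_R$ in $\W$ for every $t>0$. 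Combined with the Bakry--Émery estimates, which give uniform bounds on $\Lip$, $\|\Delta\cdot\|_{L^\infty}$ and $\mathrm{Ch}(\Delta\cdot)$ for fixed $\eps$, this shows that $(h_n)_n$ is a test sequence with limit $H_\eps(\varphi_R)$.

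Finally, send $\eps\to0$ and then $R\to\infty$. We have $H_\eps\varphi_R\to\varphi_R$ in $\W$, and $\Delta H_\eps\varphi_R=H_\eps\Delta\varphi_R\to\Delta\varphi_R$ in $L^2$, because $\varphi_R\in\dom(\Delta)$. A diagonal choice in $(\eps,R)$ gives the index $k$. The uniform $L^\infty$ and Lipschitz bounds on the $f$-approximants follow from the Bakry--Émery contraction $\Lip(H_\eps\varphi)\le e^{-K\eps}\Lip(\varphi)$.

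The main obstacle is verifying that, for fixed $\eps$, the sequence $H_\eps(\varphi_{R,n})$ really meets every requirement of Definition \ref{def:test sequence}. The delicate requirements are $\Delta h_n\to\Delta h_\infty$ in $L^2$, $\mathrm{Ch}_n(h_n)\to\mathrm{Ch}_\infty(h_\infty)$, and uniform bounds on $\mathrm{Ch}_n(\Delta h_n)$. I would first try to obtain these from Mosco convergence of the Cheeger energies \cite{GigliMondinoSavare13} and the resulting strong convergence of the heat semigroups $h^n_t$, including the analogous statement for $\Delta h_t=\frac{\d}{\d t}h_t$ via the $\eta_\eps$-mollification in time. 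The uniform bounds would come from the standard $L^\infty$–Lipschitz regularization estimates, which depend only on $K$ and $\eps$.
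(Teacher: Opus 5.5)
Your proposal is correct and follows essentially the paper's argument. You extend the functions to Lipschitz functions on $Y$, apply the mollified heat flow $\tilde h_{n,\eps}$ on each $\X_n$, and invoke the known fact that this produces test sequences, which the paper simply cites from Gigli's working notes. You then let $\eps\to0$ on $\X_\infty$, and your Leibniz/Bochner verification spells out what the paper leaves as ``can be checked''.

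Your extra cut-off step, which the paper does not have, is a sensible precaution for $L^2(\mm_n)$-integrability. It does not, however, require cut-offs with bounded Laplacian, whose existence in general $\RCD(K,\infty)$ spaces you would otherwise have to justify. A plain Lipschitz cut-off suffices: for fixed $\eps$ the maps $\tilde h_\eps$ and $\Delta\circ\tilde h_\eps$ are bounded from $L^2$ into $\W$ and $L^2$ respectively, so you can send $R\to\infty$ before $\eps\to0$.
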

\begin{proof}
    Let $v=\sum_{i=1}^m f_i\nabla g_i \in \testv(\X_\infty)$. Since $f_i,g_i\in \LIP(\X_\infty)$, we can extend both to bounded functions in $\LIP(Y).$ Then $z_n^k\coloneqq \sum_{i=1}^m\tilde  h_{n,1/k}(f_i)\nabla \tilde h_{n,1/k}(g_i)$ are test sequences of vectors as shown in \cite[f) pag.\ 78]{Gigli23_working} ($\tilde h_{n,1/k}$ denotes the mollified heat flow in the space $\X_n$). The fact that $z_\infty^k\to v$  in $\W_C(T\X)$  and $\div(z_n^k)\to \div(v)$ in $L^2(\mm_\infty)$ can be checked using that $\Delta \tilde h_{\infty,1/k} f_i\to \Delta  f_i$, $\Delta \tilde h_{\infty,1/k} g_i\to \Delta  g_i$  in $L^2(\mm_\infty)$ and that  $\tilde h_{\infty,1/k} f_i\to f_i$,  $\tilde h_{\infty,1/k} g_i\to g_i$ in $\W(\X_\infty)$.
\end{proof}

	We collect in the following result several properties of convergence of tensors that we will need in the sequel.
	\begin{prop}[Convergence properties of tensors]\label{prop:conv prop of tensors}
		Let $v_n,w_n \in L^2(T^{\otimes d}\X_n)$, $f_n\in L^2(\mm_n)$, $n\in \mathbb N\cup \{\infty\}.$ Then the following hold:
		\begin{enumerate}[label=\roman*)]
			\item if $v_n \to v_\infty$ in $L^2$ and  $f_n\to f_\infty$ in $L^2$, $|f_n|\le C<\infty$, then $f_nv_n\to f_\infty v_\infty$ in $L^2;$
            \item  if $v_n \to v_\infty$ in $L^2$ then $|v_n|\to |v_\infty|$ in $L^2$;
            \item  if $\sup_n \||v_n|\|_{L^2(\mm_n)}<\infty$ then, up to a subsequence, $v_n\rightharpoonup v$ in $L^2$;
			\item if $v_n \to v_\infty$ in $L^2$,  $w_n\to w_\infty$ in $L^2$ and $|w_n|\le C$, then $v_n\otimes w_n\to v_\infty\otimes w_\infty$ in $L^2$;
			\item if $v_n\rightharpoonup v_\infty$ in $L^2$ and $w_n\to w_\infty$ in $L^2$ then $\la v_n,w_n\ra  \mm_n\rightharpoonup \la v_\infty, w_\infty\ra \mm_\infty $ as measures; if also  $|v_n|\le C$, then  $\la v_n,w_n\ra  \rightharpoonup \la v_\infty, w_\infty\ra  $ in $L^2$ and if additionally $v_n\to v_\infty$ and $|w_n|\le C$ then $\la v_n,w_n\ra \to \la v_\infty,w_\infty\ra $ in $L^2$;
			\item if $f_n\to f_\infty$ in $\W$ then $\nabla f_n\to \nabla f_\infty$ in $L^2.$
		\end{enumerate}
	\end{prop}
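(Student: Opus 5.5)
Most of these items are standard facts about convergence of tensors along pmGH-converging $\RCD(K,\infty)$ sequences, and I would derive them from the known properties of test sequences recalled from \cite{Gigli23_working,AmbrosioHonda17}. The main tools are the following. Test sequences of tensors converge strongly in $L^2$ (observation d) p.\ 78). Weak limits are characterized by testing against test sequences. The norm is lower semicontinuous under weak convergence. Strong convergence can be characterized as: $v_n\to v_\infty$ strongly if and only if for every test sequence $z_n$ we have $\limsup_n\|v_n-z_n\|\le \|v_\infty-z_\infty\|$, i.e.\ $\lim_n\|v_n-z_n\|=\|v_\infty-z_\infty\|$. I would first prove item iii) by a diagonal argument. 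Choose a countable family of test sequences whose $\infty$-limits are dense in $L^2(T^{\otimes d}\X_\infty)$, which is possible by separability and the density of test tensors. Extract a subsequence along which $\int\la v_n,z_n\ra\d\mm_n$ converges for each of them. The limit functional is bounded by $\sup_n\|v_n\|\,\|z_\infty\|$, so Riesz representation gives $v$. To extend the convergence to all test sequences, I would use an approximation argument: the differences between test sequences have norms converging to the norm of the difference of their limits.

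Item vi) would be proved as follows. Approximate $f_\infty$ by $\tilde h_{\infty,1/k}$-regularized functions built as in Lemma \ref{lem:approx by test vectors}. These give test sequences $g^k_n$ with $\nabla g^k_n\to\nabla g^k_\infty$ strongly. Then use that $\W$-convergence gives $\limsup_n\|\nabla f_n-\nabla g^k_n\|\le\|\nabla f_\infty-\nabla g^k_\infty\|$. This last bound follows from polarization: $\mathrm{Ch}_n(f_n)\to\mathrm{Ch}_\infty(f_\infty)$, $\mathrm{Ch}_n(g_n^k)\to \mathrm{Ch}_\infty(g_\infty^k)$, and $\int\la\nabla f_n,\nabla g^k_n\ra=-\int f_n\Delta g^k_n\to -\int f_\infty\Delta g^k_\infty$, since the Laplacians converge strongly in $L^2$. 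A diagonal argument then gives strong convergence.

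Items i), ii), iv) and v) rest on a single mechanism: approximate the strongly converging $v_n$ by test sequences $z^k_n$ in the uniform-in-$n$ sense above, then handle the products for test objects directly. For i), if $z_n$ is a test tensor, then $f_nz_n$ is not itself a test tensor. I would first show it converges weakly, by approximating $f_n$ by test functions, using the uniform bound $|f_n|\le C$ and the $L^\infty$ bounds on test sequences. Then I would check convergence of norms: $\int f_n^2|z_n|^2\d\mm_n\to\int f_\infty^2|z_\infty|^2\d\mm_\infty$. This holds because $f_n^2\to f_\infty^2$ in $L^2$ (using the bound $C$), while $|z_n|^2$ is a uniformly bounded, strongly converging function, since it is a finite combination of products of test functions and of $\la\nabla g,\nabla g'\ra$, which converge by polarization. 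The error $\|f_n(v_n-z_n)\|\le C\|v_n-z_n\|$ is then controlled. For ii), $||v_n|-|z_n||\le|v_n-z_n|$, and $|z_n|$ converges strongly as a function. Together with the stability of strong convergence under such uniform approximations, this gives the result. Item iv) is analogous: $v_n\otimes w_n$ is close to $z_n\otimes w_n$ with error at most $C\|v_n-z_n\|$, and $z_n\otimes w_n$ is handled by writing $z_n$ as a combination of functions times test tensors and approximating $w_n$ in turn. For v), I would first pass to measures: test against $\phi\in C_{bs}(Y)$ and approximate $w_n$ by $z_n$. The quantity $\int\phi\la v_n,z_n\ra$ converges because $\phi z_n$ is approximable by test sequences (Lipschitz $\phi$ by density, then i)). The $L^2$-weak and strong statements follow from i) together with the uniform bounds.

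The main obstacle is making rigorous the "uniform approximation" principle: given $v_n\to v_\infty$ strongly and $\eps>0$, there is a test sequence $z_n$ with $\limsup_n\|v_n-z_n\|<\eps$. I would derive it by expanding $\|v_n-z_n\|^2=\|v_n\|^2-2\int\la v_n,z_n\ra+\|z_n\|^2$ and passing to the limit term by term, using strong convergence of $v_n$ and of $z_n$. This yields $\|v_\infty-z_\infty\|^2$, which is small by density of test tensors in $L^2(T^{\otimes d}\X_\infty)$.
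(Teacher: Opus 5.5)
Your proposal is correct, but it takes a genuinely different and considerably longer route than the paper.

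\textbf{The paper's route.} The paper takes ii), iii), vi) and the measure-convergence part of v) as known, citing \cite[Section 6.1]{Gigli23_working}. It then gets the remaining claims by short duality arguments:
\begin{enumerate}[label=\alph*)]
\item The $L^2$-weak part of v) is the measure convergence plus the bound $\|\la v_n,w_n\ra\|_{L^2}\le C\|w_n\|$.
\item The strong part of v) uses the polarization $\la v_n,w_n\ra=\frac14(|v_n+w_n|^2-|v_n-w_n|^2)$, together with ii) and the strong convergence of squares of $L^\infty$-bounded strongly converging functions.
\item For i), it tests $f_nv_n$ against test vector fields, which is a weak--strong pairing by v). It identifies the weak limit via iii) and gets norm convergence from $f_n|v_n|\to f_\infty|v_\infty|$, which again uses ii).
\item For iv), it splits $\la v_n\otimes w_n,z_n\ra$ into products of a weakly and a strongly converging function, and uses $|v_n\otimes w_n|=|v_n||w_n|$.
\end{enumerate}

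\textbf{Your route.} You reprove the cited items: iii) by a diagonal and Riesz argument, and vi) by heat-flow regularization and $\int\la\nabla f_n,\nabla g_n\ra\,\d\mm_n=-\int f_n\Delta g_n\,\d\mm_n$. You then organize everything around one principle: strong convergence is equivalent to uniform-in-$n$ approximability by test sequences. Every item thus reduces to explicit algebraic expressions in test objects.

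\textbf{Trade-offs.} Your approach is self-contained; you essentially reconstruct the relevant part of the working notes. The cost is length and several small technical steps the paper never needs:
\begin{enumerate}[label=\alph*)]
\item you must show that $z_n\otimes y_n$ and $\la z_n,y_n\ra$ converge for test sequences, for instance via products of test sequences;
\item you must pass from $|z_n|^2$ to $|z_n|$ through a square root, e.g.\ by approximating $\sqrt t$ uniformly with the Lipschitz functions $\sqrt{t+\eps^2}-\eps$.
\end{enumerate}

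\textbf{Where the real input sits.} Your phrase ``converge by polarization'' hides the key analytic ingredient. Polarization only transfers convergence from $|\nabla h_n|^2$ to $\la\nabla g_n,\nabla g'_n\ra$. What is actually needed is that $|\nabla h_n|^2\to|\nabla h_\infty|^2$ as functions for strongly $\W$-converging (e.g.\ test) sequences. This is the localized energy convergence of Ambrosio--Honda \cite{AmbrosioHonda17}, i.e.\ item ii) for gradients. The paper uses the same fact implicitly through its citation of ii), so this is not a gap, but you should name it.

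\textbf{Two simplifications.} In i), you do not need to approximate $f_n$ by test functions: $\int f_n\la z_n,y_n\ra\,\d\mm_n$ is already a pairing of two strongly converging functions. In v), the $L^2$-weak statement follows directly from the measure convergence and the uniform $L^2$ bound, without invoking i).
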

    \begin{proof}
        ii), iii), vi), and the  first part of v)  are proved in \cite[Section 6.1]{Gigli23_working}. For the second part of v) note that if $|v_n|\le C$ then $\sup_n \|\la v_n,w_n\ra \|_{L^2(\mm_n)}<\infty$ which combined with the first part of v) implies weak $L^2$-convergence by definition. If also $v_n\to v_\infty$ and $|w_n|\le C$ we have 
        $$\la v_n,w_n\ra=\frac14 (|v_n+w_n|^2-|v_n-w_n|^2)$$
        and the right-hand side converges strongly in $L^2$. Indeed $|v_n\pm w_n|$ converge in $L^2$ by linearity and ii) and are bounded in $L^\infty$, hence  $|v_n\pm w_n|^2$ converge in $L^2$ (see \cite[Prop.\ 2.18-viii)]{NobiliViolo21}).  
        To prove i) fix any test sequence of vectors $z_n$. By item v) $\la v_n,z_n\ra$ converges weakly in $L^2$ to $\la v_\infty,z_\infty\ra$.  Hence $\int f_n\la v_n,z_n\ra\d \mm_n$ converges to $\int f_\infty\la v_\infty,z_\infty\ra\d \mm_\infty$. By iii) and thanks to $|f_n|\le C$, up to passing to a subsequence, $f_nv_n$ converges weakly in $L^2$ to some $w,$ which by the previous observation must coincide with $f_\infty v_\infty.$  Moreover by ii), $|v_n|\to |v_\infty|$ strongly in $L^2$. Hence $f_n|v_n|\to f_\infty|v_\infty|$ strongly in $L^2$. Indeed the product of two $L^2$-strongly converging sequences of functions, one uniformly bounded in $L^\infty$, converges strongly in $L^2$ as well (it follows from the properties of $L^2$-convergence with fixed measure which carry to this setting as explained  in \cite[page 28]{Gigli23_working}). Therefore $\|f_n|v_n|\|_{L^2(\mm_n)}\to \|f_\infty |v_\infty|\|_{L^2(\mm_\infty})$, which yields the required strong $L^2$-convergence.
         
        To prove iv) fix a  {test sequence of tensors}  $z_n = \sum_{i=1}^m f_{i,n} \nabla g_{1,i,n} \otimes \cdots \otimes \nabla g_{2d,i,n}$. Then
        \[
         \la v_n\otimes w_n , z_n\ra =  \sum_{i=1}^m   \la v_n,  f_{i,n} \nabla g_{1,i,n} \otimes \cdots \otimes \nabla g_{d,i,n} \ra \la w_n ,  \nabla g_{d+1,i,n} \otimes \cdots \otimes \nabla g_{2d,i,n}\ra
        \]
        for all $n\in \mathbb N\cup \{\infty\}.$
      The sequences $  \la v_n,  f_{i,n} \nabla g_{1,i,n} \otimes \cdots \otimes \nabla g_{d,i,n} \ra$ and $ \la w_n ,  \nabla g_{d+1,i,n} \otimes \cdots \otimes \nabla g_{2d,i,n}\ra$ converge respectively weakly in $L^2$ and strongly in $L^2$ to the corresponding expressions for $n=\infty$, by item v) for all $i$. Hence
      \[
      \int  \la v_n\otimes w_n , z_n\ra\d \mm_n\to  \int  \la v_\infty\otimes w_\infty , z_\infty\ra \d \mm_\infty.
      \]
      This shows that $v_n\otimes w_n \rightharpoonup v_\infty\otimes w_\infty $ in $L^2$. On the other hand $|v_n\otimes w_n|=|v_n||w_n|$ which converge strongly in $L^2$ to $|w_\infty ||v_\infty|=|v_\infty\otimes w_\infty|$ because of ii) and $|w_n|\le C$. This proves $v_n\otimes w_n \to v_\infty\otimes w_\infty $ in $L^2$.
    \end{proof}

	\begin{lemma}\label{lem:prop of test sequence}
		Let $v_n\in L^2(T \X_n)$, $n\in \nn\cup \{\infty\}$ be a test sequence of vectors. Then, up to passing to a subsequence,  $|v_n|\to |v_\infty|$ in $L^2$, $\div(v_n)\to \div(v_\infty)$ in $L^2$ and $\nabla v_n\rightharpoonup \nabla v_\infty$ in $L^2$.
	\end{lemma}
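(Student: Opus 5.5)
We prove the three claims separately. Write $v_n=\sum_{i=1}^m f_{i,n}\nabla g_{i,n}$ with $(f_{i,n})_n,(g_{i,n})_n$ test sequences.

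\textbf{Convergence of $|v_n|$.} The observation recalled from \cite[d) p.\ 78]{Gigli23_working} gives $v_n\to v_\infty$ strongly in $L^2$. Then Proposition \ref{prop:conv prop of tensors}-ii) yields $|v_n|\to|v_\infty|$ in $L^2$.

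\textbf{Convergence of the divergence.} By the Leibniz rule,
\[
\div(v_n)=\sum_{i=1}^m \la \nabla f_{i,n},\nabla g_{i,n}\ra + f_{i,n}\Delta g_{i,n}.
\]
Since $\Ch_n(f_{i,n})\to \Ch_\infty(f_{i,\infty})$ and $f_{i,n}\to f_{i,\infty}$ in $L^2$, we have $W^{1,2}$-convergence, so Proposition \ref{prop:conv prop of tensors}-vi) gives $\nabla f_{i,n}\to\nabla f_{i,\infty}$ and $\nabla g_{i,n}\to \nabla g_{i,\infty}$ in $L^2$. These are uniformly bounded in $L^\infty$ because the Lipschitz constants are uniformly bounded. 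Hence the last part of Proposition \ref{prop:conv prop of tensors}-v) gives strong $L^2$ convergence of $\la\nabla f_{i,n},\nabla g_{i,n}\ra$. The term $f_{i,n}\Delta g_{i,n}$ is a product of two $L^2$-strongly convergent sequences, one of which ($f_{i,n}$) is uniformly bounded in $L^\infty$. Such products converge strongly, as used in the proof of Proposition \ref{prop:conv prop of tensors}-i).

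\textbf{Weak convergence of $\nabla v_n$.} Write
\[
\nabla v_n=\sum_i \nabla f_{i,n}\otimes\nabla g_{i,n}+f_{i,n}\,\mathrm{Hess}(g_{i,n}).
\]
The first summand converges strongly by Proposition \ref{prop:conv prop of tensors}-iv). For the Hessian term we use the Bochner inequality in its integrated form on $\RCD(K,\infty)$ spaces:
\[
\int|\mathrm{Hess}\, g|^2\,\d\mm\le \int (\Delta g)^2\,\d\mm-K\int|\nabla g|^2\,\d\mm .
\]
This gives $\sup_n\|\mathrm{Hess}(g_{i,n})\|_{L^2}<\infty$, using $\sup_n\|\Delta g_{i,n}\|_{L^2}<\infty$. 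So $\nabla v_n$ is bounded in $L^2$, and by Proposition \ref{prop:conv prop of tensors}-iii) a subsequence converges weakly to some $T$.

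\textbf{Identifying the limit $T=\nabla v_\infty$ (main obstacle).} It suffices to identify $\int \la\nabla v_n, z_n\ra\,\d\mm_n$ for test sequences of $2$-tensors $z_n=h_n\nabla a_n\otimes\nabla b_n$ (sums of these). Integrating by parts, using the defining formula of the covariant derivative for test fields, we get
\[
\int \la\nabla v_n,z_n\ra\,\d\mm_n=-\int \la v_n,\nabla a_n\ra\,\div(h_n\nabla b_n)\,\d\mm_n-\int h_n\,\la v_n,\nabla_{\nabla b_n}\nabla a_n\ra\,\d\mm_n .
\]
Expand $\nabla_{\nabla b_n}\nabla a_n$ via $\mathrm{Hess}(a_n)(\nabla b_n,\cdot)$. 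Equivalently, use the formula
\[
2\,\mathrm{Hess}(a)(\nabla b,\nabla c)=\la\nabla\la\nabla a,\nabla b\ra,\nabla c\ra+\la\nabla\la\nabla a,\nabla c\ra,\nabla b\ra-\la\nabla a,\nabla\la\nabla b,\nabla c\ra\ra,
\]
and move one more derivative off the test objects. After this, every term is an integral of products of quantities that converge strongly in $L^2$ (gradients, Laplacians, bounded functions) against one weakly convergent factor. The divergence of $h_n\nabla b_n$ converges strongly by the same argument as in the divergence step above. Terms like $\la \nabla\la\nabla a_n,\nabla b_n\ra,\cdot\ra$ are handled by integrating by parts once more. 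This uses that $\la\nabla a_n,\nabla b_n\ra$ converges in $W^{1,2}$, which I expect follows from the energy bounds ($\sup_n\Ch_n(\Delta f_n)<\infty$ together with Bochner) and lower semicontinuity. Verifying this strong $W^{1,2}$-convergence of the products $\la\nabla a_n,\nabla b_n\ra$ is the delicate point; if it fails, I would instead take the weak Hessian closure results of \cite[Section 6]{Gigli23_working} for test sequences directly. In either case, passing to the limit in each term gives the identity with $n=\infty$, so $T=\nabla v_\infty$.
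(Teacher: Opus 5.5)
Your proposal is correct. The convergence of $|v_n|$ and the uniform $L^2$ bound on $\nabla v_n$ (Leibniz rule plus the integrated Bochner inequality) are exactly as in the paper. The difference is in the other two points, where the paper invokes \cite{Gigli23_working}:
\begin{itemize}
\item item e) on p.~78 for $\div(v_n)\to\div(v_\infty)$;
\item once $\sup_n\||\nabla v_n|\|_{L^2(\mm_n)}<\infty$ is known, the closure property (ix) on p.~79 to get $\nabla v_n\rightharpoonup\nabla v_\infty$ along a subsequence.
\end{itemize}
You prove both by hand, using only calculus on test functions and Proposition~\ref{prop:conv prop of tensors}. This makes your argument self-contained. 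The paper's route is shorter, and its closure property covers any strongly converging fields with bounded covariant derivatives, not only test sequences.

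The ``delicate point'' you flag is not needed. Write $v_n=\sum_i f_{i,n}\nabla g_{i,n}$ and $\phi_n\coloneqq h_nf_{i,n}$. After the three-term Hessian formula and one more integration by parts, every term has one of two forms:
\begin{itemize}
\item $\int\la\nabla x_n,\nabla y_n\ra\,\div(\phi_n\nabla w_n)\,\d\mm_n$, with $\{x_n,y_n,w_n\}=\{a_n,b_n,g_{i,n}\}$;
\item the boundary term $\int\la v_n,\nabla a_n\ra\,\div(h_n\nabla b_n)\,\d\mm_n$.
\end{itemize}
Both factors in each term converge strongly in $L^2$:
\begin{itemize}
\item the inner products, by the last part of Proposition~\ref{prop:conv prop of tensors}-v), since the gradients converge strongly and are uniformly bounded;
\item the divergences $\div(\phi_n\nabla w_n)=\la\nabla\phi_n,\nabla w_n\ra+\phi_n\Delta w_n$, by your own divergence argument, using item i) to get $\nabla\phi_n=f_{i,n}\nabla h_n+h_n\nabla f_{i,n}\to\nabla\phi_\infty$ strongly.
\end{itemize}
So only strong $L^2$-convergence of $\la\nabla a_n,\nabla b_n\ra$ enters, never its $W^{1,2}$-convergence. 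That would require convergence information on Hessians that you do not have. Also, no weakly converging factor survives these integrations by parts.

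Finally, you check $\int\la\nabla v_n,z_n\ra\,\d\mm_n\to\int\la\nabla v_\infty,z_\infty\ra\,\d\mm_\infty$ for every test sequence of $2$-tensors, and you have the uniform bound. That is literally the definition of $\nabla v_n\rightharpoonup\nabla v_\infty$, and it holds along the whole sequence. Extracting a subsequence via iii) and identifying the limit $T$ is therefore unnecessary.
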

	\begin{proof}
    The fact that $|v_n|\to |v_\infty|$ in $L^2$ follows from Proposition \ref{prop:conv prop of tensors}-ii). Instead $\div(v_n)\to \div(v_\infty)$ in $L^2$ is proved in \cite[e) pag.\ 78]{Gigli23_working}. It remains to show that $\nabla v_n\rightharpoonup \nabla v_\infty$ in $L^2$ up to passing to a subsequence.
		We have $v_n=\sum_{i=1}^m f_i^n\nabla g_i^n$ with $f_i^n,g_i^n $  as in Definition \ref{def:test sequence}. Hence by the Leibniz rule for the covariant derivative
		$\nabla v_n= \sum_{i=1}^m \nabla f_i^n \otimes \nabla  g_i^n +f_i^n {\rm Hess}( g_i^n)$.
		Moreover by the assumptions on the sequence $g_i^n$
		\[
		\sup_n \int |{\rm Hess}( g_i^n)|^2 \d \mm_n \le \int |\Delta g_i^n|^2 - K|\nabla g_i^n|^2\d \mm_n<\infty.
		\]
		This shows that $\sup_n \||\nabla v_n|\|_{L^2(\mm_n)}<\infty$ and the result follows from \cite[(ix) in p.\ 79]{Gigli23_working}.
	\end{proof}

	\begin{lemma}\label{lem:booh}
		Let $v_n\in L^2(T^{\otimes 2}\X_n)$ be such that $v_n\rightharpoonup v_\infty$ in $L^2$ and suppose $u_n\to u_\infty$ in $\W.$ Then
		\begin{equation}
			\int \la v_n, \nabla u_n\otimes \tfrac{\nabla u_n}{|\d u_n|}\ra \d \mm_n \to  \int \la v_\infty, \nabla u_\infty \otimes \tfrac{\nabla u_\infty}{|\d u_\infty|}\ra \d \mm_\infty,
		\end{equation}
		where both integrands are taken to be zero respectively when $|\d u_n|=0$ and $|\d u_\infty|=0$.
	\end{lemma}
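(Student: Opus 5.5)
The plan is to write the integrand as a pairing of the weakly converging tensor $v_n$ with a strongly converging tensor, and then use the first part of Proposition \ref{prop:conv prop of tensors}-v). Concretely, set
\[
w_n\coloneqq \nabla u_n\otimes \frac{\nabla u_n}{|\d u_n|}=|\d u_n|\, e_{u_n}\otimes e_{u_n}\in L^2(T^{\otimes 2}\X_n),
\]
with $w_n=0$ where $|\d u_n|=0$. Since $|w_n|=|\d u_n|$, each $w_n$ lies in $L^2$. Once we show $w_n\to w_\infty$ strongly in $L^2$, Proposition \ref{prop:conv prop of tensors}-v) gives $\la v_n,w_n\ra\mm_n\rightharpoonup\la v_\infty,w_\infty\ra\mm_\infty$ as measures. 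To pass from this to convergence of the total integrals (testing against the constant $1$, which is not compactly supported), I would bound tails uniformly: $|\la v_n,w_n\ra|\le |v_n||\d u_n|$, $\sup_n\|v_n\|_{L^2}<\infty$, and strong $L^2$ convergence of $|\d u_n|$ gives uniform $L^2$-tightness of $|\d u_n|$ outside large balls, by the standard property of strongly converging sequences. Alternatively, one can check directly that the weak convergence of $\la v_n,w_n\ra$ in duality with functions in $L^2$ as in v) already contains total mass convergence.

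The core step is the strong convergence $w_n\to w_\infty$. I would approximate $w_n$ by truncated versions $w_n^\eps\coloneqq \nabla u_n\otimes \frac{\nabla u_n}{|\d u_n|\vee\eps}$. By Proposition \ref{prop:conv prop of tensors}-vi) we have $\nabla u_n\to\nabla u_\infty$ in $L^2$, and by ii) $|\d u_n|\to|\d u_\infty|$ in $L^2$. Hence $f_n^\eps\coloneqq 1/(|\d u_n|\vee\eps)$, a bounded continuous function of a strongly converging sequence, converges in $L^2_{\loc}$, and strongly when multiplied appropriately. Then i) gives $f_n^\eps\nabla u_n\to f_\infty^\eps\nabla u_\infty$ in $L^2$, with $|f_n^\eps\nabla u_n|\le 1$. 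Applying iv) yields $w_n^\eps\to w_\infty^\eps$ strongly in $L^2$ for each fixed $\eps$.

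Finally I would remove $\eps$ using the uniform estimate
\[
|w_n-w_n^\eps|\le |\d u_n|\,\nchi_{\{|\d u_n|<\eps\}}\le \min(|\d u_n|,\eps).
\]
The problem is that this smallness is not obviously uniform in $L^2$: $\int\min(|\d u_n|,\eps)^2\,\d\mm_n$ need not be small when the measures have infinite mass. Instead I would bound the pairing directly, $|\int\la v_n,w_n-w_n^\eps\ra|\le \|v_n\|_{L^2}\,\|\min(|\d u_n|,\eps)\|_{L^2}$. Strong convergence of $|\d u_n|$ gives $\|\min(|\d u_n|,\eps)\|_{L^2}\to\|\min(|\d u_\infty|,\eps)\|_{L^2}$, because $\min(\cdot,\eps)$ is $1$-Lipschitz and strong $L^2$ convergence is preserved under such compositions vanishing at $0$. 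The limit tends to $0$ as $\eps\to0$ by dominated convergence. A standard diagonal $\limsup$ argument then concludes. I expect this uniformity in $\eps$, together with justifying that the mollified nonlinear operations preserve strong convergence across varying spaces, to be the main technical obstacle. Note that because of this error estimate, the tail issue from the first paragraph only needs to be handled for the pairing with $w_n^\eps$.
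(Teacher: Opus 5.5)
Your proposal is correct and is essentially the paper's argument. You regularize $1/|\d u_n|$, obtain strong convergence of the regularized tensor from i), iv), vi) of Proposition \ref{prop:conv prop of tensors}, pair it with $v_n$ via v), and control both the tails and the regularization error through $\sup_n\||v_n|\|_{L^2(\mm_n)}$ and the strong convergence of $|\d u_n|$. The only difference is that you remove the regularization globally, via $\|\min(|\d u_n|,\eps)\|_{L^2}\to\|\min(|\d u_\infty|,\eps)\|_{L^2}$, whereas the paper first localizes with a cutoff $\phi\in\LIP_{bs}(Y)$ and uses the pointwise error bound $\delta$ on its support.

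One step needs tightening: i) cannot be applied to $f_n^\eps=1/(|\d u_n|\vee\eps)$ itself, because it is not in $L^2$ when $\mm_n(\X_n)=\infty$. Instead write $f_n^\eps\nabla u_n=\eps^{-1}\nabla u_n+\psi_\eps(|\d u_n|)\nabla u_n$ with $\psi_\eps(t)\coloneqq\frac{1}{t\vee\eps}-\frac1\eps$. This $\psi_\eps$ is Lipschitz, bounded and vanishes at $0$, so $\psi_\eps(|\d u_n|)\to\psi_\eps(|\d u_\infty|)$ in $L^2$ and both summands converge strongly. This is the same trick the paper uses with $\frac{1}{t+\delta}-\frac1\delta$.
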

	\begin{proof}
		Let us first show that for all $\delta>0$ and $\phi \in \LIP_{bs}(Y)$ it holds
		\begin{equation}\label{eq:delta psi conv tensors}
			\int \phi \la v_n, \nabla u_n\otimes \tfrac{\nabla u_n}{|\d u_n|+\delta}\ra \d \mm_n \to  \int \phi \la v_\infty, \nabla u_\infty \otimes \tfrac{\nabla u_\infty}{|\d u_\infty|+\delta}\ra \d \mm_\infty.
		\end{equation}
		
		It is sufficient to show that $\frac{\phi}{|\d u_n|+\delta}\to \frac{\phi}{|\d u_\infty|+\delta}$ in $L^2$. Indeed  combining  i)-iv)-vi) in Proposition \ref{prop:conv prop of tensors} would imply that $\phi \nabla u_n\otimes \frac{\nabla u_n}{|\d u_n|+\delta} \to  \phi \nabla u_\infty \otimes \frac{\nabla u_\infty}{|\d u_\infty|+\delta}$ and then  \eqref{eq:delta psi conv tensors} would follow  by v) in Proposition \ref{prop:conv prop of tensors}.   The $L^2$-convergence follows observing that  $\frac{1}{|\d u_n|+\delta}-\frac{1}{\delta}\to \frac{1}{|\d u_\infty|+\delta}-\frac1\delta$ in $L^2$, indeed $\psi(t)\coloneqq \frac{1}{t+\delta}-\frac{1}{\delta} \in \LIP(\rr)$ and $\psi(0)=0$ (apply e.g.\ \cite[Prop.\ 3.3-a)]{AmbrosioHonda17}). Since $L^2$-convergence is linear and stable under multiplication by functions in $\LIP_{bs}(Y)$ (see \cite[Prop.\ 3.3-c)]{AmbrosioHonda17}) we conclude that \eqref{eq:delta psi conv tensors} holds.
		
		Next observe that for all $n\in \nn\cup \{\infty\}$ we have
		\[
		\left|\nabla u_n\otimes \tfrac{\nabla u_n}{|\d u_n|+\delta} - \nchi_{\{|\d u_n|>0\}}\nabla u_n\otimes \tfrac{\nabla u_n}{|\d u_n|}\right |\le 
		\tfrac{\delta |\d u_n|}{|\d u_n|+\delta}\le \delta, \quad \text{$\mm_n$-a.e.}
		\]     
		Since $\sup_n \||v_n|\|_{L^2(\mm_n)}<\infty$ and $\supp(\phi)$ is bounded,  we conclude that \eqref{eq:delta psi conv tensors} holds also with $\delta=0.$ To remove $\phi$ take any $\eps>0$ and  choose $\phi$ so that $\|(1-\phi)|\d u_\infty|\|_{L^2(\mm_\infty)}<\eps.$ Since $|\d u_n|\to |\d u_\infty|$ in $L^2$ we have $(1-\phi)|\d u_n|\to (1-\phi)|\d u_\infty|$ in $L^2$ as well. Therefore by \eqref{eq:delta psi conv tensors} for $\delta=0$ we get
		\begin{align*}
			& \left  | \int \la v_n, \nabla u_n\otimes \frac{\nabla u_n}{|\d u_n|}\ra \d \mm_n-  \int \la v_\infty, \nabla u_\infty \otimes \frac{\nabla u_\infty}{|\d u_\infty|}\ra \d \mm_\infty\right |\\
			&\le \limsup_n  \int \left|(1-\phi) \la v_n, \nabla u_n\otimes \frac{\nabla u_n}{|\d u_n|}\ra \right| \d \mm_n +  \int \left|(1-\phi) \la v_\infty, \nabla u_\infty\otimes \frac{\nabla u_\infty}{|\d u_\infty|}\ra \right| \d \mm_\infty\\
			&\le \limsup_n \||v_n|\|_{L^2(\mm_n)} \|(1-\phi)|\d u_n|\|_{L^2(\mm_n)}+ \||v_\infty|\|_{L^2(\mm_\infty)} \|(1-\phi)|\d u_\infty|\|_{L^2(\mm_\infty)}\le C\eps,
		\end{align*}
		where $C$ does not depend on $\eps.$
		By the arbitrariness of $\eps$ we conclude.
		
	\end{proof}

	\begin{proof}[Proof of Theorem \ref{thm:lsc 2}]
		Thanks to Lemma \ref{lem:approx by test vectors} it is sufficient to show that for any test sequence of vector fields $v_n$ (as in Definition \ref{def:test sequence}), up to passing to a subsequence, we have 
		\begin{equation}
			\begin{split}
					\int \left(\div(v_n)-\nabla v_n(e_{u_n},e_{u_n})-\tfrac{|v_n|^2}{2}\right)& |\d u_n|  \d\mea_n \\
					&\to 	\int \left(\div(v_\infty)-\nabla v_\infty(e_{u_\infty},e_{u_\infty})-\tfrac{|v_\infty|^2}{2}\right) |\d u_\infty|  \d\mea_\infty,
			\end{split}
		\end{equation}
		where $e_{u_n}=\frac{\nabla u_n}{|\d u_n|}\nchi_{\{|\d u_n|>0\}}.$
		Since $|v_n|\to |v_\infty|$ in $L^2$ and $|v_n|\le C$ we have $|v_n|^2\to |v_\infty|^2$ in $L^2$. Moreover $|\d u_n|\to |\d u_\infty|$ in $L^2$. Hence $\int |v_n|^2|\d u_n|\d \mm_n\to \int |v_\infty|^2|\d u_\infty|\d \mm_\infty.$ By Lemma \ref{lem:prop of test sequence}, up to passing to a subsequence, we have $\div(v_n)\to \div(v_\infty)$ in $L^2$ and similarly we get     $\int \div(v_n)|\d u_n|\d \mm_n\to \int \div(v_\infty)|\d u_\infty|\d \mm_\infty.$ Finally, again by Lemma \ref{lem:prop of test sequence}, $\nabla v_n\rightharpoonup \nabla v_\infty$ in $L^2$. Hence applying Lemma \ref{lem:booh} we get precisely that $\int \nabla v_n(e_{u_n},e_{u_n})|\d u_n|\d \mm_n\to \int \nabla v_\infty(e_{u_\infty},e_{u_\infty}) |\d u_\infty|\d \mm_\infty.$
	\end{proof}

	\section{Regularity of Willmore energy on level sets of harmonic functions}\label{sec:reg along level sets}
	
	The main objective of this part is to prove the following regularity result for the Willmore energy.
	
	\begin{prop}[Regularity of the Willmore energy]\label{prop:wil lsc} Let $\Xdm$ be an $\RCD(K,N)$ space, $N\in(2,\infty).$
		Let $u$ be harmonic in $\Omega$  such that $u^{-1}(0,1)\subset\subset \Omega$. Then for all $\beta \in [\tfrac{N-2}{N-1},1]$ the  function
		\begin{equation}\label{eq:p wil on level sets harm}
			t\mapsto \int |\Hu (u)|^{1+\beta}\,{\rm Per}(\{u<t\},\Omega), \quad \text{a.e.\ $t\in(0,1)$}
		\end{equation}
		admits a lower semicontinuous representative.
	\end{prop}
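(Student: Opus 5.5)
We plan to realise the Willmore energy on each level set as a supremum of functions of $t$ that are continuous in $t$. A supremum of continuous functions is lower semicontinuous, and if it agrees with \eqref{eq:p wil on level sets harm} for a.e.\ $t$ it is the required representative.

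\textbf{Step 1 (explicit $\Hu$).} By Corollary \ref{cor:H formula for harmonic functions}, $\wil_{(0,1)}(u)<\infty$ and $\Hu(u)=H(u)e_u$ with $H(u)=-\la\nabla|\d u|,\nabla u\ra|\d u|^{-2}$. Hence $|\Hu(u)|\le |\nabla|\d u||/|\d u|$. By \eqref{eq:harmest2} and the coarea formula (Theorem \ref{thm:coarea}), the function \eqref{eq:p wil on level sets harm} is finite for a.e.\ $t$, at least for $\beta=1$.

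\textbf{Step 2 (dual formula).} Set $q=\frac{1+\beta}{\beta}$ and let $c_\beta>0$ be the constant from the Young inequality $ab\le c_\beta^{-1}\tfrac{a^{1+\beta}}{1+\beta}\dots$. We would establish that for a.e.\ $t\in(0,1)$
\[
\frac{1}{1+\beta}\int |\Hu(u)|^{1+\beta}\,\d{\rm Per}(\{u<t\},\Omega)=\sup_{v\in\mathcal V}\ G_v(t),\qquad G_v(t)\coloneqq\int \Big(\TD(v)-\tfrac{|v|^{q}}{q}\Big)\d{\rm Per}(\{u<t\},\Omega),
\]
where $\mathcal V\subset\testv(\X)$ is a fixed countable family. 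The inequality "$\ge$" follows from \eqref{eq:mean curv eq on level sets local} together with Young's inequality. The inequality "$\le$" needs density of $\mathcal V$ in $L^q$ with respect to the measures ${\rm Per}(\{u<t\},\Omega)$ for a.e.\ $t$. We would obtain this by the argument of Proposition \ref{prop:hilbert}, applied to the weighted measure $|\d u|\mm$ restricted to $u^{-1}(0,1)$ via coarea, using Lemma \ref{lem:approximation with bounded testv} and separability. Countability is what allows us to discard a single null set of $t$'s.

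\textbf{Step 3 (continuity of each $G_v$; main obstacle).} The crucial point is to show that each $G_v$ has a continuous representative on $(0,1)$.

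\begin{itemize}
\item For the term $\int |v|^q\,\d{\rm Per}_t$, we would use a Gauss--Green identity with the vector field $|v|^q e_u$. Since $\Delta u=0$, $\frac{\d}{\d t}\int_{\{u<t\}}f|\d u|\,\d\mm=\int f\,\d{\rm Per}_t$. Writing $\int_{\{u=t\}}\la X,e_u\ra$ as $\int_{\{u<t\}}\div(X)$ up to boundary terms at $\{u=0\}$, the $t$-dependence becomes a volume integral over $\{s<u<t\}$ of an $L^1_\loc(\Omega)$ function. This is absolutely continuous in $t$. The integrability comes from $|\d u|^{\beta/2}\in\W_\loc$ and $|\d u|^\beta\in\dom(\bd,\Omega)$ (Theorem \ref{cor:harmest2}); this is exactly where the restriction $\beta\ge\frac{N-2}{N-1}$ enters.
\item For the $\TD(v)$ term, we would apply Lemma \ref{lem:int by parts} with cut-offs $\phi\to\nchi_{(s,t)}$. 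This gives $\int\TD(v)\,\d{\rm Per}_t=\int\la v,e_u\ra H(u)\,\d{\rm Per}_t$, and then $H(u)|\d u|\,e_u=-\nabla|\d u|$ (normal part). We would then use the distributional Laplacian of $|\d u|^\beta$ (a measure) to express $t\mapsto\int \la v, \nabla|\d u|^{\beta}\ra|\d u|^{1-\beta}\ldots\d{\rm Per}_t$ as a flux through $\{u=t\}$. Its jumps are controlled by the singular part of $\bd|\d u|^\beta$ on the level set.
\end{itemize}

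If only one-sided control is obtained in this way, namely that the measure part has a sign (Bochner gives $\bd|\d u|^\beta\ge -K^-\ldots$), then $G_v$ would be the sum of a continuous function and a monotone/semicontinuous one. Lower semicontinuity of each $G_v$, rather than full continuity, still suffices for the supremum. I expect this sign/jump analysis of $\bd|\d u|^\beta$ restricted to level sets to be the delicate step.

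\textbf{Step 4 (conclusion).} Defining $\overline W(t)\coloneqq(1+\beta)\sup_{v\in\mathcal V}\tilde G_v(t)$, where $\tilde G_v$ is the (lower semi)continuous representative of $G_v$ from Step 3, gives a lower semicontinuous function. By Step 2 it coincides a.e.\ with \eqref{eq:p wil on level sets harm}. Maximality of this representative can be arranged afterwards by taking the lower semicontinuous envelope of the essential liminf.
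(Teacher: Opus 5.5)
Steps 1, 2 and 4 are fine in outline. In particular, the a.e.\ duality of Step 2 can be obtained as follows: approximate $|\Hu(u)|^{p-2}\Hu(u)$, $p=1+\beta$, in $L^q(|\d u|\mm)$, integrate the nonnegative Young defect via coarea, and extract an a.e.-convergent subsequence. Step 3, however, carries the whole argument and has a genuine gap; the integrability you cite does not close it.

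For a general $v\in\testv(\X)$, neither of the functions
\[
t\mapsto\int|v|^q\,\d\pert, \qquad t\mapsto\int\TD(v)\,\d\pert=\int H(u)\la v,e_u\ra\,\d\pert
\]
carries a positive power of $|\d u|$. Test against $\phi'(t)$ and integrate by parts via coarea. For the first function this leads to $\int\phi(u)\,\div(|v|^qe_u)\,\d\mm$, where $\div(|v|^qe_u)=\la\nabla|v|^q,e_u\ra+|v|^qH(u)$. So you need $H(u)\in L^1_\loc(\mm)$, i.e.\ integrability of $|\nabla|\d u||/|\d u|$ with respect to $\mm$ rather than $|\d u|\mm$. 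For the second, the natural flux field is $\la v,\nabla u\ra\nabla|\d u|/|\d u|^2$. Its divergence contains terms like $|v|\,|\nabla|\d u||^2/|\d u|^2$ and $\la v,\nabla u\ra\,\bd|\d u|/|\d u|^2$. Theorem \ref{cor:harmest2} only gives, for $\gamma\ge\frac{N-2}{N-1}$, that $|\nabla|\d u||^2|\d u|^{\gamma-2}\in L^1_\loc$ and that $\bd|\d u|^{\gamma}$ is a Radon measure. These say nothing about such negative powers of $|\d u|$ near the critical set. Hence neither continuity nor $\BV$ regularity of $G_v$ is established, and Step 4 has nothing to take a supremum of.

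The missing idea is that $\Hu(u)=H(u)e_u$ is normal, so the duality only needs normal test fields, and these should carry a built-in factor $|\d u|$. Concretely, take $v=-g_n\nabla u$ with $g_n\coloneqq h_n/(1/n+|\d u|)$. Here $h_n\in\LIP_c(\X)$ and $h_n\to-|H(u)|^{p-2}H(u)$ in $L^q(|\d u|\mm)$ and a.e., with an $L^q(|\d u|\mm)$ dominating function. Then $g_n\in\W_\loc\cap L^\infty_\loc(\Omega)$, and the two terms behave well:
\begin{itemize}
\item $t\mapsto\int|\d u|^q|g_n|^q\,\d\pert$ is the flux of $|g_n|^q|\d u|^{q-1}\nabla u$. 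Its divergence is in $L^1$ because $|\d u|^{q-1}\in\W_\loc(\Omega)$, so this function is $W^{1,1}_\loc$ and hence continuous (Lemma \ref{lem:w12}).
\item $t\mapsto\int H(u)|\d u|g_n\,\d\pert=-\int g_n\la\nabla|\d u|,e_u\ra\,\d\pert$ is minus the flux of $g_n\nabla|\d u|$. Its divergence is the Radon measure $g_n\bd|\d u|+\la\nabla g_n,\nabla|\d u|\ra\mm$, so this function is $\BV_\loc$ (Lemma \ref{lem:bv}).
\end{itemize}
For the second function, choose the upper semicontinuous representative, i.e.\ the larger one-sided limit at each of its countably many jumps. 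Then each
\[
t\mapsto -p\int H(u)|\d u|g_n\,\d\pert-\tfrac pq\int|\d u|^q|g_n|^q\,\d\pert
\]
is lower semicontinuous without any sign information on the jumps, so the Bochner-type sign analysis you anticipate is unnecessary. The supremum over $n$ is then the desired representative: Young's inequality gives ``$\le$'' a.e., and $L^q(|\d u|\mm)$-convergence plus coarea and a subsequence give equality a.e.
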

	Note that the function \eqref{eq:p wil on level sets harm} is well defined up to taking a.e.\ equivalence classes.

Throughout this section $\Xdm$ is an $\RCD(K,N)$ space, $N\in(2,\infty).$
    
	For the proof we need two preliminary lemmas.
	\begin{lemma}[Continuity of weighted gradient]\label{lem:w12}
		Let $u$ be harmonic in $\Omega$ and satisfying \eqref{eq:compact level sets}. For all $g \in \W_\loc\cap L^\infty_\loc(\Omega)$  and all $\alpha \ge 1+\frac{N-2}{N-1}$ the function
		\[
		(0,1)\ni t \mapsto \int |\d u|^\alpha g \,\d {\rm Per}(\{u<t\},\Omega),
		\]
		belongs to $W^{1,1}_\loc(0,1)$ and thus has a continuous representative.
	\end{lemma}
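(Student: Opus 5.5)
The plan is to compute the distributional derivative in $t$ of $F(t)\coloneqq \int |\d u|^\alpha g\,\d{\rm Per}(\{u<t\},\Omega)$ through the coarea formula (Theorem \ref{thm:coarea}, in its local form on $\Omega$), and to show that this derivative lies in $L^1_\loc(0,1)$. Write $\alpha=1+\beta$ with $\beta\ge \frac{N-2}{N-1}$. For $\phi\in C^\infty_c(0,1)$ set $\Phi(t)\coloneqq\int_0^t\phi$, which is bounded. Then, using coarea with the weight $|\d u|^{\beta}g$ and the fact that $\Phi(u)$ is constant outside the compact set $u^{-1}(\supp\phi)\subset\subset\Omega$ granted by \eqref{eq:compact level sets}, we get
\[
-\int_0^1\phi'(t)F(t)\,\d t=-\int_\Omega \phi'(u)|\d u|^{1+\beta}g\,\d\mm .
\]
The aim is to rewrite the right-hand side as $\int_\Omega\phi(u)\,G\,|\d u|\,\d\mm$ with $G\in L^1(u^{-1}(a,b))$ for every $[a,b]\subset(0,1)$. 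Coarea then gives $F'(t)=\int G\,\d{\rm Per}(\{u<t\},\Omega)$ in the distributional sense, and this lies in $L^1_\loc(0,1)$.

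To obtain $G$, observe that $\phi'(u)|\d u|^2=\la\nabla\phi(u),\nabla u\ra$, so the integrand equals $\la \nabla \phi(u),\nabla u\ra |\d u|^{\beta-1}g$. Integrating by parts against the harmonic function $u$, with test function $\phi(u)|\d u|^{\beta-1}g$ compactly supported in $\Omega$, would give
\[
\int \phi'(u)|\d u|^{1+\beta}g\,\d\mm=-\int\phi(u)\big\la\nabla\big(|\d u|^{\beta-1}g\big),\nabla u\big\ra\,\d\mm ,
\]
since $\Delta u=0$. Expanding the gradient, $\nabla(|\d u|^{\beta-1}g)=(\beta-1)|\d u|^{\beta-2}g\nabla|\d u|+|\d u|^{\beta-1}\nabla g$. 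This leads to
\[
G=(\beta-1)|\d u|^{\beta-2}g\,\la\nabla|\d u|,e_u\ra+|\d u|^{\beta-1}\la\nabla g,e_u\ra .
\]
We then need $G|\d u|\in L^1_\loc$. The second term is bounded by $|\d u|^{\beta}|\nabla g|$. This is in $L^1_\loc$ because $\beta\le$ some fixed power and $u$ is locally Lipschitz; for $\beta\ge0$ we simply have $|\d u|^\beta\in L^\infty_\loc$ and $\nabla g\in L^2_\loc$. The first term is bounded by $|\beta-1|\,\|g\|_\infty|\d u|^{\beta-1}|\nabla|\d u||$, and we write $|\d u|^{\beta-1}|\nabla|\d u||=c_\beta\,|\d u|^{\beta/2}\,|\nabla |\d u|^{\beta/2}|$. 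Theorem \ref{cor:harmest2} gives $|\d u|^{\beta/2}\in\W_\loc(\Omega)$, and $|\d u|^{\beta/2}\in L^\infty_\loc$, so the product is in $L^1_\loc$ (with the case $\beta=1$ being trivial). Local finiteness on $u^{-1}([a,b])\subset\subset\Omega$ then suffices, after covering by finitely many balls or applying a cutoff.

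The main obstacle is justifying the integration by parts, because $|\d u|^{\beta-1}$ is singular where $|\d u|=0$ when $\beta<1$, so the test function need not be in $\W$. I would regularize by replacing $|\d u|^{\beta-1}$ with $(|\d u|^2+\eps)^{(\beta-1)/2}$, which is in $\W_\loc\cap L^\infty_\loc$ since $|\d u|\in\W_\loc$ by Theorem \ref{cor:harmest2} with $\beta=1$. The integration by parts is then legitimate, and we pass to the limit $\eps\to0$ by dominated convergence. On the left, the integrand is dominated by $|\d u|^{1+\beta}|g|$. On the right, the terms are dominated by the same $L^1_\loc$ bounds as above, uniformly in $\eps$: the derivative term is $(\beta-1)(|\d u|^2+\eps)^{(\beta-3)/2}|\d u|\nabla|\d u|$, whose norm times $|\d u|$ is at most $|\d u|^{\beta-1}|\nabla|\d u||$. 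On $\{|\d u|=0\}$ we have $\nabla|\d u|=0$ a.e., so the limit integrands vanish there, consistent with the convention. Having shown $F\in W^{1,1}_\loc(0,1)$, the continuous representative follows from the one-dimensional theory. Note that $F\in L^1_\loc$ itself holds by coarea, since $|\d u|^{\beta}g\in L^\infty_\loc$.
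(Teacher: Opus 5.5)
Your strategy (coarea, the identity $\phi'(u)|\d u|^2=\la\nabla\phi(u),\nabla u\ra$, integration by parts against the harmonic $u$, and an $L^1$ bound from Theorem \ref{cor:harmest2}) is the paper's. However, your first display has an exponent error that carries through the rest of the argument. With $F(t)=\int|\d u|^{1+\beta}g\,\d\Per(\{u<t\},\Omega)$, the coarea formula (Theorem \ref{thm:coarea} with $f=|\d u|^{1+\beta}g$) gives
\[
-\int_0^1\phi'(t)F(t)\,\d t=-\int\phi'(u)\,|\d u|^{2+\beta}g\,\d\mm ,
\]
not $|\d u|^{1+\beta}$. You used the weight $|\d u|^{\beta}g$, which is the coarea identity for $t\mapsto\int|\d u|^{\beta}g\,\d\Per(\{u<t\},\Omega)$. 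So, as written, you compute the distributional derivative of a different function, with exponent $\alpha-1$ in place of $\alpha$. Your closing remark that $F\in L^1_\loc$ ``since $|\d u|^{\beta}g\in L^\infty_\loc$'' carries the same mislabeling.

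The slip is easy to repair. The steps after the first display work for any exponent $\ge\frac{N-2}{N-1}$ in place of $\beta$, since they rely on $|\d u|^{\beta/2}\in\W_\loc(\Omega)$. So running them with $\alpha$ instead of $\beta$ proves the lemma. In fact your regularized argument gives the conclusion for every exponent $\ge\frac{N-2}{N-1}$, a wider range than stated.

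But once the exponent is right, the test function is $\phi(u)|\d u|^{\alpha-1}g$ with $\alpha-1\ge\frac{N-2}{N-1}>0$. By Theorem \ref{cor:harmest2} this is already in $\W$ with compact support in $\Omega$. The singularity of $|\d u|^{\beta-1}$ that you call the main obstacle, and the $\eps$-regularization, exist only because of the slip.

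The corrected argument is exactly the paper's proof:
\begin{itemize}
\item it extends $u$ and $g$ to $\X$, so the global coarea formula applies;
\item it writes the right-hand side as $\int\phi(u)\,\div(g|\d u|^{\alpha-1}\nabla u)\,\d\mm$;
\item it expands this with the Leibniz rule, using $\Delta u=0$ on the support of $\phi(u)$, and observes the result has the form $h|\d u|$ with $h\in L^1(|\d u|\mm)$;
\item it concludes by coarea.
\end{itemize}
The stated threshold $\alpha\ge1+\frac{N-2}{N-1}$ guarantees $|\d u|^{\alpha-1}\in\W_\loc(\Omega)$, which is why the paper needs no regularization.
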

	\begin{proof}
		Fix $\phi \in C^1_c(0,1)$ with $0\le \phi \le 1$. We can extend $u$ to a function in $\dom(\Delta)\cap \LIP(\X)$ agreeing with $u$ in $\supp(\phi(u))$ (see e.g.\ the proof of Theorem \ref{thm:domain dense}). Similarly we extend $g$ to a function in $\W\cap L^\infty(\X)$ with $\supp(g)$ compact in $\Omega$ and agreeing with $g$ in $\supp(\phi(u))$. Then $g|\d u|^{\alpha}\in L^1(|\d u|\mm)$. Hence by the coarea formula in Theorem \ref{thm:coarea} and integrating by parts
		\begin{align*}
			&-\int \phi'(t) \int |\d u|^\alpha g \,\d{\rm Per}(\{u<t\},\Omega)\d t= - \int \phi'(u) |\d u|^{\alpha+1} g \d \mm\\
			&=-\int \phi'(u)\la \nabla u,\nabla u \ra|\d u|^{\alpha-1} g\d \mm= -\int \la \nabla \phi(u),\nabla u \ra |\d u|^{\alpha-1} g \d \mm\\
			&= \int \phi(u) \div( g |\d u|^{\alpha-1}\nabla u)\,\d \mm.
		\end{align*}
		Since $u$ is harmonic, by Theorem \ref{cor:harmest2} we have that $|\d u|^{\alpha-1}\in \W_\loc(\Omega)$ and so $g|\d u|^{\alpha-1}\in \W(\X).$
		By the Leibniz  rule for the divergence we have
		\[
		\div( g |\d u|^{\alpha-1}\nabla u)=g|\d u|^{\alpha-1}\Delta u + \la \nabla (g|\d u|^{\alpha-1}),\nabla u\ra \in L^1(\mm).
		\]
		Hence  $\div( g |\d u|^{\alpha-1}\nabla u)=h |\d u|$ for some $h\in L^1(|\d u|\mm).$ Hence,  again by the coarea formula we have both 
		\[
		-\int \phi'(t) \int |\d u|^\alpha g \,\d {\rm Per}(\{u<t\},\Omega)\d t= \int \phi(t) \int h \d {\rm Per}(\{u<t\},\Omega) \d t
		\]
		and $ \int h \d \pert \d t\in L^1(\rr)$. Since both sides of the  above identity do not depend on the extension of $u$ and $g$, we conclude.
	\end{proof}

	\begin{lemma}[BV regularity of mean curvature]\label{lem:bv}
		Let $u$ be harmonic in $\Omega$ and satisfying \eqref{eq:compact level sets}. Then for all $g \in \W_\loc\cap L^\infty_\loc(\Omega)$ and all $\beta \ge \frac{N-2}{N-1}$ the function
		\[
		f(t)\coloneqq   \int H(u) |\d u|^{\beta} g \d {\rm Per}(\{u<t\},\Omega),
		\]
		belongs to $\BV_\loc(0,1)$ and its jump points are contained in the countable set
		\begin{equation}\label{eq:jump points}
			\cal S_{u,\beta}\coloneqq \{t \in \rr \ : \ |\bd |\d u|^\beta|(\{u=t\})>0\}.
		\end{equation}
	\end{lemma}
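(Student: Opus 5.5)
Since $u$ is harmonic, Corollary \ref{cor:H formula for harmonic functions} gives $H(u)=-\frac{\la \nabla |\d u|,\nabla u\ra}{|\d u|^2}$ on $\{|\d u|>0\}$. Hence
\[
H(u)|\d u|^\beta = -\frac{\la \nabla |\d u|,\nabla u\ra}{|\d u|^{2-\beta}} = -\frac1\beta\frac{\la \nabla |\d u|^\beta,\nabla u\ra}{|\d u|^{2}}.
\]
This suggests computing the distributional derivative of $f$ by testing against $\phi'$, as in Lemma \ref{lem:w12}. I would first localize: fix $\phi\in C^1_c(0,1)$ and extend $u$ and $g$ off $\supp\phi(u)$ as in the proof of Lemma \ref{lem:w12}. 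By the coarea formula (Theorem \ref{thm:coarea}),
\[
-\int\phi'(t)f(t)\,\d t=-\int \phi'(u)H(u)|\d u|^{\beta+1}g\,\d\mm=\frac1\beta\int \phi'(u)\la\nabla|\d u|^\beta,\nabla u\ra\frac{g}{|\d u|}\,\d\mm .
\]
The factor $g/|\d u|$ is the problem. Instead, I would write $\phi'(u)\nabla u=\nabla\phi(u)$ and regularize by $|\d u|+\eps$. This gives $\frac1\beta\int \la\nabla|\d u|^\beta,\nabla\phi(u)\ra\frac{g}{|\d u|+\eps}\,\d\mm$, and the $\eps\to0$ limit is justified by dominated convergence using \eqref{eq:harmest2}.

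Next I would integrate by parts using the measure-valued Laplacian $\bd|\d u|^\beta$ from Theorem \ref{cor:harmest2}. Setting $\psi_\eps\coloneqq g/(|\d u|+\eps)$, which is bounded and Sobolev on the support, we get
\[
\int \la\nabla|\d u|^\beta,\nabla(\phi(u)\psi_\eps)\ra = -\int \phi(u)\psi_\eps\,\d\bd|\d u|^\beta .
\]
Subtracting the term $\int\phi(u)\la\nabla|\d u|^\beta,\nabla\psi_\eps\ra$ leaves an expression of the form $\int\phi(u)\,\d\mu_\eps$ for a signed Radon measure $\mu_\eps$. The aim is to show that, as $\eps\to0$, this becomes $\int\phi(u)\,\d\mu$ with $\mu$ locally finite on $u^{-1}(0,1)$. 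Then the derivative of $f$ is the pushforward $u_\#\mu$, a locally finite measure, so $f\in\BV_\loc(0,1)$. Its atoms lie at $t$ with $|\mu|(\{u=t\})>0$. The absolutely continuous parts of $\mu$ are $L^1$ functions times $\mm$, and by coarea they give no mass to the level sets carrying positive $|\d u|\mm$ mass, up to null sets. The remaining singular part is $\psi\,\bd|\d u|^\beta$, so the jumps lie in $\mathcal S_{u,\beta}$. That set is countable because $|\bd|\d u|^\beta|$ is locally finite and the level sets are disjoint.

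The main obstacle is the singular weight $1/|\d u|$ near the critical set, both in $\psi_\eps\,\d\bd|\d u|^\beta$ and in $\la\nabla|\d u|^\beta,\nabla\psi_\eps\ra$, which contains $|\nabla|\d u||^2|\d u|^{\beta-1}/(|\d u|+\eps)^2$. To handle it I would avoid dividing by $|\d u|$: write $g/|\d u|=g|\d u|^{\gamma-1}\cdot|\d u|^{-\gamma}$ and redistribute powers, so that only $|\d u|^{\beta'}$ with $\beta'\ge\frac{N-2}{N-1}$ and \eqref{eq:harmest2} appear. Concretely, I would first try to show that the $\eps$-terms have a sign or are dominated, using Bochner-type estimates in Theorem \ref{cor:harmest2}: the positive part of $\bd|\d u|^\beta$ dominates $|\nabla|\d u||^2|\d u|^{\beta-2}$ via the refined Kato inequality, which is exactly why $\beta\ge\frac{N-2}{N-1}$ is needed. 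This would yield uniform-in-$\eps$ total variation bounds, so a BV bound on $f$ follows from lower semicontinuity of total variation even if the limit measure is not identified exactly. The jump set claim would then follow from the explicit form of the limit measure, whose only part that can charge a level set is the one generated by $\bd|\d u|^\beta$.
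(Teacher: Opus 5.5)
Your skeleton is the paper's: test against $\phi'$, apply coarea, use $\phi'(u)\nabla u=\nabla\phi(u)$, integrate by parts against $\bd|\d u|^\beta$, and read the atoms of $Df$ off the level sets charged by $|\bd|\d u|^\beta|$. However, the identity everything else rests on is off by a factor of $|\d u|$. Since $\nabla|\d u|^\beta=\beta|\d u|^{\beta-1}\nabla|\d u|$, one has $H(u)|\d u|^\beta=-\frac1\beta\la\nabla|\d u|^\beta,\nabla u\ra/|\d u|$, not $/|\d u|^2$. So the weight $|\d u|$ produced by the coarea formula cancels the denominator exactly: $-\beta H(u)|\d u|^{\beta+1}=\la\nabla|\d u|^\beta,\nabla u\ra$. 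For $\supp\phi\subset K\subset\subset(0,1)$ this gives
\[
-\beta\int\phi'(t)f(t)\,\d t=\int\la\nabla\phi(u),\nabla|\d u|^\beta\ra g\,\d\mm=-\int\phi(u)g\,\d\bd|\d u|^\beta-\int\phi(u)\la\nabla g,\nabla|\d u|^\beta\ra\,\d\mm .
\]
Its modulus is at most $\|\phi\|_\infty\big(\|g\|_{L^\infty(u^{-1}(K))}|\bd|\d u|^\beta|(u^{-1}(K))+\||\nabla g|\|_{L^2(u^{-1}(K))}\||\nabla|\d u|^\beta|\|_{L^2(u^{-1}(K))}\big)$. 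That is the whole BV estimate, and no $\eps$-regularization is needed. For the jumps, test with $0\le\phi_n\le1$, $\phi_n(\bar t)=1$, $\supp\phi_n\subset(\bar t-\frac1n,\bar t+\frac1n)$. The $\mm$-absolutely continuous term tends to $0$, because $|\d u|$, and hence $|\nabla|\d u|^\beta|$, vanishes $\mm$-a.e.\ on $\{u=\bar t\}$ by locality. This leaves $|Df|(\{\bar t\})\le\beta^{-1}\|g\|_\infty|\bd|\d u|^\beta|(\{u=\bar t\})$, i.e.\ the paper's argument.

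As written, though, your proposal has a genuine gap. Your right-hand side $\frac1\beta\int\phi'(u)\la\nabla|\d u|^\beta,\nabla u\ra\frac{g}{|\d u|}\,\d\mm$ actually equals $-\int\phi'(t)\int H(u)|\d u|^{\beta-1}g\,\d{\rm Per}(\{u<t\},\Omega)\,\d t$. So you would be proving BV regularity of a different function, and the uniform-in-$\eps$ bounds you hope for are never supplied. They are also out of reach of the tools you cite. In the term $\int\phi(u)\psi_\eps\,\d\bd|\d u|^\beta$ the naive bound on $\|\psi_\eps\|_\infty$ is $\eps^{-1}$. By your own accounting, $\la\nabla|\d u|^\beta,\nabla\psi_\eps\ra$ contains $g|\d u|^{\beta-1}|\nabla|\d u||^2/(|\d u|+\eps)^2\to g|\d u|^{\beta-3}|\nabla|\d u||^2$. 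Yet \eqref{eq:harmest2} only controls $|\d u|^{-1}|\nabla|\d u||^2$, and the refined Kato/Bochner lower bound on $\bd|\d u|^\beta$ only controls $|\d u|^{\beta-2}|\nabla|\d u||^2$, with a constant that vanishes at $\beta=\frac{N-2}{N-1}$. The obstacle you identify is an artifact of the slip, and your remedy would not remove it.

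A smaller point on the jump set: an $L^1(\mm)$ density can charge $\{u=t\}$ when $\mm(\{u=t\})>0$, so absolute continuity of the non-$\bd$ part is not enough. What you need is that its density vanishes $\mm$-a.e.\ on $\{|\d u|=0\}$, as used above.
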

	\begin{proof}
		Set $P_t \coloneqq {\rm Per}(\{u<t\},\Omega)$ as measure.
		Fix $\phi \in C^1_c(\rr)$ with $\supp(\phi)\subset K$ with $K\subset (0,1)$ compact. By the assumptions $\phi(u)\in \LIP_c(\Omega).$ Recall that $|\d u|^\beta \in \dom(\bd,\Omega)$ for all $\beta\ge \frac{N-2}{N-1}$ (see Theorem \ref{cor:harmest2}). By the coarea formula in Theorem \ref{thm:coarea} (applied after an extension of $u$ outside $K$ to a function in $\LIP_c(\X)$) and integrating by parts
		\begin{align*}
			&-\beta\int_0^1 \phi'(t) \int H(u) |\d u|^{\beta}g \d P_t \d t= -\beta \int \phi'(u) H(u) |\d u|^{\beta+1} g \d \mm\\
			&= \beta\int \phi'(u)\la \nabla u,\nabla |\d u|\ra |\d u|^{\beta-1} g\d \mm= \int \la \nabla \phi(u),\nabla |\d u|^\beta\ra g \d \mm\\
			&= -\int \phi(u)g \bd |\d u|^\beta\d \mm- \int \phi(u) \la \nabla g, \nabla |\d u|^\beta\ra \d \mm.
		\end{align*}
		Therefore
		\begin{align*}
			\left|-\int_0^1 \beta\phi'(t) \int H(u) |\d u|^{\beta}g \d P_t \d t\right|&\le \|\phi\|_\infty \left(\|g\|_\infty  \,|\bd |\d u|^\beta| (u^{-1}(K)) + \|g\|_{\W(\X)}\||\nabla |\d u|^\beta|\|_{L^2(u^{-1}(K))} \right)\\
			&=C_{\Omega,g,K,u}\,\|\phi\|_\infty.
		\end{align*}
		This shows $ f\in \BV_\loc(0,1)$. Let now $Df$ be its distributional derivative. Let $\bar t \in (0,1)$ be a jump point, i.e.\ such that $|Df(\{\bar t\})|>0$. Let $\phi_n\in C^1_c(0,1)$ be such that $\phi_n(\bar t)=1$, $0\le \phi_n\le 1$ and $\supp (\phi_n)\subset (\bar t-1/n,\bar t+ 1/n).$ Then
		\begin{align*}
			&0<|D f(\{\bar t\})|=\lim_n \left |\int_0^1 \phi'_n f \d t \right |\\
			&\le \beta^{-1}\liminf_n \left |\int \phi_n(u)g \bd |\d u|^\beta\d \mm\right| + \beta^{-1}\int_{\{\bar t-1/n\le u\le \bar t+ 1/n\}} |\la \nabla g, \nabla |\d u|^\beta \ra| \d \mm =\\
			&\le \beta^{-1}\liminf_n \left |\int \phi_n(u)g \bd |\d u|^\beta \d \mm\right|\le \beta^{-1}\|g\|_\infty| \bd (|\d u|^\beta)|(\{u= \bar t\}).
		\end{align*}
		This completes the proof of the second part.
	\end{proof}
	For all $g \in  \W_\loc\cap L^\infty_\loc(\Omega)$ and all $q\ge 2$ define the functions $R_{g^q},R_{Hg}(t): (0,1)\to \rr$ as the continuous (resp.\ minimal upper semicontinuous) a.e.\ representative of
	\[
	R_{g^q}(t)\coloneqq \int |\d u|^q|g|^q\,\d{\rm Per}(\{u<t\},\Omega)\quad  \bigg(\text{ resp.\  $\quad R_{Hg}(t)\coloneqq \int H(u)|\d u|g\,\d{\rm Per}(\{u<t\},\Omega)$}\bigg), 
	\]
	which exist by Lemma \ref{lem:w12} and Lemma \ref{lem:bv}.

	\begin{proof}[Proof of Proposition \ref{prop:wil lsc}]
		Set $p\coloneqq 1+\beta \in (1,2]$  and $q\coloneqq \frac{p}{p-1}\ge 2.$ Let $H(u)\in L^2(\Omega;|\d u|\mm)$ be the function such that $\Hu(u)=H(u)e_u$ as in Proposition \ref{prop:main H local}.
		Consider  $h_n \in \LIP_c(\X)$ such that $h_n \to -|H(u)|^{p-2}H(u)$ both in $L^q(|\d u|\mm)$ and $(|\d u|\mm)$-a.e.\ and such that $|h_n|\le G \in L^q(|\d u|\mm)$ (where $|\d u|$ and $H(u)$ are extended to zero outside $\Omega$).  Set $g_n\coloneqq (1/n+|\d u|)^{-1}h_n\in \W_\loc\cap L^\infty_\loc(\Omega)$.
		Define the lower semicontinuous function $\widetilde W_p(t):(0,1)\to \rr$ as
		\[
		\widetilde W_p\coloneq \sup_{n}   \left(  -pR_{Hg_n}-\frac pq R_{g_n^q}\right).
		\]
		By Young's inequality for all $n$ it holds
		\begin{equation}\label{eq:young's trick}
			-pR_{Hg_n}-\frac pq R_{g_n^q}\le  \int |H(u)|^p\,\d{\rm Per}(\{u<t\},\Omega), \quad \text{for a.e.\ $t\in(0,1).$}
		\end{equation}
		In particular
		\begin{equation}\label{eq:Rhp}
			\widetilde W_p\le  \int |H(u)|^p\,\d{\rm Per}(\{u<t\},\Omega), \quad  \text{for a.e.\ $t\in(0,1).$}
		\end{equation}
		By the dominated convergence theorem we can check that
		\[
		\int_{u^{-1}(a,b)} \left(\frac1p|\Hu(u)|^p+H(u)|\d u|g_n+\frac1q|\d u|^q|g_n|^q \right)|\d u|\mm \to 0,
		\]
		for all $(a,b)\subset \subset  (0,1)$. Note also that the functions $|\d u|^{q}|g_n|^q, |\Hu(u)|^p$ and $H(u)g_n$, all belong to $L^1_\loc(\Omega;|\d u|\mm)$. Hence (after an extension of $u$ to $\X$) we can apply the coarea formula in Theorem \ref{thm:coarea}  to rewrite the above   as
		\[
		\int_a^b  \left| \int |\Hu(u)|^p\,\d\pert+pR_{Hg_n}(t)+\frac pqR_{g_n^q}(t)\right|\,\d t \to 0,
		\]
		where the modulus comes from \eqref{eq:young's trick}.
		Hence, up to passing to a subsequence, the integrand converges pointwise to zero in $(a,b)\setminus N$, for some $N$ null set. Therefore, up to enlarging $N,$ thanks to \eqref{eq:Rhp}
		\begin{align*}
			\widetilde W_p(t)&=  \sup_{n}   \left(  -pR_{Hg_n}(t)-\frac pq R_{g_n^q}(t)\right) =\lim_{n}   \left(  -pR_{Hg_n}(t)-\frac pq R_{g_n^q}(t)\right)\\
			&=\int |\Hu(u)|^p\,\d{\rm Per}(\{u<t\},\Omega), \quad \forall t \in (a,b)\setminus N.
		\end{align*}
		This shows that $\int |\Hu(u)|^p\, \d{\rm Per}(\{u<t\},\Omega)$ coincides a.e.\ in $(a,b)$ with $\widetilde W_p$.
	\end{proof}
	From now on we denote by $W_p$ the (pointwise) maximal lower semicontinuous representative\footnote{
		Recall that for any $f:(0,1)\to \rr$, the l.s.c.\ envelope $f_*(x)\coloneqq \sup_{\delta>0} \essinf_{B_\delta(x)} f$ is l.s.c.,  $f_*\le f$ a.e.\ and it is the  (pointwise) maximal with these properties.  In particular if $f$ admits a l.s.c.\ a.e.\ representative $\tilde f$, then $\tilde f\le f_*$, $f_*=f$ a.e.\ and so $f_*$ is the (pointwise) maximal with this property.
	} of $\int |\Hu(u)|^p\d\pert$ in $(0,1).$
	
	\section{Isocapacitary inequality and Willmore inequality}\label{sec:isocap will}
	
		\subsection{The isocapacitary inequality via rearrangement}
	The goal of this section is to prove the following isocapacitary inequality.
	\begin{theorem}[Sharp isocapacitary inequality]\label{thm:isocap}
		Let $\Xdm$ be a $\cd(0,N)$ space with $N\in (2,\infty)$ and $\avr(\X)>0$. Then for every bounded Borel set $E \subset \X$   it holds
		\begin{equation}\label{eq:isocap}
			\Cap(E)\ge(N-2)N \omega_N^{\frac2N} \avr(\X)^{\frac2N} \mea(E)^\frac{{N-2}}{N}.
		\end{equation}
        Moreover if equality holds for some $E$ with $\mm(E)>0$ and $\Xdm$ is an $\RCD(0,N)$ space, then  $E$ coincides $\mm$-a.e.\ with a ball centred at $x_0$ and  $\X$ is an $N$-Euclidean cone with tip $x_0$.
	\end{theorem}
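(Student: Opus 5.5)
We will prove \eqref{eq:isocap} by symmetric decreasing rearrangement onto a one-dimensional model, with the sharp isoperimetric inequality for $\cd(0,N)$ spaces with Euclidean volume growth as the key input. That inequality reads
\[
\Per(F)\ge N\omega_N^{\frac1N}\avr(\X)^{\frac1N}\mm(F)^{\frac{N-1}{N}}.
\]
By Lemma \ref{lem:cap bs} it suffices to consider competitors $u\in\W(\X)$ with bounded support, $0\le u\le 1$, and $u\ge1$ $\mm$-a.e.\ in a neighbourhood of $E$. Truncating at $1$ lowers the energy, so we may assume $u\le 1$. For such $u$ we set $\mu(t)\coloneqq\mm(\{u>t\})$, which is non-increasing, satisfies $\mu(t)\ge\mm(E)$ for $t<1$, and is finite. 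We compare $u$ with the radial function $u^*(r)$ on the model cone of density $N\omega_N\avr(\X)r^{N-1}$, defined so that its superlevel sets have the same volumes $\mu(t)$. The target is the P\'olya--Szeg\H{o} inequality $\int|\d u|^2\d\mm\ge\int|\d u^*|^2$, after which the one-dimensional capacity of a ball in the model gives the claim.

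The P\'olya--Szeg\H{o} step goes as follows. By the coarea formula (Theorem \ref{thm:coarea}) with $f=|\d u|$, and using $-\mu'(t)\ge\int_{\{u=t\}}|\d u|^{-1}\d\Per$ in the distributional sense, Cauchy--Schwarz on level sets gives, for a.e.\ $t$,
\[
\Per(\{u>t\})^2\le\Big(\int|\d u|\,\d\Per(\{u>t\})\Big)(-\mu'(t)).
\]
Integrating the first factor in $t$ and applying the isoperimetric inequality yields
\[
\int|\d u|^2\d\mm\ge\int_0^1\frac{c_N^2\,\mu(t)^{\frac{2(N-1)}{N}}}{-\mu'(t)}\,\d t,\qquad c_N\coloneqq N\omega_N^{\frac1N}\avr(\X)^{\frac1N}.
\]
To avoid differentiating $\mu$, we instead change variables to $r(t)$ with $\omega_N\avr(\X)r^N=\mu(t)$. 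Minimizing the right-hand side over non-increasing $\mu$ with $\mu(1^-)\ge\mm(E)$ and $\mu(0)<\infty$ is then a one-dimensional variational problem. Its minimum equals the capacity of the ball of radius $R=(\mm(E)/(\omega_N\avr(\X)))^{1/N}$ in the cone, namely $(N-2)N\omega_N\avr(\X)R^{N-2}$. Rewriting this gives exactly $(N-2)N\omega_N^{2/N}\avr(\X)^{2/N}\mm(E)^{(N-2)/N}$. The explicit minimization is a routine Euler--Lagrange or H\"older computation.

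The main obstacle is making this step rigorous: the singular part of $\mu$, the set $\{|\d u|=0\}$, and the null set of levels where the coarea identity fails all need care. We will handle them by first approximating with $\Lip_{bs}$ functions, or by working with $\int_{\{u>t\}}|\d u|^2$ and a Hölder inequality in $t$ that never uses $\mu'$. Our first attempt is the standard approach through the inequality
\[
\mu(s)-\mu(t)\ge\int_s^t\int|\d u|^{-1}\d\Per\,\d\tau .
\]

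For rigidity, assume $X$ is $\RCD(0,N)$ and equality holds for $E$ with $\mm(E)>0$. We take $u$ to be the potential from Proposition \ref{prop:electro cap formula}, which realizes $\Cap(E)$. Equality then forces equality in the isoperimetric inequality for a.e.\ superlevel set $\{u>t\}$, $t\in(0,1)$. The known rigidity for the sharp isoperimetric inequality in $\RCD(0,N)$ spaces with $\avr>0$ then says each such set is a ball centred at some point and $\X$ is a cone with that tip. The centres must all coincide with a single tip $x_0$, since the sets are nested and the cone tip is unique unless $\X$ is Euclidean, in which case we choose the centre consistently. Letting $t\uparrow1$ shows that $\{u\ge1\}\supset E$ is a ball, and equality of measures identifies $E$ $\mm$-a.e.\ with $B_R(x_0)$.
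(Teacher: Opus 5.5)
Your argument is correct in substance but follows a different route from the paper.

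\textbf{The paper's route.} It does not reprove a P\'olya--Szeg\H{o} inequality. It quotes Theorem \ref{thm:polya} and applies it to the capacitary potential of Proposition \ref{prop:electro cap formula} (in the $\CD$ case, to every bounded-support competitor). Equimeasurability then gives $\mm_N(\{u^*\ge 1\})\ge\mm(E)$, and the paper compares with the explicit capacity of $[0,r)$ in the model $([0,\infty),\sfd_{eu},\mm_N)$ from Lemma \ref{lem:cap in IN}.

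\textbf{Your route.} You unfold the proof of P\'olya--Szeg\H{o} (coarea, Cauchy--Schwarz on level sets, sharp isoperimetry) and never build $u^*$. Your one-dimensional step is easier than you fear and needs neither the change of variables nor Euler--Lagrange. First,
\[
1\le \int_0^1\frac{c_N^2\,\mu^{\frac{2(N-1)}{N}}}{-\mu'}\,\d t\int_0^1\frac{-\mu'}{c_N^2\,\mu^{\frac{2(N-1)}{N}}}\,\d t .
\]
Since $t\mapsto\frac{N}{N-2}\mu(t)^{-\frac{N-2}{N}}$ is non-decreasing on $(0,1)$, the integral of its a.e.\ derivative $-\mu'\mu^{-\frac{2(N-1)}{N}}$ is at most $\frac{N}{N-2}\mu(1^-)^{-\frac{N-2}{N}}\le\frac{N}{N-2}\mm(E)^{-\frac{N-2}{N}}$. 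This yields exactly the sharp constant. The singular part of $D\mu$ only helps. The set $\{|\d u|=0\}$ is harmless, because the coarea formula with $f=\nchi_{\{|\d u|=0\}}$ makes it $\Per(\{u>t\})$-negligible for a.e.\ $t$.

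\textbf{What each route buys.} Yours is self-contained apart from the isoperimetric inequality. In the equality case it needs only isoperimetric rigidity, not the rigidity part of Theorem \ref{thm:polya} with its hypothesis $(u^*)'\neq0$, which the paper must verify by identifying $u^*$ with the model potential. The paper's route is shorter and shows that $u$ itself is radial. It also avoids any coarea formula in the non-Hilbertian $\CD(0,N)$ case. Theorem \ref{thm:coarea} is stated only for $\RCD$ spaces, so there you would need:
\begin{itemize}
\item Miranda's coarea formula, with the identification $|Du|=|\d u|\mm$ valid in PI spaces;
\item the sharp isoperimetric inequality for $\CD(0,N)$ spaces with $\avr(\X)>0$.
\end{itemize}

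\textbf{A step that is wrong as written.} Your claim that all centres coincide does not hold. A cone can have a whole family of tips without being Euclidean (e.g.\ $\rr^k\times C(Z)$). Nested balls need not be concentric. In $\rr^N$ you cannot ``choose'' the centre of a given ball.

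You do not need coinciding centres. Take levels $t_n\uparrow1$ at which equality holds in the isoperimetric inequality. Then $\{u>t_n\}\to\{u\ge1\}$ in $L^1(\mm)$, and lower semicontinuity of the perimeter gives
\[
\Per(\{u\ge1\})\le\liminf_n c_N\,\mu(t_n)^{\frac{N-1}{N}}=c_N\,\mm(\{u\ge1\})^{\frac{N-1}{N}} .
\]
So $\{u\ge1\}$ is itself an equality case, and isoperimetric rigidity applies to it directly: it is a ball centred at a tip $x_0$ of the cone $\X$. Finally, equality in the chain forces $\mu(1^-)=\mm(E)$ in the last step of the one-dimensional estimate. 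Since $E\subset\{u\ge1\}$ up to null sets, $E$ coincides $\mm$-a.e.\ with that ball.
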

	We will prove this inequality via symmetrization, using a  {P{\'o}lya}-{Szeg{\H{o}}}-type inequality proved in \cite{NobiliViolo24} (which is a variant of the one proved in \cite{MondinoSemola20}).
	
	To state it  we  first recall the notion of distribution function and rearrangement using the terminology from \cite{NobiliViolo24} (see also \cite{NobiliViolo2025}).

	Let  $\Xdm$ be a metric measure space  and $u:\X\to [0,\infty]$ be a Borel function such that $\mm( \{u>t\})<\infty$ for all $t>0$. We define  $\mu:[0,+\infty)\to[0,\infty)$, the distribution function of $u$ as  $\mu(t)\coloneqq \mm(\{u> t\})$. For $u$ and $\mu$ as above, we consider the generalized inverse $u^{\#}:[0,\infty)\to [0,\esssup u]$ of $\mu$ defined by
	\begin{equation*}
		u^{\#}(s)\coloneqq 
		\begin{cases*}
			{\rm ess}\sup u & \text{if $s=0$},\\
			\inf\left\lbrace t > 0 \ :\ \mu(t)<s \right\rbrace &\text{if $s>0$}.
		\end{cases*}
	\end{equation*}
	Note that $u^{\#}$ is monotone non-increasing, left-continuous.
	
	The monotone non-increasing rearrangement $u^* : [0,\infty) \rightarrow \R^+$ is defined by
	\[ u^*(t) := u^\#(\omega_Nt^N ), \qquad \forall\, t \in [0,\infty).\]
	In particular $u^*$ is monotone non-increasing. The functions $u$ and $u^*$ are equimeasurable:
	\begin{equation}\label{eq:equimeas}
		\mm(\{u>t\})=\mm_N(\{u^*>t\}), \quad \forall\, t>0,
	\end{equation}
    where $\mm_N\coloneqq N\omega_N t^{N-1}\d t.$

	\begin{theorem}[Euclidean {P{\'o}lya}-{Szeg{\H{o}}} inequality and rigidity]\label{thm:polya}
		Let $(\X,\sfd,\mm)$ be a ${\cd}(0,N)$ space with $N \in(1,\infty)$ and $\avr(\X)>0$. Let $u\in \W_\loc(\X)$ be non-negative with $|\d u|\in L^2(\mm)$ and such that  $\mm( \{u>t\})<\infty$ for all $t>0$.
		Then  $u^*\in {\sf AC}_\loc(0,\infty)$ and
		\begin{equation}\label{eq:PZ}
			\int |\d u|^2\, \d \mm \ge {\sf AVR}(\X)^\frac{2}{N} \int |u'|^2\, \d \mm_N(t).
		\end{equation}
        Moreover if $\Xdm$ is an $\RCD(0,N)$ space and equality holds in \eqref{eq:PZ} with both sides non-zero and $(u^*)'\neq 0$ a.e.\ in $\{\essinf u<u^*<\esssup u \}$, then  there exists $x_0$ such that
       \begin{equation}
           u=u^*\circ (\avr(\X)^\frac{1}{N}\sfd_{x_0}), \quad \mm-a.e.
       \end{equation}
       and  $\X$  is an $N$-Euclidean cone with tip $x_0.$
	\end{theorem}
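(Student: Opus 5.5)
The plan is the classical Talenti-type argument. It combines the coarea formula of Theorem \ref{thm:coarea}, the sharp isoperimetric inequality in $\cd(0,N)$ spaces with Euclidean volume growth, and Cauchy--Schwarz on level sets. The isoperimetric inequality we use is
\[
\Per(F)\ge N\omega_N^{\frac1N}\avr(\X)^{\frac1N}\mm(F)^{\frac{N-1}{N}},
\]
see e.g.\ Balogh--Krist\'aly and Cavalletti--Manini. We first reduce to $u$ bounded with $\mu(t)<\infty$ for all $t>0$, using truncations $\min(u,k)$ and monotone convergence on both sides of \eqref{eq:PZ}.

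\textbf{Step 1 (monotonicity of $\mu$).} Write $P(t)\coloneqq \Per(\{u>t\})$. By the coarea formula applied with $f=\nchi_{\{|\d u|>0\}}|\d u|^{-1}$ (truncated to make it admissible), we get for a.e.\ $t$
\[
-\mu'(t)\ge \int |\d u|^{-1}\,\d P(t).
\]
Here $\mu'$ denotes the a.e.\ derivative of the monotone function $\mu$. The inequality holds because the singular part of $D\mu$ and the contribution of $\{|\d u|=0\}$ only help.

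\textbf{Step 2 (Cauchy--Schwarz and isoperimetry).} By Cauchy--Schwarz on the measure $P(t)$,
\[
P(t)^2\le \Big(\int |\d u|\,\d P(t)\Big)\Big(\int |\d u|^{-1}\,\d P(t)\Big).
\]
Integrating in $t$ and applying the coarea formula again gives
\[
\int |\d u|^2\,\d\mm=\int_0^\infty\!\!\int |\d u|\,\d P(t)\,\d t\ge \int_0^\infty \frac{P(t)^2}{-\mu'(t)}\,\d t\ge \avr(\X)^{\frac2N}\int_0^\infty \frac{N^2\omega_N^{\frac2N}\mu(t)^{\frac{2(N-1)}{N}}}{-\mu'(t)}\,\d t .
\]
The last integral is exactly $\int|(u^*)'|^2\,\d\mm_N$. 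Indeed, $u^*$ is radial in the model space $(\rr^+,|\cdot|,\mm_N)$, it is equimeasurable with $u$ by \eqref{eq:equimeas}, and every inequality above is an equality for radial functions there. The change of variables $s=u^*(r)$ gives this identity once $u^*$ is known to be locally absolutely continuous.

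\textbf{Step 3 (absolute continuity of $u^*$).} This step must be done beforehand. The same chain of inequalities, localized to $\{t_1<u<t_2\}$, bounds the oscillation of $u^\#$ over an interval $(s_1,s_2)$ of $\mu$-values by the energy of $u$ in the corresponding strip times a factor depending only on $s_1$ and $\avr(\X)$. Combined with the fact that $\{|\d u|=0\}\cap\{t_1<u<t_2\}$ cannot create jumps in $u^\#$, which is a coarea argument, this yields $u^*\in{\sf AC}_\loc(0,\infty)$.

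\textbf{Step 4 (rigidity, the main obstacle).} If equality holds, every inequality above is an equality for a.e.\ $t$. Equality in the isoperimetric inequality forces each $\{u>t\}$ to be a ball $B_{r(t)}(x_t)$ in an $N$-Euclidean cone with tip $x_t$. This uses the $\RCD$ rigidity of the isoperimetric inequality due to Antonelli--Pasqualetto--Pozzetta--Semola. Equality in Cauchy--Schwarz forces $|\d u|$ to be constant $P(t)$-a.e.\ on each level set. The assumption $(u^*)'\ne0$ a.e.\ in $\{\essinf u<u^*<\esssup u\}$ excludes plateaus, so $\mu$ has no singular part there. The hard point is to show that the centres $x_t$ coincide. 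Since the sets are nested balls in a cone, each centre must be a tip, and a non-Euclidean cone has at most one tip. In the remaining case where $\X=\rr^N$ one instead uses the constancy of $|\d u|$ on the level sets to rule out non-concentric nesting. This identifies $u=u^*\circ(\avr(\X)^{\frac1N}\sfd_{x_0})$ $\mm$-a.e. I expect the synchronisation of the centres, together with controlling the exceptional null set of $t$, to be the most delicate part.
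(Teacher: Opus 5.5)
The paper does not prove this statement itself; it quotes it from \cite{NobiliViolo24}, which is a variant of \cite{MondinoSemola20}. Your Steps 1--3 (coarea, Cauchy--Schwarz on level sets, Balogh--Krist\'aly isoperimetry, absolute continuity of $u^*$ via the strip estimate) are the standard Talenti-type route and are fine in outline.

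\textbf{The gap is in Step 4, at the synchronisation of centres.} The argument you sketch there would fail. Your claim that a non-Euclidean cone has at most one tip is false. If $x_0\neq x_1$ are both tips, the ray from $x_0$ through $x_1$ and the ray from $x_1$ through $x_0$ together form a line. Hence $\X$ splits, and every point of the Euclidean factor is a tip. For instance, $\rr\times C(S^1_\ell)$ with $\ell<2\pi$ is a non-Euclidean $3$-dimensional cone with a whole line of tips. So for every cone that splits off a line, not only for $\rr^N$, the balls $B_{r(t)}(x_t)$ may a priori have drifting centres, and you give no argument for this case.

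Even for $\rr^N$, ``constancy of $|\d u|$ on the level sets rules out non-concentric nesting'' is the whole Brothers--Ziemer difficulty, and it uses the wrong information. Constancy on each single level set says nothing about how different level sets sit relative to each other. What matters is that these constants are exactly the model ones.

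\textbf{The missing idea is to turn the equality cases into a $1$-Lipschitz, distance-like function.}
\begin{enumerate}[label=\roman*)]
\item The hypothesis on $(u^*)'$ rules out both jumps and a Cantor part of $\mu$, not only plateaus. So $\mu$ is absolutely continuous on compact subintervals of $(\essinf u,\esssup u)$, and equality in Step 1 gives $\mm(\{|\d u|=0\}\cap\{\essinf u<u<\esssup u\})=0$.
\item Equality in Cauchy--Schwarz and in the isoperimetric inequality then gives $|\d u|=c(u)$ $\mm$-a.e. Here $c(t)=\Per(\{u>t\})/(-\mu'(t))=-1/r'(t)$, where $r(t)$ is defined by $\avr(\X)\omega_N r(t)^N=\mu(t)$.
\item Hence $v\coloneqq r(u)$, taken constant on the top and bottom plateaus, satisfies $|\d v|\le 1$ a.e. By the Sobolev-to-Lipschitz property it is $1$-Lipschitz, and $\mm(\{v<\rho\})=\avr(\X)\omega_N\rho^N$.
\item Take a good $\rho_1$, so that $\{v<\rho_1\}=B_{\rho_1}(x_0)$ a.e.\ with $x_0$ a tip. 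By continuity and full support, $v\le\rho_1$ on $B_{\rho_1}(x_0)$, so the Lipschitz bound gives $B_\rho(x_0)\subset\{v<\rho\}$ for every $\rho>\rho_1$. Both sets have measure $\avr(\X)\omega_N\rho^N$ because $x_0$ is a tip, so they coincide a.e.
\item Applying this to good $\rho_1$ arbitrarily close to the bottom of the range shows that all centres coincide. It also handles the exceptional null set of levels, and gives $u=u^*\circ(\avr(\X)^{1/N}\sfd_{x_0})$.
\end{enumerate}

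If $u$ has no top plateau, you can even bypass isoperimetric rigidity. A point with $v(x_0)=0$ gives $B_r(x_0)\subset\{v<r\}$, hence equality in Bishop--Gromov for all $r$, and ``volume cone implies metric cone'' concludes.
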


	The last ingredient for the proof of the isocapacitary inequality is the following computation relative to the capacity in the model one-dimensional space.
	\begin{lemma}\label{lem:cap in IN}
		Consider the one-dimensional metric measure space $\Xdm=([0,\infty),\sfd_{eu},\mm_N)$ for $N>2.$ Then 
		\[
		\Cap([0,r))= (N-2)N \omega_N^{\frac2N}\mea_N([0,r))^\frac{{N-2}}{N}, \quad \forall \, r>0.
		\]
	\end{lemma}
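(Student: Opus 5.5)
The plan is to compute the capacity of $[0,r)$ explicitly. I would show that the radial Newtonian profile $f_r(t)\coloneqq\min\{1,(r/t)^{N-2}\}$ is optimal, and that its energy equals the claimed constant. First, the identification of the constant. On $(r,\infty)$ we have $f_r'(t)=-(N-2)r^{N-2}t^{1-N}$, so
\[
\int_r^\infty |f_r'|^2\, N\omega_N t^{N-1}\,\d t=(N-2)^2 N\omega_N r^{2N-4}\int_r^\infty t^{1-N}\,\d t=(N-2)N\omega_N r^{N-2}.
\]
Since $\mm_N([0,r))=\omega_N r^N$, we get $\omega_N^{2/N}\mm_N([0,r))^{(N-2)/N}=\omega_N r^{N-2}$, and the two sides of the claimed identity agree. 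Throughout I use that in this one-dimensional weighted space (the weight $t^{N-1}$ is locally bounded above and below on $(0,\infty)$) Sobolev functions are locally absolutely continuous on $(0,\infty)$ and $|\d f|=|f'|$ a.e.

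\textbf{Upper bound.} The function $f_r$ is not admissible as it stands, because we need $f\ge1$ in a neighborhood of $[0,r)$. I would use the competitors $f_{r+\eps}$. Each of them equals $1$ on $[0,r+\eps)$, lies in $\W_\loc$, is non-negative, and vanishes at infinity. By Lemma \ref{lem:cap bs} iii) they are admissible, hence
\[
\Cap([0,r))\le (N-2)N\omega_N(r+\eps)^{N-2}.
\]
Letting $\eps\to0^+$ gives the upper bound.

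\textbf{Lower bound.} By Lemma \ref{lem:cap bs} ii) it is enough to consider non-negative $f\in\W$ with bounded support and $f\ge1$ a.e.\ near $[0,r)$. Pick $R$ beyond the support, so that $f(R)=0$ and, by continuity, $f(r)\ge1$. Then by Cauchy--Schwarz
\[
1\le\int_r^R|f'|\,\d t\le\Big(\int_r^R|f'|^2 t^{N-1}\,\d t\Big)^{1/2}\Big(\int_r^\infty t^{1-N}\,\d t\Big)^{1/2}.
\]
The last factor equals $(r^{2-N}/(N-2))^{1/2}$. Rearranging gives
\[
\int|f'|^2\,\d\mm_N\ge N\omega_N(N-2)r^{N-2}.
\]
This is the same constant as before, and it completes the argument.

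The only delicate point is the justification that admissible Sobolev functions are absolutely continuous with $|\d f|=|f'|$ in this weighted one-dimensional setting. This is standard, since the weight is locally comparable to a constant away from $0$; the endpoint $t=0$ plays no role because $f\equiv1$ near it.
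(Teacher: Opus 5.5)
Your proof is correct, but it takes a different route from the paper. The paper reduces to $r=1$ by scaling and checks that $\min(t^{2-N},1)$ is the electrostatic potential of $[0,1]$ in the model space, which is $\RCD(0,N)$ with $\avr=1$. It then reads off the capacity as the Dirichlet energy of this function via Proposition \ref{prop:electro cap formula}. Optimality of the Newtonian profile is therefore delegated to the general machinery behind that proposition: existence of a capacitary minimizer, the comparison principle, and identification with the electrostatic potential.

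You instead prove optimality by hand. For every admissible $f$ with bounded support you get $1\le\int_r^R|f'|\,\d t$, and a single Cauchy--Schwarz inequality against the weight $t^{N-1}$ gives $\int|f'|^2\,\d\mm_N\ge (N-2)N\omega_N r^{N-2}$. Equality in the Cauchy--Schwarz step holds exactly when $|f'|\propto t^{1-N}$, i.e.\ for the Newtonian profile.

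What each route buys: your argument is elementary, self-contained, and uses no curvature-dimension theory. Its only input is the standard identification of $\W$ on $(a,\infty)$, $a>0$, with the classical weighted Sobolev space (locally absolutely continuous representative, $|\d f|=|f'|$ a.e.). The paper's argument is shorter given the results already established, and it stays within the electrostatic-potential framework used in the rest of that section. Your two bounds also treat the half-open interval $[0,r)$ exactly as in the statement.

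One step is unnecessary. Since $[0,r)$ is relatively open in $[0,\infty)$, $f_r$ itself is already $\ge 1$ on a neighborhood of $[0,r)$ and is admissible. The shifted competitors $f_{r+\eps}$ are harmless but not needed.
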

	\begin{proof}
		We only  consider $r=1$, as the general case  follows by rescaling. It can be checked using the definitions that the function   $u(x)\coloneqq \min(x^{2-N},1)$ is the electrostatic potential for $[0,1].$ 
		Since $([0,\infty),\sfd_{eu},\mm_N)$ is an $\rcd(0,N)$ space  with Euclidean volume growth, from Proposition \ref{prop:electro cap formula} we have 
		\[
		\Cap([0,1])= N\omega_N\int_1^\infty |u'|^2(t)t^{N-1}\d t=N\omega_N (N-2)^2 \int_1^\infty t^{1-N} \d t=N\omega_N (N-2).
		\]
	\end{proof}

	We can now prove the isocapacitary inequality.
	\begin{proof}[Proof of Theorem \ref{thm:isocap}]
    We can assume that $\mm(E)>0$, otherwise there is nothing to prove.
    For the moment we consider the case that $(\X,\sfd,\mm)$ is an $\RCD(0,N)$ space. Since $E$ is bounded it holds $\Cap(E)<\infty.$  By Proposition \ref{prop:electro cap formula} there exists $u\in \W_\loc(\X)$ such that $0\le u\le 1$, $u\ge 1$  $\Cap$-a.e.\ in $E$, vanishing at infinity and such that
        \[
        \Cap(E)=\int_\X |\d u|^2\d \mm.
        \]
        Since $u$ vanishes at infinity we have $\mm(\{u>t\})<\infty$ for all $t>0$. 
		Hence we can apply the {P{\'o}lya}-{Szeg{\H{o}}} inequality given in Theorem \ref{thm:polya} to obtain that $u^*\in {\sf AC}_\loc(0,\infty)$ and 
		\begin{equation}\label{eq:gradu > u'}
			\int |\d u|^2\, \d \mea\ge \avr(\X)^{\frac2N} \int | (u^*)'|^2\,\d \mea_N.
		\end{equation}
		 By equimeasurability,
		\begin{align*}
			\mea_N(\{u^*\ge 1\})&=\lim_{t\uparrow 1}\mea_N(\{u^*> t\})=\lim_{t\uparrow 1}\mea(\{u> t \})\\
			&=\mea(\{u\ge 1\}) \ge  \mea(E),
		\end{align*}
		where the last inequality follows from $u \ge 1$ in $E$ $\mm$-a.e. Since $u^*$ is continuous and monotone non-increasing, $\{u^*\ge 1\}=[0,\bar r]$ for some $\bar r>0.$  Therefore, by Lemma \ref{lem:cap in IN} for all $r<\bar r$ it holds
		\begin{equation}\label{eq:u*'>cap}
			\int | (u^*)'|^2\,\d \mea_N\ge \Cap([0,r])=(N-2)N \omega_N^{\frac2N}\mea_N([0,r))^\frac{N-2}N.
		\end{equation}
		Combining \eqref{eq:u*'>cap} with \eqref{eq:gradu > u'} and sending $r\uparrow \bar r$ we conclude recalling that $\mm_N([0,\bar r])\ge \mm(E).$ Suppose now that equality holds in \eqref{eq:isocap}. Then equality holds in \eqref{eq:gradu > u'} and also in \eqref{eq:u*'>cap} with $r=\bar r$.   From the latter and Proposition \ref{prop:electro cap formula} we deduce that $u^*$ must be the electrostatic potential of $[0,\bar r]$ and thus $(u^*)'\neq 0$ in $\{0<u^*<1\}$. Hence we can apply the second part of Theorem \ref{thm:polya} and deduce that $u$ is $\mm$-a.e.\ equal to a radial function around a point $x_0.$  Moreover we must have the equality $\mm_N([0,\bar r])= \mm(E)$, which implies $\mea(\{u\ge 1\})=\mm(E).$ Since $E\subset \{u\ge 1\}$ up to $\mm$-null sets we deduce that $E=\{u\ge 1\}$ up to $\mm$-null sets. Since $u$ is  $\mm$-a.e.\ equal to a radial function we conclude that $\{u\ge 1\}$ is a ball  up to $\mm$-null sets which concludes the proof.

        If $(\X,\sfd,\mm)$ is only a $\CD(0,N)$ space we can not use Proposition \ref{prop:electro cap formula}. Instead, we apply the rearrangement procedure to all non-negative functions $u\in \W(\X)$ with bounded support that are $\ge 1$ $\mm$-a.e.\ in a neighborhood of $E$. Passing  to the infimum, recalling ii) in Lemma \ref{lem:cap bs} and using again Lemma \ref{lem:cap in IN} we obtain \eqref{eq:isocap}. This procedure, however, does not lead to a rigidity statement as above.
	\end{proof}

	\subsection{Willmore inequality and rigidity}
	In this part we consider $\Xdm$ an $\RCD(0,N)$ space, $N\in(2,\infty),$ with $\avr(\X)>0$ and  $u$  the electrostatic potential of a compact set $K\subset \X$ (as in Definition \ref{def:electro}). 
	
	For any $\beta\ge \frac{N-2}{N-1}$ we define
	\begin{equation}\label{eq:defU}
		U_{\beta}(t)\coloneqq\frac{1}{t^{\beta \frac{N-1}{N-2}}}\int |\d u|^{\beta+1}\, \d \Per(\{u<t\}) \, \in L^1_{\loc}(0,1),
	\end{equation}
	which by Lemma \ref{lem:w12} admits a locally absolutely continuous representative, still denoted by $U_{\beta}$. Note that under the current assumptions, $\pert={\rm Per}(\{u<t\},\X\setminus K)$ as measures for all $t\in(0,1)$, hence we will use $\pert$ for brevity.

	It turns out that $U_\beta$ is monotone non-decreasing. For Riemannian manifolds this fact was  discovered by Colding \cite{Co12} in the case of the Green function  and later proved in \cite{AgostinianiFogagnoloMazzieri20} in general form. This monotonicity was generalized  to the setting of $\RCD$ spaces in \cite{GigliViolo23}, which we state here in a simplified and weaker form although sufficient for our purposes.
	
	\begin{theorem}[Monotonicity of $U_\beta$]\label{thm:monotonicity}
		Let $\X$ be an ${\rm RCD}(0,N)$  space, $N>2,$ with Euclidean volume growth and let  $u$ be the electrostatic potential of a compact set $K$. Let $U_{\beta}$ be the function defined in \eqref{eq:defU}, with $\beta \ge \frac{N-2}{N-1}$. Then $U_{\beta}$ is monotone non-decreasing in $(0,1)$.
		
		Moreover $U_\beta'\in BV_\loc(0,1)$,
		\begin{equation}\label{eq:Uprimo}
			U_\beta'(t)=\beta t^{-\frac{\beta(N-1)}{N-2}}\left[\int -H(u)|\d u|^{\beta} -\frac{N-1}{N-2} \frac1t  |\d u|^{\beta+1} \d\pert   \right] , \quad a.e.\ t\in (0,1)
		\end{equation}
		and the maximal lower semicontinuous representative of $U_\beta'$ satisfies
		\begin{equation}\label{eq:quant mon}
			t^2U_\beta'(t)\ge C_{\beta,N} \int_{\{u<t\} }u^2 |\d |\d u^\frac{1}{2-N}|^\frac{\beta}{2}|^2\d \mm, \quad \text{for all } t\in(0,1),
		\end{equation}
		where $C_{\beta,N}=\frac{4}{\beta} \left(\beta-\frac{N-2}{N-1}\right)$ and the right-hand side is taken to be zero when $\beta=\frac{N-2}{N-1}.$
	\end{theorem}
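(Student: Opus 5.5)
The statement is described as a simplified and weaker form of the monotonicity result already proved in \cite{GigliViolo23}. The plan is therefore to reduce it to that work and to the tools of the present paper: Lemma \ref{lem:w12}, Lemma \ref{lem:bv}, Corollary \ref{cor:H formula for harmonic functions} and the coarea formula of Theorem \ref{thm:coarea}.

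\textbf{Step 1: the derivative formula \eqref{eq:Uprimo}.} Fix $\phi\in C^1_c(0,1)$ and use the coarea formula to write
\[
-\int \phi'(t)\int|\d u|^{\beta+1}\d\pert\,\d t=-\int \phi'(u)|\d u|^{\beta+2}\d\mm .
\]
Rewrite $\phi'(u)|\d u|^2=\la\nabla\phi(u),\nabla u\ra$ and integrate by parts, as in the proof of Lemma \ref{lem:w12}. Since $\Delta u=0$, this gives
\[
\int\phi(u)\la\nabla|\d u|^\beta,\nabla u\ra\d\mm=\beta\int\phi(u)\la\nabla|\d u|,\nabla u\ra|\d u|^{\beta-1}\d\mm .
\]
By Corollary \ref{cor:H formula for harmonic functions}, $\la\nabla|\d u|,\nabla u\ra=-H(u)|\d u|^2$. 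Applying the coarea formula again yields
\[
\frac{d}{dt}\int|\d u|^{\beta+1}\d\pert=-\beta\int H(u)|\d u|^\beta\d\pert \quad \text{in } \mathcal D'(0,1).
\]
Multiplying by the smooth weight $t^{-\beta\frac{N-1}{N-2}}$ and using the product rule gives exactly \eqref{eq:Uprimo}.

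\textbf{Step 2: regularity of $U_\beta'$.} Lemma \ref{lem:bv} with $g\equiv1$ (after a cutoff) shows that the $H$-term lies in $\BV_\loc$. Lemma \ref{lem:w12} shows that the other term is continuous. Hence $U_\beta'\in\BV_\loc(0,1)$, and the maximal l.s.c.\ representative of $U_\beta'$ is well defined.

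\textbf{Step 3: the quantitative inequality \eqref{eq:quant mon}.} This is where the real content lies: a Bochner/Kato-type argument on the function $u^{\frac1{2-N}}$, which plays the role of a distance in the model cone. I would import it from \cite{GigliViolo23}, where monotonicity of $U_\beta$ and the lower bound on $U_\beta'$ are proved for $\RCD(0,N)$ spaces with Euclidean volume growth. The paper says it states a simplified version, so this reduction is expected.

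The argument there runs as follows. One expresses $t^2U_\beta'(t)$ as the flux of a suitable vector field through $\{u=t\}$. Using the weak Bochner inequality with the improved Kato constant from Theorem \ref{cor:harmest2}, one shows this vector field has divergence bounded below by $C_{\beta,N}u^2|\d|\d u^{\frac1{2-N}}|^{\beta/2}|^2$. Integrating over $\{u<t\}$ then requires controlling the boundary term at infinity; this is where Euclidean volume growth and the asymptotics of $u$ enter.

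Once \eqref{eq:quant mon} holds for a.e.\ $t$, the following consequences are routine:
\begin{itemize}
\item non-negativity of $U_\beta'$, hence monotonicity of $U_\beta$ (which is locally absolutely continuous);
\item the passage from a.e.\ $t$ to every $t$: the right-hand side of \eqref{eq:quant mon} is non-decreasing in $t$ and left-continuous, by monotone convergence on $\{u<t\}$. Taking the l.s.c.\ envelope $\sup_\delta\essinf_{B_\delta(t)}$ of $t^2U_\beta'$ therefore preserves the inequality at every $t$, since $\essinf_{(t-\delta,t+\delta)}$ dominates the value of the right-hand side at $t-\delta$, which tends to its value at $t$.
\end{itemize}

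\textbf{Main obstacle.} The Bochner step together with the boundary term at infinity in Step 3 is the hardest part. I would rely on \cite{GigliViolo23} for it rather than redo the computation.
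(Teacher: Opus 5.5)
Your proposal is correct. For the core content, namely the monotonicity of $U_\beta$ and the quantitative bound \eqref{eq:quant mon}, it coincides with the paper, which also imports both from \cite[Theorem 5.4]{GigliViolo23}. The two arguments differ only in the auxiliary parts.

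\textbf{The derivative formula.} For \eqref{eq:Uprimo}, the paper starts from the flux representation
$U_\beta'(t)=\int\la \frac{\nabla u}{|\d u|},\nabla\big(|\d u|^\beta u^{-\beta\frac{N-1}{N-2}}\big)\ra\,\d\pert$ of \cite[Proposition 5.5]{GigliViolo23} and expands the gradient. You instead derive the formula directly by coarea and integration by parts, which is essentially the computation in Lemma \ref{lem:w12} with $g\equiv 1$, followed by the product rule with $t^{-\beta\frac{N-1}{N-2}}$. This is valid. You must, however, apply Lemmas \ref{lem:w12} and \ref{lem:bv} and Corollary \ref{cor:H formula for harmonic functions} on $u^{-1}(a,b)$ with $[a,b]\subset(0,1)$. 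That set is relatively compact in $\X\setminus K$ because $u\to 0$ at infinity and $u\to 1$ at $\partial K$, whereas $u^{-1}(0,1)=\X\setminus K$ itself is not.

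\textbf{BV regularity and the pointwise bound.} The paper uses \cite[Proposition 5.6]{GigliViolo23}: $t^2U_\beta'$ has non-negative distributional derivative. Hence it is $\BV_\loc$, and its left-continuous representative, for which \cite{GigliViolo23} states the bound, is the maximal l.s.c.\ one. You proceed differently in two ways:
\begin{enumerate}
\item You get $\BV_\loc$ from Lemma \ref{lem:bv} together with the $W^{1,1}_\loc$ conclusion of Lemma \ref{lem:w12}. You should invoke $W^{1,1}_\loc$ here, not mere continuity.
\item You upgrade an a.e.\ inequality to every $t$ using only that the right-hand side is non-decreasing and left-continuous in $t$. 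This is correct, since the l.s.c.\ envelope commutes with multiplication by the continuous positive factor $t^2$.
\end{enumerate}

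\textbf{Comparison.} Your route is more self-contained: from \cite{GigliViolo23} it needs only the a.e.\ form of the quantitative inequality. The paper's route is shorter and also records that $t^2U_\beta'$ is itself monotone.
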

	\begin{proof}
		The monotonicity of $U_\beta$ and \eqref{eq:quant mon} are proved in \cite[Theorem 5.4]{GigliViolo23}. Moreover in \cite[Proposition 5.5]{GigliViolo23} it is shown that
		\[
		U_\beta'(t)=\int \la \frac{\nabla u}{|\d u|},\nabla \left(\frac{|\d u|^\beta}{u^{\beta\frac{N-1}{N-2}}}\right)\ra \d \pert, \quad a.e.\ t\in (0,1).
		\]
		Formula \eqref{eq:Uprimo} then follows developing the gradient term, recalling both that $|\d u|^\beta\in \W_\loc(\Omega)$ by Theorem \ref{cor:harmest2} where $\Omega= \X \setminus K$, and that $H(u)=-\frac{\la \nabla |\d u|,\nabla u \ra }{|\d u|^2} $ (as defined in Corollary \ref{cor:H formula for harmonic functions}). Finally \eqref{eq:quant mon} appears in \cite[Theorem 5.4]{GigliViolo23} for the left-continuous representative of $t^2U_\beta'(t)$. However, from \cite[Prop.\ 5.6]{GigliViolo23} we know that the distributional derivative of $t^2U_\beta'(t)$ is non-negative, hence this left-continuous representative coincides with the maximal lower semicontinuous representative. \end{proof}
	
	In the following, $\sigma_N\coloneqq N\omega_N.$
	\begin{prop}[Lower bound for $U_\beta$]\label{prop:lower bound U}
		Let $\X$ be an $\rcd(0,N)$ space, $N>2$, with Euclidean volume growth, $u$ be an electrostatic potential for a compact set $K$ and $U_{\beta}$ be the continuous representative of the  function defined in \eqref{eq:defU}, with $\beta \ge \frac{N-2}{N-1}$.
		
		Then
		\begin{equation}\label{eq:lower bound U}
			U_{\beta}(t)\ge (N-2)^{\beta \frac{N-1}{N-2}}(\avr(\X)\sigma_N)^{\frac{\beta}{N-2}}{\rm Cap}(K)^{\frac{N-2-\beta}{N-2}}, \quad \forall t \in(0,1).
		\end{equation}
	\end{prop}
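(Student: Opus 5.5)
Since $U_\beta$ is monotone non-decreasing on $(0,1)$ by Theorem \ref{thm:monotonicity}, it suffices to bound $\lim_{t\to 0^+}U_\beta(t)$ from below; this limit exists in $[0,\infty]$. So the plan is to compute, or bound below, the asymptotic behaviour of $U_\beta$ as $t\downarrow 0$, i.e.\ near infinity in $\X$.

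\textbf{Step 1 (from $\beta$ to the case $\beta=0$ via H\"older).} By the coarea formula, $\int |\d u|\,\d\pert$ equals $\int_{\{u=t\}}|\d u|$, which formally is the flux of $\nabla u$ through the level set. Harmonicity in $\X\setminus K$ makes this flux independent of $t$. Equation \eqref{eq:electro cap formula} together with coarea identifies the constant:
\[
\int_0^1\int|\d u|\,\d\pert\,\d t=\Cap(K),
\]
so the flux equals $\Cap(K)$ for a.e.\ $t$. To make this rigorous I would use Lemma \ref{lem:w12} with $\alpha=1$ and $g=1$; this is admissible, since the integration by parts there gives zero derivative when $\Delta u=0$. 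Next, apply H\"older with exponents $\beta+1$ and $\frac{\beta+1}{\beta}$:
\[
\Cap(K)=\int|\d u|\,\d\pert\le \Big(\int|\d u|^{\beta+1}\d\pert\Big)^{\frac1{\beta+1}}\Per(\{u<t\})^{\frac{\beta}{\beta+1}} .
\]
Hence
\[
\int|\d u|^{\beta+1}\d\pert\ \ge\ \Cap(K)^{\beta+1}\Per(\{u<t\})^{-\beta}.
\]

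\textbf{Step 2 (upper bound on the perimeter of level sets).} For small $t$, $\{u\ge t\}$ should be asymptotically a ball of radius $r_t$ with $t\approx \frac{\Cap(K)}{(N-2)\sigma_N\avr(\X)}r_t^{2-N}$, and its perimeter should be $\approx \sigma_N\avr(\X) r_t^{N-1}$. This would give
\[
\Per \approx \sigma_N\avr(\X)\Big(\tfrac{\Cap(K)}{(N-2)\sigma_N\avr(\X)t}\Big)^{\frac{N-1}{N-2}}.
\]
Substituting into Step 1 and multiplying by $t^{-\beta\frac{N-1}{N-2}}$, the powers of $t$ cancel. The constant becomes
\[
(N-2)^{\beta\frac{N-1}{N-2}}(\avr\sigma_N)^{\frac{\beta}{N-2}}\Cap(K)^{\beta+1-\beta\frac{N-1}{N-2}},
\]
and $\beta+1-\beta\frac{N-1}{N-2}=\frac{N-2-\beta}{N-2}$, which is exactly \eqref{eq:lower bound U}. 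Rigorously, rather than proving asymptotics of the perimeter, I would only need $\liminf_{t\to0}$ of the product, and it suffices to use the averaged quantity. The argument runs as follows. Since $U_\beta$ is nondecreasing, $U_\beta(t)\ge \fint_{t/2}^{t}U_\beta$. Applying Step 1 pointwise and then Jensen/H\"older in $s$, it is enough to bound $\int_{t/2}^{t}\Per(\{u<s\})\d s$ from above. By coarea this integral equals $\int_{\{t/2<u<t\}}|\d u|\d\mm$. I would control it through the sharp gradient bound $|\d u|\le C\,u^{\frac{N-1}{N-2}}$ asymptotically, combined with the asymptotics $u\sim \frac{\Cap(K)}{(N-2)\sigma_N\avr}\sfd^{2-N}$ and the volume growth $\mm(B_r)\sim\avr\,\omega_N r^N$. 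These asymptotics for electrostatic potentials on $\RCD(0,N)$ spaces with Euclidean volume growth are available in \cite{GigliViolo23}, and I would invoke them there.

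\textbf{Main obstacle.} The difficulty lies in Step 2: obtaining the sharp constant in the asymptotics of $\int_{\{t/2<u<t\}}|\d u|$, or equivalently the perimeters, with the correct factor $\avr(\X)$. A possible alternative avoids gradient asymptotics. We have $\int_{\{u>s\}}|\d u|^2=\int_s^1\Cap(K)\,\d\tau=\Cap(K)(1-s)$ by coarea and the constant flux. Combined with the asymptotic $\mm(\{u>s\})\sim \avr\,\omega_N\big(\tfrac{\Cap(K)}{(N-2)\sigma_N\avr\, s}\big)^{\frac N{N-2}}$, this yields the averaged perimeter bound through Cauchy--Schwarz, namely
\[
\int_{\{a<u<b\}}|\d u|\le \Big(\mm(\{a<u<b\})\int_{\{a<u<b\}}|\d u|^2\Big)^{1/2}.
\]
I would first try this route, since the volume asymptotics of the superlevel sets is the weakest input needed.
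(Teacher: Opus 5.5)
Your reduction via monotonicity and your Step 1 are fine. For the constancy of the flux $\int|\d u|\,\d\Per(\{u<s\})=\Cap(K)$, cite \cite[Proposition 5.7]{GigliViolo23} together with \eqref{eq:electro cap formula}. Note that Lemma \ref{lem:w12} as stated requires $\alpha\ge 1+\frac{N-2}{N-1}$, although its proof adapts to $\alpha=1$.

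\textbf{The gap is Step 2.} Everything rests on sharp asymptotics at infinity:
\begin{itemize}
\item $u\,\sfd(x_0,\cdot)^{N-2}\to\frac{\Cap(K)}{(N-2)\sigma_N\avr(\X)}$;
\item for your main route, also a pointwise bound $|\d u|\le (C+o(1))\,u^{\frac{N-1}{N-2}}$ with the model constant $C$.
\end{itemize}
You do not prove these, and they are not what the paper takes from \cite{GigliViolo23}. There, \cite[Proposition 5.2]{GigliViolo23} only gives $c^{-1}\sfd^{2-N}\le u\le c\,\sfd^{2-N}$ with a non-explicit $c$, which cannot produce a sharp constant. The remark following this proposition says that identifying $\lim_{t\to0^+}U_\beta(t)$ needs a much more involved integral-asymptotic expansion of both $u$ and $|\d u|$, carried out in \cite{thesis}. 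A sharp pointwise gradient bound is even more delicate in the non-smooth setting, since under blow-down $|\d u|$ converges only in $L^2$. As it stands, you reduce the statement to a harder one.

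Even granting the asymptotics, your localizations already lose a fixed factor on a cone:
\begin{itemize}
\item Reducing to an upper bound for $\int_{t/2}^t\Per(\{u<s\})\,\d s$ discards the variation of $s^{-\beta\frac{N-1}{N-2}}$ on $[t/2,t]$.
\item Cauchy--Schwarz on $\{t/2<u<t\}$, where $|\d u|$ is not constant, is off by a factor $\sqrt{7/6}$ when $N=3$.
\end{itemize}
You would need annuli $\{a<u<b\}$ with $b/a\to1$, and then two-sided volume asymptotics, since $\mm(\{a<u<b\})$ is a difference of volumes.

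\textbf{The missing idea.} Use all levels in $(\tau,t)$ at once, apply Cauchy--Schwarz on each level set rather than across levels, and take the sharp volume bound from the isocapacitary inequality rather than from asymptotics.
\begin{itemize}
\item By Step 1 and monotonicity, for a.e.\ $s<t$ you have $\Per(\{u<s\})^\beta\ge s^{-\beta\frac{N-1}{N-2}}\Cap(K)^{\beta+1}/U_\beta(t)$.
\item On each level set, $\Per(\{u<s\})^2\le \Cap(K)\int|\d u|^{-1}\,\d\Per(\{u<s\})$.
\item By coarea, $\int_\tau^t\int|\d u|^{-1}\,\d\Per(\{u<s\})\,\d s\le\mm(\{u\ge\tau\})$.
\item Since $u/\tau$ is the electrostatic potential of $\{u\ge\tau\}$, you have $\Cap(\{u\ge\tau\})=\Cap(K)/\tau$. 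Theorem \ref{thm:isocap} then gives $\mm(\{u\ge\tau\})\le\big(\Cap(K)/(\tau(N-2)N\omega_N^{2/N}\avr(\X)^{2/N})\big)^{\frac N{N-2}}$.
\end{itemize}
Combining these,
\[
\Cap(K)^{\frac{2(\beta+1)}{\beta}}\,U_\beta(t)^{-\frac2\beta}\,\frac{N-2}{N}\Big(\tau^{-\frac N{N-2}}-t^{-\frac N{N-2}}\Big)\le \Cap(K)\,\mm(\{u\ge\tau\}).
\]
Multiplying by $\tau^{\frac N{N-2}}$ and letting $\tau\to0^+$ gives exactly \eqref{eq:lower bound U}, with no asymptotic input at all.

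Up to packaging, this is the paper's proof. It argues by contradiction and merges your H\"older step and the level-wise Cauchy--Schwarz into a single H\"older inequality with exponents $\frac{\beta+2}{2}$ and $\frac{\beta+2}{\beta}$. That inequality pairs $\int|\d u_t|^{\beta+1}\,\d\Per$ with $\int|\d u_t|^{-1}\,\d\Per$.
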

	\begin{proof}
		We argue by contradiction and we assume that  for some $\theta<\avr(\X)$ and some $\bar t \in(0,1)$
		\begin{equation}\label{eq:contradiction ass}
			U_{\beta}(\bar t)\le (N-2)^{\beta \frac{N-1}{N-2}}(\theta\sigma_N)^{\frac{\beta}{N-2}}\Cap(K)^{\frac{N-2-\beta}{N-2}}
		\end{equation}
		Before starting the proof we  fix once and for all a Borel representative of $|\d u|$. 
		
		We now make the preliminary observation that for all $\eps>0$, by the coarea formula (Theorem \ref{thm:coarea}) applied after an extension of $u$ gives
		\[
		\int_\eps^{1-\eps} \int \frac{1}{|\d u|}\d  \Per(\{u<s\})\d s =\int_{\{\eps<u<1-\eps\}}\nchi_{\{|\d u|>0\}}\d \mea \le \mea(\{u>\eps\})<+\infty,
		\]
		where $1/|\d u|$ is defined to be zero whenever $|\d u|=0$. 
		Hence $\int \frac{1}{|\d u|}\d \Per(\{u<s\})<+\infty$ for a.e. $s \in(0,1).$ Thus  there exists at least one
		$t_0 \in (0,\bar t)$ such that  $\int \frac{1}{|\d u|}\d \Per(\{u<t_0\})<+\infty$ and $U_\beta(t_0)=(t_0)^{-\beta\frac{N-1}{N-2}}\int |\d u|^{\beta+1}\, \d \Per(\{u<t_0\}).$ Moreover 
		\begin{equation}\label{eq:contr assumption 2}
			\begin{split}
				t_0^{-\beta\frac{N-1}{N-2}}\int |\d u|^{\beta+1}&\, \d \Per(\{u<t_0\})=U_\beta(t_0)\le U_\beta(\bar t)\\
				&\overset{\eqref{eq:contradiction ass}}{\le} (N-2)^{\beta \frac{N-1}{N-2}}(\theta\sigma_N)^{\frac{\beta}{N-2}}\Cap(K)^{\frac{N-2-\beta}{N-2}},
			\end{split}
		\end{equation}
		where in the first inequality  we used the monotonicity of $U_\beta$.
		
		For convenience of notation, for every $t \in(0,1)$ we set
		\[
		u_t \coloneqq t^{-1}u, \quad K_t\coloneqq \X \setminus\{u< t\}.
		\]
		Observe that  $u_t$ is the electrostatic potential for $K_t$. Since the map $(0,1)\ni t\mapsto \int |\d u| \d \pert$ is a.e.-equal to a constant (see \cite[Proposition 5.7]{GigliViolo23}),  by Proposition \ref{prop:electro cap formula} we have
		\begin{align*}
			\Cap(K_t)&=\int_{\{u<t\}} |\d u_t|^2\d \mea=\frac1{t^2}\int_0^t \int |\d u|\d \Per(\{u<s\})\d s\\
			&=\frac1t\int_{\{u<1\}} |\d u|^2\d \mea=\frac1t \Cap(K), \quad \text{ for all $t \in (0,1)$.}
		\end{align*}
		Therefore
		\begin{equation}\label{eq:Urewritten}
			\begin{split}
				U_\beta(t)&=\frac{1}{t^{\beta\frac{N-1}{N-2}}} \int |\d u|^{\beta+1}\, \d \pert= \frac{1}{t^{\beta\frac{N-1}{N-2}-(\beta+1)}} \int |\d u_t|^{\beta+1}\, \d \pert=\\ &=\left(\frac{\Cap(K_t)}{\Cap(K)}\right)^{-\alpha} \int |\d u_t|^{\beta+1}\, \d \pert, \quad \text{ for a.e.\ $t \in (0,1)$,}
			\end{split}
		\end{equation}
		where $\alpha\coloneqq -(\beta\frac{N-1}{N-2}-(\beta+1))=\frac{N-2-\beta}{N-2}.$
		
		Moreover from  H\"older's inequality with exponents $p=\frac{\beta+2}{2},$ $p'=\frac{\beta+2}{\beta}$  we get
		\begin{align*}
			\Cap(K_t)&=\int |\d u_t|\d \pert=\int |\d u_t|^{1+\frac{\beta}{\beta+2}}|\d u_t|^{-\frac{\beta}{\beta+2}}\d \pert\\
			&\le \left(\int |\d u_t|^{\beta+1}\d \pert \right)^{2/(\beta+2)} \left(\int \frac{1}{|\d u_t|}\d \pert \right)^{\beta /(\beta+2)},
		\end{align*}
		for a.e.\ $t\in(0,1)$ such that also $\int \frac{1}{|\d u_t|}\d \pert <+\infty$ (recall that almost every $t$ has this property). The above can be rewritten as
		\begin{equation}\label{eq:holder magic}
			\left(\int |\d u_t|^{\beta+1}\d \pert \right)^{2/\beta}\ge \Cap(K_t)^{\beta+2/\beta}\left(\int \frac{1}{|\d u_t|}\d \pert \right)^{-1}.
		\end{equation}
		Independently, from the monotonicity of $U_\beta$  coupled with the expression of $U_\beta$ given in \eqref{eq:Urewritten} we have
		\[
		(\Cap(K_{t_0}))^{-\alpha}\int |\d u_{t_0}|^{\beta+1} \d \Per(\{u<t_0\})\ge (\Cap(K_t))^{-\alpha}\int |\d u_{t}|^{\beta+1} \d \Per(\{u<t\}), 
		\]
		for a.e.\ $t <t_0.$
		Raising to the power $2/\beta$ the above inequality and combining it with \eqref{eq:holder magic} we reach
		\[
		(\Cap(K_{t_0}))^{-2\alpha/\beta } \left(\int |\d u_{t_0}|^{\beta+1}\d \Per(\{u<t_0\})\right)^{2/\beta}\ge \Cap(K_t)^{\frac{N}{N-2}}\left(\int \frac{1}{|\d u_t|}\d \pert \right)^{-1},
		\]
		for a.e.\ $t <t_0,$ where we have used that $-2\alpha/\beta +(\beta+2)/\beta=\frac{N}{N-2}$. Plugging in \eqref{eq:contr assumption 2} and simplifying, recalling the value of $\alpha,$ we obtain
		\[
		(N-2)^{2\frac{N-1}{N-2}}(\theta\sigma_N)^{\frac{2}{N-2}}\int \frac{1}{|\d u|}\d \pert \ge\frac1t\Cap(K_t)^{\frac{N}{N-2}}=(\Cap(K))^{\frac{N}{N-2}}t^{-\frac{N}{N-2}-1},
		\]
		for a.e.\ $t<t_0.$
		Integrating between $(\tau,t_0)$ we get
		\[
		(N-2)^{2\frac{N-1}{N-2}}(\theta\sigma_N)^{\frac{2}{N-2}}\mea(\{|\d u|>0\}\cap \{u>\tau\}\}) \ge\left(\frac{N-2}{N}\right)(\Cap(K_{\tau})^{\frac{N}{N-2}}-\Cap(K_{t_0})^{\frac{N}{N-2}}).
		\]
		Applying the sharp isocapacitary inequality \eqref{eq:isocap} 
		\[
		(N-2)^{2\frac{N-1}{N-2}}(\theta\sigma_N)^{\frac{2}{N-2}}\mea(K_\tau)\ge \left(\frac{N-2}{N}\right)((N(N-2))^{\frac{N}{N-2}}(\avr(\X)\omega_N)^{\frac2{N-2}}\mea(K_\tau)-\Cap(K_{t_0})^{\frac{N}{N-2}}).
		\]
		Dividing by $\mea(K_\tau)$ on both sides and sending $\tau \to 0^+$, since $\mm(K_\tau)\to +\infty,$ we finally reach
		\[
		(N-2)^{2\frac{N-1}{N-2}}(\theta\sigma_N)^{\frac{2}{N-2}}\ge (N-2)^{2\frac{N-1}{N-2}}(\avr(\X)\sigma_N)^{\frac2{N-2}},
		\]
		which  contradicts the assumption that $\theta<\avr(\X).$
	\end{proof}

	\begin{remark}
		A stronger statement than \eqref{eq:lower bound U} actually holds, meaning that the right-hand side of \eqref{eq:lower bound U} actually coincides  with $\lim_{t \to 0^+} U_\beta(t)$ (from which the inequality follows by monotonicity). The proof of this fact is much more involved than the one for \eqref{eq:lower bound U} and requires careful integral-asymptotic expansion for both $u$ and $|\d u|$. In the smooth setting this was done in \cite{AgostinianiFogagnoloMazzieri20} and later extended to the $\rcd$-setting in \cite[Theorem 3.7.9]{thesis}. Our proof of \eqref{eq:lower bound U} was more direct without using the expression for  $\lim_{t \to 0^+} U_\beta(t)$. This argument  first appeared in \cite{benatti2024minkowski}, where it is used in the  smooth setting to prove an analogous inequality related to the $p$-electrostatic potential.
	\end{remark}
    We are now ready to prove the sharp Willmore inequality stated in the introduction.
	\begin{proof}[Proof of Theorem \ref{thm:sharp willmore}]
    Set $p\coloneqq \beta+1.$
		By Lemma  \ref{lem:w12} $\int |\d u|^{\beta+1} \d\pert$ has a continuous representative in $(0,1)$ that we denote by $R_{|\d u|^{\beta+1}}$. Moreover by Lemma \ref{lem:bv} the function $\int H(u)|\d u|^{\beta} \d \pert$ is in $\BV_\loc(0,1)$ and so has a (pointwise) minimal upper semicontinuous representative that we denote by $R_{H|\d u|^{\beta}}$ (i.e.\ the one obtained by choosing the larger value in the jump points). From Theorem \ref{thm:monotonicity} we know that $U_\beta'\in BV_\loc(0,1)$ and 
		\[
		U_\beta'(t)=\beta t^{-\frac{\beta(N-1)}{N-2}}\left[-R_{H|\d u|^{\beta}} -\frac{N-1}{N-2} \frac1t  R_{|\d u|^{\beta+1}} \right], \quad a.e.\ t\in(0,1).
		\]
		By our choice of $R_{H|\d u|^{\beta}}$ we can then check that the right-hand side must coincide with the maximal lower semicontinuous representative of $U_\beta'$. Hence, again by Theorem \ref{thm:monotonicity},
		\begin{equation}\label{eq:trick monotonicity}
			-R_{H|\d u|^{\beta}}(t) \ge \frac{N-1}{N-2} \frac1t  R_{|\d u|^{\beta+1}}+ \frac1\beta t^{\frac{\beta(N-1)}{N-2}-2}  R(t),  \quad \text{for all $t \in (0,1),$}
		\end{equation}
		where $R(t)\coloneqq  C_{\beta,N} \int_{\{u<t\} }u^2 |\d |\d u^\frac{1}{2-N}|^\frac{\beta}{2}|^2\d \mm\ge0 .$
		On the other hand by Young's inequality for all $s>0$ it holds
		\[
		-R_{H|\d u|^{\beta}}-\frac{s^\beta\beta}{\beta+1} R_{|\d u|^{\beta+1}}\le \frac{1}{s^{\beta^2}}\frac1{\beta+1} W_{\beta+1}, \quad \text{a.e.\ in $(0,1).$}
		\]
		Since the left-hand side is lower semicontinuous, by the maximality of $W_{\beta+1}$ we deduce that the inequality holds pointwise in $(0,1).$ Optimizing in $s$ we obtain
		\begin{equation}\label{eq:holder}
			-R_{H|\d u|^{\beta}}\le (W_{\beta+1})^\frac1{\beta+1} (R_{|\d u|^{\beta+1}})^\frac{\beta}{\beta+1}, \quad \text{pointwise in $(0,1).$}
		\end{equation}
		Combining \eqref{eq:holder} with \eqref{eq:trick monotonicity} we obtain
        \begin{align*}
           (W_{\beta+1})^\frac1{\beta+1}  \ge  \frac{N-1}{N-2} \frac1t  R_{|\d u|^{\beta+1}}^\frac{1}{\beta+1}+ (R_{|\d u|^{\beta+1}})^\frac{-\beta}{\beta+1}\frac1\beta t^{\frac{\beta(N-1)}{N-2}-2}  R(t)
        \end{align*}
		\begin{equation}\label{eq:improved ineq}
			=\frac{N-1}{N-2}  \left(t^{\frac{\beta}{N-2}-1}U_\beta(t)\right)^\frac{1}{\beta+1} +  (R_{|\d u|^{\beta+1}})^\frac{-\beta}{\beta+1}\frac1\beta t^{\frac{\beta(N-1)}{N-2}-2}  R(t)
		\end{equation}
		Since $R(t)\ge 0$ and recalling the lower bound for $U_\beta$  given in \eqref{eq:lower bound U}, we obtain \eqref{eq:willmore ineq intro}.

        Suppose now that equality holds in $\eqref{eq:willmore ineq intro}$ for some $t=t_0$. Then from \eqref{eq:improved ineq} we obtain $R(t_0)=0$ which implies that $ |\d |\d v|^\frac{\beta}{2}|=0$ in $\{u<t_0\}$, where $v\coloneqq u^\frac{1}{2-N}.$ From this we conclude exactly as in \cite[Theorem 7.1]{GigliViolo23}.
	\end{proof}

	In the following stability result we show that almost equality in the Willmore inequality implies that the space is almost a cone.
	\begin{theorem}[Almost rigidity of the Willmore-type inequality]\label{thm:almost rigidity}
For all positive constants $\eps, C,\theta,\rho,\eps_0$, all $N>2$ and $\beta \in (\frac{N-2}{N-1},1]$ there exists $\delta=\delta(\eps,C,\rho,\theta,N,\beta,\eps_0)>0$ such that the following holds. 
		Let $\X$ be an $\rcd(0,N)$ space with $\avr(\X)\ge \theta>0$ and suppose $u$ is an electrostatic potential for a compact set $K\subset \X$. Suppose that $B_\rho(x_0)\subset K$, $\mm(B_1(x_0))\le C,\diam(K)\le C,\Cap(K)\le C$ and $\||\d u|\|_{L^\infty(\{u<t_0\})}\le C$ for some $t_0\in (\eps_0,1).$

        Suppose finally that
        \begin{equation}\label{eq:wil reverse}
       \frac{W_{\beta+1}^\frac{1}{\beta+1}(t_0)}{N-1}  \le (\avr(\X)\sigma_N)^{\frac{\beta}{(\beta+1)(N-2)}}\left(\frac1{t_0}\frac{{\rm Cap}(K)}{N-2}\right)^{\frac{N-2-\beta}{(\beta+1)(N-2)}}   + \delta.
        \end{equation}
		Then 
        \[
        \sfd_{pmGH}((\X,\sfd,\mm,x_0),(\X',\sfd',\mm',x'))<\eps,
        \]
        where $(\X',\sfd',\mm',x')$ is a pointed $\RCD(0,N)$ space which is a truncated cone outside a compact set $K'\subset B_{R_0}(x')$ with $R_0$ depending only on $\rho,N,\theta,\beta,\eps_0$ and $C$ (but not $\eps$).
	\end{theorem}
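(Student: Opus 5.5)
We argue by contradiction and compactness, using the quantitative form of the Willmore inequality obtained in the proof of Theorem \ref{thm:sharp willmore}. Suppose the statement fails for some $\eps,C,\rho,\theta,N,\beta,\eps_0$. Then there are $\RCD(0,N)$ spaces $\X_n$ with $\avr(\X_n)\ge\theta$, compact sets $K_n\supset B_\rho(x_n)$, electrostatic potentials $u_n$, and times $t_n\in(\eps_0,1)$ satisfying all the hypotheses with $\delta=1/n$ in \eqref{eq:wil reverse}, yet staying $\eps$-far from every admissible truncated cone. By Gromov precompactness of $\RCD(0,N)$ spaces (using $\mm_n(B_1(x_n))\le C$ together with the lower bound on the volume coming from $\avr\ge\theta$ and Bishop--Gromov), we pass to a subsequence converging in pmGH to a pointed $\RCD(0,N)$ space $(\X_\infty,\sfd_\infty,\mm_\infty,x_\infty)$, with $t_n\to t_\infty\in[\eps_0,1]$.

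\textbf{Key estimate.} We rewrite \eqref{eq:improved ineq} at $t=t_n$. Together with \eqref{eq:lower bound U}, its first term is bounded below by the right-hand side of \eqref{eq:wil reverse}. Almost equality then forces the remainder to vanish:
\[
(R_{|\d u_n|^{\beta+1}})^{-\frac{\beta}{\beta+1}}\,t_n^{\frac{\beta(N-1)}{N-2}-2}\,C_{\beta,N}\int_{\{u_n<t_n\}}u_n^2\,\big|\d |\d v_n|^{\beta/2}\big|^2\d\mm_n\to0,\qquad v_n=u_n^{\frac1{2-N}}.
\]
The prefactor is controlled as follows. The bound $R_{|\d u_n|^{\beta+1}}(t_n)\le C^{\beta}\int|\d u_n|\d\Per(\{u_n<t_n\})=C^\beta\Cap(K_n)\le C^{\beta+1}$ uses $\||\d u_n|\|_\infty\le C$ and the a.e.\ constancy of the flux. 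Moreover $t_n\ge\eps_0$. Since $\beta>\frac{N-2}{N-1}$ gives $C_{\beta,N}>0$, we obtain
\[
\int_{\{u_n<t_n\}}u_n^2\,\big|\d|\d v_n|^{\beta/2}\big|^2\d\mm_n\to0 .
\]

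\textbf{Passing to the limit.} The sets $K_n$ have diameter $\le C$ and contain $B_\rho(x_n)$, so they are uniformly bounded. Since $\Cap(K_n)\le C$ and $\Cap(K_n)\ge c(\rho,\theta,N)>0$ by Theorem \ref{thm:isocap}, the potentials $u_n$ have two-sided bounds of Green-function type $u_n\sim \sfd^{2-N}$ far away. These bounds are uniform thanks to Li--Yau/Green function estimates on $\RCD(0,N)$ spaces with $\avr\ge\theta$. Consequently $\{u_n<t_n\}\supset\X_n\setminus B_{R_0}(x_n)$ with $R_0=R_0(\rho,N,\theta,\beta,\eps_0,C)$. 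Harmonic functions on compact regions are locally uniformly Lipschitz (gradient bounded by $C$), so $u_n\to u_\infty$ locally uniformly and in $W^{1,2}_{\loc}$ outside $B_{R_0}$, where $u_\infty$ is harmonic and vanishes at infinity. By lower semicontinuity of $W^{1,2}$ energy under pmGH convergence, applied to $|\d v_n|^{\beta/2}$ with $u_n^2$ bounded below on compact subsets, we get $|\d|\d v_\infty|^{\beta/2}|=0$ on $\X_\infty\setminus \bar B_{R_0}$. Here one needs that $|\d v_n|\to|\d v_\infty|$ in $L^2_{\loc}$. This follows from $W^{1,2}$-strong convergence of harmonic functions on compact sets, together with the uniform $W^{1,2}_{\loc}$ bound on $|\d v_n|^{\beta/2}$ from Theorem \ref{cor:harmest2}. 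Thus $|\d v_\infty|$ is locally constant outside a compact set, and the rigidity argument of \cite[Theorem 7.1]{GigliViolo23}, as already invoked at the end of the proof of Theorem \ref{thm:sharp willmore}, shows that $\X_\infty$ is a truncated $N$-Euclidean cone outside $K'\subset B_{R_0}(x_\infty)$. This contradicts the assumption that $\X_n$ stays $\eps$-far from such spaces.

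\textbf{Main obstacle.} The hardest step is the stability of the convergence $u_n\to u_\infty$, specifically its behaviour at infinity. Two things must be checked. First, $u_\infty$ is genuinely a nontrivial function vanishing at infinity, so that $v_\infty$ is defined and the level set $\{u_\infty<t_\infty\}$ is an unbounded exterior region. Second, the rigidity argument of \cite{GigliViolo23} can be run locally on this exterior region rather than globally. For the first point I would use uniform Green function bounds, which rely on $\avr\ge\theta$ and $\Cap(K_n)\ge c$. For the second, I would reuse the local version of the cone rigidity, which requires $|\d v_\infty|$ constant on the unbounded component $\{u_\infty<t_\infty\}$. A secondary technical point is the energy lower semicontinuity with the weight $u_n^2$, which is handled by restricting to compact annuli where $u_n$ is bounded below.
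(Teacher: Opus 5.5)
Your proposal is correct in outline, but after the first step it takes a genuinely different route from the paper.

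\textbf{Common first step.} The key estimate is the same as in the paper. From \eqref{eq:wil reverse}, \eqref{eq:improved ineq} and \eqref{eq:lower bound U} one gets $R(t_0)\lesssim\delta\,(R_{|\d u|^{\beta+1}}(t_0))^{\frac{\beta}{\beta+1}}$. Moreover $R_{|\d u|^{\beta+1}}(t_0)\le C^\beta\Cap(K)$; this holds for a.e.\ $t<t_0$ and passes to $t_0$ by continuity of $R_{|\d u|^{\beta+1}}$, a point you should state.

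\textbf{Where the routes diverge.} The paper does no compactness argument. It applies directly the quantitative almost-rigidity theorem \cite[Theorem 6.7]{GigliViolo23} to $v=u^{\frac{2}{2-N}}$. The only remaining work is to check its hypotheses: $\||\d\sqrt v|\|_{L^\infty(\{u<t_0\})}\le L$ and $v\ge 1+\eps$ outside $B_{R_0}(x_0)$, with $L,R_0$ depending only on the data. These follow from the two-sided decay $u\simeq\sfd(x_0,\cdot)^{2-N}$ of \cite[Proposition 5.2]{GigliViolo23}, the maximum principle near $K$, and the Cheng--Yau gradient estimate far from $K$. You instead re-derive that almost-rigidity statement in this special case, by contradiction, pmGH-compactness and the qualitative rigidity \cite[Theorem 7.1]{GigliViolo23}.

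\textbf{What each route costs.} The paper's route is much shorter and makes explicit which uniform controls are needed. Yours avoids the quantitative theorem but relies on several pmGH stability facts:
\begin{enumerate}
\item precompactness, which you correctly get from $\theta\omega_N\le\mm_n(B_1(x_n))\le C$;
\item harmonicity of the limit and strong $W^{1,2}_{\loc}$ convergence of uniformly bounded harmonic functions;
\item lower semicontinuity of localized Cheeger energies.
\end{enumerate}
It also forces you to actually carry out the point you flag. To invoke Theorem 7.1 you need $|\d|\d v_\infty|^{\beta/2}|=0$ on a whole sublevel set $\{u_\infty<s\}$. You can get this on $\{u_\infty<t_\infty\}$, since any ball compactly inside it is eventually inside $\{u_n<t_n\}$, or on a smaller sublevel set contained in $\X_\infty\setminus\bar B_{R_0}$. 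You also need $u_\infty/s$ to be the electrostatic potential of the compact set $\{u_\infty\ge s\}$. This follows from continuity of $u_\infty$, stability of harmonicity, and the uniform two-sided decay bounds. Those same bounds let you fix the radius of the admissible class before extracting the contradicting sequence, as the contradiction requires.

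\textbf{Minor correction.} The uniform $W^{1,2}$ bound on $|\d v_n|^{\beta/2}$ on annuli comes from the vanishing weighted integral itself ($u_n$ is bounded below there) together with the Lipschitz bound. It does not come from Theorem \ref{cor:harmest2}, which is only qualitative.
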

	\begin{proof}
	    We can assume $\eps<1.$ Combining \eqref{eq:wil reverse} with \eqref{eq:improved ineq} we have
        \begin{equation}
            R(t_0)\le (N-1)\delta (R_{|\d u|^{\beta+1}}(t_0))^\frac{\beta}{\beta+1}\beta t^{2-\frac{\beta(N-1)}{N-2}}\lesssim_{\eps_0,N} \delta  (R_{|\d u|^{\beta+1}}(t_0))^\frac{\beta}{\beta+1},
        \end{equation}
        since $\beta \le 1$ and $t_0\in(\eps_0,1).$ To get an upper bound for $R_{|\d u|^{\beta+1}}$ we write
        \begin{align*}
              R_{|\d u|^{\beta+1}}(t)&=\int |\d u|^{\beta+1}\,\d \pert \le C^\beta \int |\d u|  \,\d \pert =C^\beta \Cap(K),
        \end{align*}
        for a.e.\ $t\in(0,t_0)$. In the last equality we used the fact that $\int |\d u| \pert$ is a.e.\ equal to a constant (see e.g.\ \cite[pag.\ 37]{GigliViolo23}) combined with Proposition \ref{prop:electro cap formula}.  Since $R_{|\d u|^{\beta+1}}$ and rightmost term are both continuous, the final bound holds in fact also at $t=t_0.$ Recalling the expression for $R(t)$ we obtain
        \[
           \int_{\{u<t_0\} }v^{2-N} |\d |\d \sqrt v|^\frac{\beta}{2}|^2\d \mm\le \delta C^\frac{\beta^2}{\beta+1} \Cap(K)^\frac{\beta}{\beta+1}\lesssim_{C,N,\beta}\delta,
        \]
        where $v\coloneqq u^\frac{2}{2-N}.$ Hence the result follows by applying \cite[Theorem 6.7]{GigliViolo23}, once we check that  \sloppy $\||\d \sqrt v|\|_{L^\infty(\{u<t_0\})}\le L$ and $v\ge 1+\eps$ in $\X \setminus B_{R_0}(x_0)$, where $L$ and $R_0$ are constants depending only on $\rho,N,\theta$ and $C$ (but not $\eps$). To this aim we can apply decay bounds for $u$ in \cite[Proposition 5.2]{GigliViolo23} to obtain that
        \begin{equation}\label{eq:decay u}
           c^{-1}\sfd(x_0,x)^{2-N}\le  u(x)\le c \sfd(x_0,x)^{2-N}, \quad \text{for all $x\in \X\setminus B_{r_0}(x_0)$},
        \end{equation}
        where $c\ge 1$ and $r_0$ depend only on $\rho,N,\theta$ and $C.$ We can also assume that $r_0>C$ so that $K\subset B_{r_0}(x_0)$. The second bound in \eqref{eq:decay u} gives  that $v\ge 2\ge 1+\eps$ in $\X \setminus B_{R_0}(x_0)$  choosing $R_0$ sufficiently big depending only on $r_0,c$ and $N.$  To show the bound  on $|\d \sqrt v|$ we use the assumption $\||\d u|\|_{L^\infty(\{u<t_0\})}\le C$  and the fact that $u\ge \min(c^{-1}r_0^{2-N},1)$ in $B_{r_0}(x_0)\setminus K$,  by  the first in \eqref{eq:decay u} and the maximum principle, to obtain
        \[
        \||\d \sqrt v|\|_{L^\infty(\{u<t_0\}\cap B_{r_0}(x_0))}\le\frac{1}{N-2} {C}{\min(c^{-1}r_0^{2-N},1)^{\frac{1-N}{N-2}}}.
        \]
       On the other hand combining the first inequality in \eqref{eq:decay u} with the gradient estimate for harmonic functions (see \cite[Theorem 1.1]{Jiang13}) we have
       \[
          |\d \sqrt v|(x)\le \frac{C(N)}{\sfd(x,\partial K)u(x)^\frac{1}{N-2}}\le  \frac{C(N)c^\frac{1}{N-2}\sfd(x,x_0)}{\sfd(x,\partial K)} \le  \frac{C(N)c^\frac{1}{N-2}\sfd(x,x_0)}{\sfd(x,x_0)-C}\le  \frac{C(N)c^\frac{1}{N-2}r_0}{r_0-C}
       \]
       $\mm$-a.e.\ in $\{u<t_0\}\setminus B_{r_0}(x_0).$  Combining the last two inequalities we obtain the desired upper bound for $\||\d \sqrt v|\|_{L^\infty(\{u<t_0\})}$ and conclude the proof.
	\end{proof}
	Let us stress that the assumption $\||\d u|\|_{L^\infty(\{u<t_0\})}\le C$ in the above statement  is not too restrictive. For example if $K$ is the closure of an open set with smooth boundary in a smooth Riemannian manifold, then $|\d u|$ is bounded even in the whole $\X\setminus K$. In the case of non-smooth spaces see instead the discussion before Proposition 7.5 in  \cite{GigliViolo23}.

\medskip

	\textbf{{Acknowledgments.}}  The second author is funded by the European Union (ERC, ConFine,
101078057).  The present work originated during the writing of the PhD thesis of the second author \cite{thesis}.

	\def\cprime{$'$} \def\cprime{$'$}

\end{document}